\newtheorem{thm}{Theorem}[section]
\newtheorem*{thm*}{Theorem}
\newtheorem*{cor*}{Corollary}
\newtheorem*{prop*}{Proposition}
\newtheorem{prop}[thm]{Proposition}
\newtheorem{lem}[thm]{Lemma}
\theoremstyle{definition}
\newtheorem{defn}[thm]{Definition}
\newtheorem{con}[thm]{Construction}
\newtheorem*{conv*}{Convention}
\newtheorem{exmp}[thm]{Example}
\newtheorem*{notn*}{Notation}
\theoremstyle{remark}
\newtheorem{rem}[thm]{Remark}
\newtheorem*{idea*}{Idea}
\let\c@equation\c@thm
\numberwithin{thm}{section}
\numberwithin{equation}{section}
\title[Moduli Spaces of Filtered G-local Systems on Curves]{Moduli Spaces of Filtered G-local Systems on Curves}
\author{Pengfei Huang and Hao Sun}
\begin{document}
\pagenumbering{arabic}
\maketitle
\begin{abstract}
In this paper, we construct the moduli spaces of filtered $G$-local systems on curves for an arbitrary reductive group $G$ over an algebraically closed field of characteristic zero. This provides an algebraic construction for the Betti moduli spaces in the tame nonabelian Hodge correspondence for vector bundles/principal bundles on noncompact curves. As a direct application, the tame nonabelian Hodge correspondence on noncompact curves holds not only for the relevant categories, but also for the moduli spaces.
\end{abstract}
	
\flushbottom
	
%\tableofcontents
%\newpage
	
\renewcommand{\thefootnote}{\fnsymbol{footnote}}
\footnotetext[1]{Key words: filtered local system, quiver representation, Betti moduli space, nonabelian Hodge correspondence}
\footnotetext[2]{MSC2020: 14D22, 16G20}
	
\section{Introduction}

\subsection{Background}
The celebrated nonabelian Hodge correspondence (NAHC) on a smooth projective variety over $\mathbb{C}$ establishes a connection among three types of objects on the variety: Higgs bundles, $D_X$-modules, and local systems \cite{Sim92,Sim94a,Sim94b}. This correspondence holds not only for the relevant categories, but also for the moduli spaces. For the noncompact case, by introducing extra structures that capture the asymptotic behaviour of local sections near the divisors, these three types of objects were generalized by Simpson into: filtered regular Higgs bundles, filtered regular $D_X$-modules and filtered local systems respectively, and thus the tame NAHC for vector bundles on noncompact curves was established with the help of tame harmonic bundles \cite{Sim90}. This correspondence is only given as an equivalence of the corresponding categories due to the lack of constructions of the corresponding moduli spaces. By now, the moduli spaces of filtered Higgs bundles (Dolbeault moduli spaces) and filtered $D_X$-modules (de Rham moduli spaces) have been constructed in numerous references (see \cite{Alf17,BY96,Ina13,Kon93,Sun20,Yok93}, for instance). However, a construction of the moduli spaces of filtered local systems (Betti moduli spaces) is still missing (see \cite[\S 1]{Sim97}). Moreover, since the stability of filtered local systems is not equivalent to the irreducibility of the corresponding representations of the fundamental group of the noncompact curve, the results in \cite{Rich88} cannot be applied directly to this situation. Therefore, we need a new approach to construct the Betti moduli spaces in the noncompact case. On the other hand, when dealing with principal bundles, Simpson's filtered structures (also called parabolic structures) are not enough to produce a satisfactory complete correspondence. This obstacle was overcome recently by the authors of \cite{HKSZ22} via parahoric torsors under the language of parahoric Bruhat--Tits group schemes, where a complete tame NAHC for principal bundles on noncompact curves was established for categories and claimed to be true for moduli spaces because they did not construct the Betti moduli space (moduli space of filtered $G$-local systems) directly.

The goal of this paper is to construct the Betti moduli spaces involved in the tame NAHC for vector/principal bundles on noncompact curves. As a direct application, the tame NAHC on noncompact curves holds for moduli spaces \cite{Sim90,HKSZ22}.

\subsection{Main Results}

In this paper, we always assume that $k$ is an algebraically closed field of characteristic zero, and $X$ is a connected smooth projective algebraic curve with genus $g$ over $k$. Let $\boldsymbol{D}$ be a set of distinct points in $X$. Denote by $X_{\boldsymbol{D}}:= X \backslash \boldsymbol{D}$ the noncompact curve.

In \S\ref{sect_fls}, we construct the moduli space of degree zero filtered local systems. It is well-known that representations of the fundamental group $\pi_1(X)$ can be regarded as representations of some special quiver. Therefore, local systems on $X$ are related to quiver representations (see \S\ref{subsect_rep_fund_grp}). For the noncompact case, we are considering filtered local systems on $X_{\boldsymbol{D}}$, which are local systems on $X_{\boldsymbol{D}}$ equipped with parabolic structures for each $x \in \boldsymbol{D}$ (Definition \ref{defn_fls}). Roughly speaking, a parabolic structure is a weighted filtration, and the type of filtrations is a parabolic subgroup regarded as the automorphism group of the filtration and the weights are rational numbers. Denote by $\boldsymbol{P}$ (resp. $\boldsymbol\theta$) a set of parabolic subgroups (resp. weights) indexed by $\boldsymbol{D}$. Although there are many methods to construct a quiver such that the quiver representations correspond to fundamental group representations, the key point is that the correspondence should also preserve the stability conditions of Definition \ref{defn_stab_num} and Definition \ref{defn_fls_stab}. In Construction \ref{cons_new_quiv}, we construct a quiver $\mathscr{Q}^{\boldsymbol{D}}$ such that the orbits in the (quiver) representation space $\mathscr{R}(\mathscr{Q}^{\boldsymbol{D}},\mathscr{I}^{\boldsymbol{D}},[\boldsymbol{P}])$ are in one-to-one correspondence with the isomorphism classes of filtered local systems of type $[\boldsymbol{P}]$ (Proposition \ref{prop_para_and_orbit}). Inspired by King's work \cite{King94}, with a good choice of the character $\chi_{\boldsymbol\theta}$ (Construction \ref{cons_char_theta}), we prove that the stability of filtered local systems of type $[\boldsymbol{P}]$ is equivalent to the $\chi_{\boldsymbol\theta}$-stability of the corresponding point in $\mathscr{R}(\mathscr{Q}^{\boldsymbol{D}},\mathscr{I}^{\boldsymbol{D}},[\boldsymbol{P}])$ (Proposition \ref{prop_stab_local_sys_and_quiv_rep}). Finally, applying the geometric invariant theory, we construct the moduli space of degree zero filtered local systems.
\begin{thm}[Theorem \ref{thm_tame_Betti_lc}]\label{thm_tame_Betti_lc_intro}
There exists a quasi-projective variety $\mathcal{M}_{\rm B}(X_{\boldsymbol{D}},[\boldsymbol{P}],\boldsymbol\theta)$ as the moduli space of degree zero semistable filtered local systems of type $[\boldsymbol{P}]$ with weights $\boldsymbol\theta$. There exists an open subset $\mathcal{M}^s_{\rm B}(X_{\boldsymbol{D}},[\boldsymbol{P}],\boldsymbol\theta)$, whose points correspond to isomorphism classes of degree zero stable filtered local systems of type $[\boldsymbol{P}]$ with weights $\boldsymbol\theta$.
\end{thm}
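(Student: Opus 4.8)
The plan is to reduce the construction of the Betti moduli space to King's GIT framework for quiver representations, using the machinery the paper has already assembled. By Construction~\ref{cons_new_quiv} and Proposition~\ref{prop_para_and_orbit}, isomorphism classes of degree zero filtered local systems of type $[\boldsymbol{P}]$ correspond bijectively to orbits in the representation space $\mathscr{R}(\mathscr{Q}^{\boldsymbol{D}},\mathscr{I}^{\boldsymbol{D}},[\boldsymbol{P}])$ under the action of the relevant product of reductive groups; the relations $\mathscr{I}^{\boldsymbol{D}}$ (encoding the surface relation of $\pi_1$ together with the parabolic-type constraints at each puncture) cut out a closed, hence affine or quasi-affine, subscheme of the affine representation scheme. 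Thus I would first exhibit $\mathscr{R}(\mathscr{Q}^{\boldsymbol{D}},\mathscr{I}^{\boldsymbol{D}},[\boldsymbol{P}])$ as a quasi-affine scheme carrying a linearized action of a reductive group $\mathbf{G}$ with the distinguished character $\chi_{\boldsymbol\theta}$ furnished by Construction~\ref{cons_char_theta}.

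Next I would invoke Proposition~\ref{prop_stab_local_sys_and_quiv_rep}, which identifies stability of the filtered local system with $\chi_{\boldsymbol\theta}$-stability of the corresponding point, so that the GIT-(semi)stable locus coincides precisely with the locus of (semi)stable filtered local systems. Applying King's theorem on moduli of quiver representations, the GIT quotient
\[
\mathcal{M}_{\rm B}(X_{\boldsymbol{D}},[\boldsymbol{P}],\boldsymbol\theta) := \mathscr{R}(\mathscr{Q}^{\boldsymbol{D}},\mathscr{I}^{\boldsymbol{D}},[\boldsymbol{P}])^{\chi_{\boldsymbol\theta}\text{-}ss} /\!\!/ \mathbf{G}
\]
is a quasi-projective variety, and the stable locus descends to a geometric quotient giving the open subset $\mathcal{M}^s_{\rm B}$ whose points are genuine isomorphism classes of stable objects. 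The properties claimed in the statement (quasi-projectivity, openness of the stable locus, the orbit/point dictionary) then follow from the standard output of GIT once the dictionary above is in place.

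The main obstacle, and the step I expect to demand the most care, is verifying that King's setup applies verbatim despite two features peculiar to this situation. First, one must confirm that the $\chi_{\boldsymbol\theta}$-(semi)stability appearing in King's criterion matches the slope-type stability of Definition~\ref{defn_fls_stab} after accounting for the rational weights $\boldsymbol\theta$ and the parabolic type $[\boldsymbol{P}]$; since the weights are rational rather than integral, I would clear denominators to obtain an integral character and check that the comparison of stability conditions is unaffected, which is precisely what Proposition~\ref{prop_stab_local_sys_and_quiv_rep} is meant to guarantee. Second, because the group $\mathbf{G}$ acting includes the parabolic blocks encoding the filtration type and the representation space is only quasi-affine (owing to the invertibility constraints on the generators of $\pi_1(X_{\boldsymbol{D}})$), I would check that the relevant linearization still yields a projective-over-affine quotient so that quasi-projectivity of $\mathcal{M}_{\rm B}$ is retained.

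Once these compatibilities are established, the remaining verifications---that the quotient corepresents the moduli functor of semistable objects and that closed orbits in the semistable locus correspond to polystable filtered local systems---are formal consequences of GIT, so I would treat them briefly rather than in detail.
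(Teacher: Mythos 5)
Your outline coincides, step for step, with the paper's actual proof of Theorem \ref{thm_tame_Betti_lc}: the moduli space is defined as the GIT quotient $\mathscr{R}(\mathscr{Q}^{\boldsymbol{D}},\mathscr{I}^{\boldsymbol{D}},[\boldsymbol{P}]) /\!\!/ (G_{\boldsymbol{P}},\chi_{\boldsymbol\theta})$, quasi-projectivity follows from quasi-projectivity of the representation space, and the dictionary between $\chi_{\boldsymbol\theta}$-(semi)stable points and (semi)stable filtered local systems is exactly Propositions \ref{prop_para_and_orbit} and \ref{prop_stab_local_sys_and_quiv_rep}. Your two flagged concerns are also resolved the way you suggest: denominators of the rational weights are cleared in Construction \ref{cons_char_theta}, and the remaining GIT formalities are handled by the standard arguments following Simpson.

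There is, however, one substantive step you dismiss as a ``formal consequence of GIT'' that the paper cannot and does not treat formally: the identification of GIT equivalence (closed orbits in the semistable locus) with $S$-equivalence of filtered local systems, which is what justifies the points-of-the-quotient description in part (3) of Theorem \ref{thm_tame_Betti_lc}. This is the content of Lemma \ref{lem_S_equiv_and_GIT_equiv}, whose proof is a genuine computation: one shows ${\rm gr}(\mathscr{L}_\bullet)\cong{\rm gr}(\mathscr{L}'_\bullet)$ if and only if $\phi\sim\phi'$ using King's Proposition \ref{prop_king2.6}, and the forward direction requires explicitly constructing an element $(g_v)_{v\in\mathscr{Q}^{\boldsymbol{D}}_0}\in G_{\boldsymbol{P}}$ carrying $\lim_{t\to 0}\mu(t)\cdot\phi$ to $\lim_{t\to 0}\mu'(t)\cdot\phi'$. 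The component $g_{v_0}$ comes from the isomorphism of associated gradeds, but the components $g_{v_x}$ at the puncture vertices must be assembled from the conjugators $g_x$ of Lemma \ref{lem_cochar_equal} and the limits $\phi_{c_{x,1}}|_{L}$, and one must verify $g_{v_x}\in L_x$ (using that the normalizer of $L_x$ is itself); in the converse direction one must check that the Jordan--H\"older factors have degree zero, via the pairing formula \eqref{eq_mu_chi_quot} and a positivity normalization of the cocharacter. None of this is automatic, in part because King's Theorem \ref{thm_chi_stab_and_stab} does not apply verbatim in this setting --- the acting group $G_{\boldsymbol{P}}$ has Levi factors $L_x$ rather than full general linear groups at the puncture vertices, and the space is a quasi-projective subvariety cut out by existential conditions --- so the paper must argue directly from the numerical criteria, Propositions \ref{prop_king2.5} and \ref{prop_king2.6}. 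Your proposal becomes complete once you supply this comparison lemma rather than treating it as boilerplate.
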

\noindent We refer the reader to Theorem \ref{thm_tame_Betti_lc} for a precise statement. Since the stability condition of filtered local systems we consider (Definition \ref{defn_fls_stab}) is the one given by Simpson \cite{Sim90}, the moduli space $\mathcal{M}_{\rm B}(X_{\boldsymbol{D}}, [\boldsymbol{P}],\boldsymbol\theta)$ is the Betti moduli space in the tame NAHC for vector bundles on noncompact curves, where the subscript $``{\rm B}"$ is for ``Betti". 

In \S\ref{sect_fgls}, we use a similar approach as we did in \S\ref{sect_fls} to construct the moduli space of degree zero filtered $G$-local systems. The main result is given as follows:
\begin{thm}[Theorem \ref{thm_tame_Betti_lc_G}]
There exists a quasi-projective variety $\mathcal{M}_{\rm B}(X_{\boldsymbol{D}},G,\boldsymbol\theta)$ as the moduli space of degree zero $R$-semistable $\boldsymbol\theta$-filtered $G$-local systems. There exists an open subset $\mathcal{M}^s_{\rm B}(X_{\boldsymbol{D}},G,\boldsymbol\theta)$, whose points correspond to isomorphism classes of degree zero $R$-stable $\boldsymbol\theta$-filtered $G$-local systems.
\end{thm}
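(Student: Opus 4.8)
The plan is to run the same quiver-theoretic machine developed in \S\ref{sect_fls}, replacing the structure group $GL_n$ by the reductive group $G$ throughout. First I would attach to $X_{\boldsymbol{D}}$ the quiver $\mathscr{Q}^{\boldsymbol{D}}$ of Construction \ref{cons_new_quiv}, but now consider its \emph{$G$-valued} representations: instead of assigning linear maps to the arrows encoding the generators of $\pi_1(X_{\boldsymbol{D}})$, I assign elements of $G$ subject to the relations $\mathscr{I}^{\boldsymbol{D}}$, and I encode the filtered structure at each puncture by a reduction of the local monodromy to a parabolic subgroup of $G$ together with the weights $\boldsymbol\theta$. This produces a representation scheme on which the relevant gauge group (a product of copies of $G$ at the vertices, compatible with the parabolic reductions at the punctures) acts by conjugation. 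The first step is then to prove the $G$-analogue of Proposition \ref{prop_para_and_orbit}: that the orbits of this action are in bijection with the isomorphism classes of $\boldsymbol\theta$-filtered $G$-local systems. This is a formal transcription of the $GL_n$ argument and should present no essential difficulty.

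The substantive step is to match the intrinsic Ramanathan-type stability ($R$-stability) with a GIT stability on the representation scheme. Here I would fix a faithful representation $\iota \colon G \hookrightarrow GL(V)$ and pass to the associated filtered local system, which lands in the setting of \S\ref{sect_fls}, so that the equivalence already established in Proposition \ref{prop_stab_local_sys_and_quiv_rep} can be imported. By the Ramanathan-type principle, $R$-(semi)stability of a filtered $G$-local system can be tested on reductions to parabolic subgroups $P \subset G$, each such reduction carrying a parabolic degree computed from $\boldsymbol\theta$ and a character of $P$. Under $\iota$ a reduction to $P$ induces a filtration of the associated filtered local system, and the task is to show that the sign of the parabolic degree over all reductions (the $R$-stability inequality) coincides with the Hilbert--Mumford weight for the $\chi_{\boldsymbol\theta}$-linearization, where $\chi_{\boldsymbol\theta}$ is the $G$-analogue of the character of Construction \ref{cons_char_theta}. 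The dictionary between one-parameter subgroups $\lambda$ of the gauge group and the parabolic subgroups $P(\lambda)$ they determine is what allows these two numerical criteria to be compared.

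Finally, since $G$ is reductive and the acting gauge group is reductive, I would apply geometric invariant theory to the $\chi_{\boldsymbol\theta}$-linearized representation scheme to obtain the quasi-projective quotient $\mathcal{M}_{\rm B}(X_{\boldsymbol{D}}, G, \boldsymbol\theta)$, with the $R$-stable locus forming the open subscheme $\mathcal{M}^s_{\rm B}(X_{\boldsymbol{D}}, G, \boldsymbol\theta)$ whose points are genuine isomorphism classes of $R$-stable objects; this exactly parallels the conclusion of Theorem \ref{thm_tame_Betti_lc}.

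The hard part will be the stability comparison of the second step. The one-parameter subgroups appearing in the Hilbert--Mumford criterion must be matched precisely with reductions of the $G$-local system to parabolic subgroups, and the bookkeeping of the weights $\boldsymbol\theta$ (which enter both the parabolic degree and the linearizing character) must be arranged so that the two numerical criteria agree on the nose, including the contribution of the central torus of $G$ in the degree-zero normalization. Ramanathan's arguments supply the template, but adapting them to the filtered quiver setting and to an arbitrary reductive rather than merely semisimple $G$ is the delicate point.
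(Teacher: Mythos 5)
Your first and third steps coincide with the paper's: the paper's Construction \ref{cons_new_quiv_G} builds the $G$-valued representation scheme $\mathscr{R}(\mathscr{Q}^{\boldsymbol{D}},\mathscr{I}^{\boldsymbol{D}},\boldsymbol\theta)$ with the conditions $\phi_{c_{x,1}} \in L_{\theta_x}$, $\phi_{c_{x,2}} \in P_{-\theta_x}$, Proposition \ref{prop_para_and_orbit_G} is exactly your orbit--isomorphism-class bijection, and the final GIT quotient by $G_{\boldsymbol\theta}$ is Theorem \ref{thm_tame_Betti_lc_G}. The genuine gap is in your second step, the stability comparison via a faithful representation $\iota\colon G \hookrightarrow {\rm GL}(V)$. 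First, $R$-stability does \emph{not} transfer along $\iota$: an $R$-stable $\boldsymbol\theta$-filtered $G$-local system whose monodromy is not Zariski-dense (e.g.\ lands in the normalizer of a torus, or already for $\iota$ the adjoint representation of a classical group, where the associated local system acquires trivial summands) induces a filtered local system that is at best polystable, never stable in the sense of Definition \ref{defn_fls_stab}. So Proposition \ref{prop_stab_local_sys_and_quiv_rep} cannot be ``imported'' to identify the stable locus, which is precisely the content of part (4) of the theorem. Even for semistability, the implication ``$R$-semistable $\Rightarrow$ associated filtered system semistable'' is a Ramanathan--Ramanan-type theorem that is not available in the filtered-local-system setting and would itself require an argument of the same difficulty as the direct one.

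Second, and independently of the stability transfer, the GIT quotient must be taken of the $G$-valued scheme under the action of $G_{\boldsymbol\theta} = G \times \prod_{x \in \boldsymbol{D}} L_{\theta_x}$, linearized by a character of $G_{\boldsymbol\theta}$ that you never actually construct: Hilbert--Mumford (semi)stability does not restrict along the embedding of $G_{\boldsymbol\theta}$ into the ${\rm GL}(V)$-gauge group, since the subgroup has both fewer one-parameter subgroups and a different character lattice, so the numerical criteria for the two actions are genuinely different computations. This is where the paper supplies the ingredient your proposal is missing: Lemma \ref{lem_char_cochar} sets up the intrinsic duality $\mu \mapsto \chi_\mu$ between cocharacters and dominant characters of $P_\mu$ via simple coroots and fundamental weights, the linearizing character is defined directly as $\chi_{\boldsymbol\theta} = \chi_{v_0} \cdot \prod_{x \in \boldsymbol{D}} \chi_{-d\theta_x}$ with no auxiliary representation, and Proposition \ref{prop_stab_local_sys_and_quiv_rep_G} proves the equivalence of $R$-(semi)stability with $\chi_{\boldsymbol\theta}$-(semi)stability by applying Proposition \ref{prop_king2.5} to $G_{\boldsymbol\theta}$ itself, using the identity $\langle \mu, \chi \rangle = \langle \mu_\chi, \chi_\mu \rangle$ to convert between destabilizing cocharacters and anti-dominant characters of compatible parabolics. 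Note also that this duality requires $G$ semisimple (the paper restricts to that case throughout \S\ref{subsect_stab_R}), with the central torus of a general reductive $G$ absorbed by the degree-zero hypothesis rather than by any normalization inside the character; your closing paragraph correctly flags this as delicate but offers no mechanism for it. To repair your proof you would replace the faithful-representation step by this intrinsic cocharacter--character dictionary.
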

\noindent The stability condition for filtered $G$-local systems (Definition \ref{defn_stab_cond_G}) follows from that of principal bundles given by Ramanathan \cite{Ram75,Ram96a,Ram96b}, which is called the \emph{$R$-stability condition} in this paper. When $G={\rm GL}_n(k)$, the $R$-stability condition of filtered local systems is equivalent to the usual slope stability (Definition \ref{defn_fls_stab}). Moreover, the $R$-stability condition is the one the authors used to establish the tame NAHC for principal bundles on noncompact curves \cite{HKSZ22}.

\subsection{Applications and Remarks}

Yamakawa constructed the moduli space of filtered local systems on $\mathbb{P}^1$ when $G={\rm GL}_n(\mathbb{C})$ \cite{Yam08}. Compared to his work, Theorem \ref{thm_tame_Betti_lc_intro} is more general, which gives the existence of the moduli space of degree zero filtered local systems on arbitrary algebraic curves. By a similar technique as Simpson did in \cite{Sim94a,Sim94b}, one can see that the tame NAHC on noncompact curves given in \cite[Theorem, p. 718]{Sim90} also holds at the level of moduli spaces. The same argument holds for the Betti moduli space in the tame NAHC for principal bundles on noncompact curves \cite{HKSZ22} with respect to Theorem \ref{thm_tame_Betti_lc_G}. Since the result for principal bundles contains Simpson's result as a special case, we only give the construction of the Betti moduli spaces for principal bundles appearing in the statement of the tame NAHC for principal bundles in \S\ref{sect_Betti_moduli} (see Theorem \ref{thm_HKSZ1.1} or \cite[Theorem 1.1]{HKSZ22}). 

When we consider the Betti moduli spaces in the tame NAHC on noncompact curves, we fix the data of monodromies/residues at punctures in order to get a more precise description of the one-to-one correspondence. Let $\boldsymbol\gamma$ be a collection of weights indexed by $\boldsymbol{D}$. Fixing a collection of the data of monodromies $M_{\boldsymbol\gamma}$, we construct the Betti moduli space $\mathcal{M}_{\rm B}(X_{\boldsymbol{D}},G,\boldsymbol\gamma, M_{\boldsymbol\gamma})$, whose points correspond to $S$-equivalence classes of degree zero $R$-semistable $\boldsymbol\gamma$-filtered $G$-local systems with given monodromy data $M_{\boldsymbol\gamma}$ (Theorem \ref{thm_Betti_moduli}). 

In this paper, the weights of filtered local systems are always assumed to be rational because we follow King's approach to give the construction of moduli spaces. If we take real weights, we cannot associate it with a well-defined character (see Construction \ref{cons_char_theta} and \S\ref{subsect_stab_R}). However, given a collection of real weights $\boldsymbol\theta$, we can always find a collection of ``nearby" rational weights $\boldsymbol\theta'$ such that the parabolic subgroups are the same $P_{\theta_x} = P_{\theta'_x}$ and the stability conditions for stable $\boldsymbol\theta'$-filtered and stable $\boldsymbol\theta$-filtered $G$-local systems are equivalent. Thus, the stable locus of the moduli space of $\boldsymbol\theta'$-filtered $G$-local systems can be regarded as that of $\boldsymbol\theta$-filtered $G$-local systems (see Remark \ref{rem_real_weight}). This approach of dealing with real weights is also considered in \cite{BBP17} and \cite[\S8]{IS08}.

For the wild case, the authors established the NAHC for principal objects in the unramified case \cite{HS22} and the approach given in this paper can be used to construct the wild Betti moduli space, which will be discussed in a future project for more details.

\vspace{2mm}

\textbf{Acknowledgments}.
The authors would like to thank Andres Fernandez Herrero, Georgios Kydonakis, Carlos Simpson, Tao Su, Szil\'ard Szab\'o, Markus Reineke, and Lutian Zhao for helpful discussions and their interest in this work, especially Georgios Kydonakis for his very helpful comments and suggestions on a very early version of this paper. The authors would like to thank anonymous referees for pointing out a mistake in an early version of this paper and giving insightful comments and constructive suggestions. 
P. Huang would like to express deep gratitude to the Institut des Hautes \'Etudes Scientifiques, and the Max Planck Institute for Mathematics in the Sciences for their kind hospitality and support during the production of this paper.

This project is funded by National Key R$\&$D Program of China (No. 2022YFA1006600), 
the Deutsche Forschungsgemeinschaft (DFG, Projektnummer 547382045), and Guangdong Basic and Applied Basic Research Foundation (2024A1515011583). 

\vspace{2mm}

\section{Preliminaries}\label{sect_prel}
We briefly review the numerical criterion for quiver representations given in \cite{King94} and some properties of quiver representations, of which good references are \cite{ASS06,Rei08}. Finally, in \S\ref{subsect_rep_fund_grp}, we construct a quiver such that fundamental group representations can be regarded as representations of this quiver. Thus, local systems are related to quiver representations.

\subsection{Numerical Criterion}\label{subsect_num_crit}
In this subsection, we review the numerical criterion given in \cite[\S 2]{King94}, which is an application of the famous Mumford's numerical criterion in geometric invariant theory (GIT) \cite[Chapter 2]{MFK94}. Let $G$ be a connected reductive group over $k$. Let $\mathscr{R}$ be a smooth affine variety over $k$ together with a $G$-action. Denote by $\Delta$ the kernel of this action. Let $L$ be the trivial line bundle on $\mathscr{R}$. A $G$-linearization of $L$ is given by a character $\chi$ of $G$ in this case. Moreover, as a trivial line bundle on $\mathscr{R}$, a function in $k[\mathscr{R}]$ is regarded as a section of $L$. Given a character $\chi$ of $G$, a function $f(x) \in k[\mathscr{R}]$ is a relative invariant of weight $\chi^n$, if
\begin{align*}
    f(g \cdot x)=\chi^n(g) f(x),
\end{align*}
where $n$ is a non-negative integer. Denote by $k[\mathscr{R}]^{G,\chi^i}$ the subset of relative invariants of weight $\chi^i$. If $f \in k[\mathscr{R}]^{G,\chi^i}$ and $g \in k[\mathscr{R}]^{G,\chi^j}$, then $f \cdot g \in k[\mathscr{R}]^{G,\chi^{i+j}}$. Thus, $\bigoplus_{i \geq 0} k[\mathscr{R}]^{G,\chi^i}$ has a natural graded structure. Denote by
\begin{align*}
\mathscr{R} /\!\!/ (G,\chi) := {\rm Proj}\Big(\bigoplus_{i \geq 0}  k[\mathscr{R}]^{G,\chi^i}\Big).
\end{align*}
The quotient $\mathscr{R} /\!\!/ (G,\chi)$ is known as the GIT quotient, and it is a scheme projective over $k[\mathscr{R}]^{G,\chi^0}=k[\mathscr{R}]^{G}$. If $k[\mathscr{R}]^{G} \cong k$, then $\mathscr{R} /\!\!/ (G,\chi)$ is a projective variety. Now we consider the stability condition introduced by Mumford:
\begin{defn}\label{defn_stab_num}
A point $x \in \mathscr{R}$ is \emph{$\chi$-semistable}, if there is a relative invariant $f \in k[\mathscr{R}]^{G,\chi^n}$ for $n \geq 1$ such that $f(x)\neq 0$. It is \emph{$\chi$-stable}, if it is $\chi$-semistable, $\dim(G \cdot x) = \dim G / \Delta$, and the $G$-action on the set $\{x \in \mathscr{R} \text{ }|\text{ } f(x) \neq 0\}$ is closed.
\end{defn}

There is a natural pairing
\begin{align*}
    \langle \cdot,\cdot \rangle: {\rm Hom}(\mathbb{G}_m,G) \times {\rm Hom}(G, \mathbb{G}_m) \rightarrow \mathbb{Z}
\end{align*}
of cocharacters and characters such that $\langle \mu,\chi \rangle = n$ if $\chi(\mu(t))=t^n$. Note that this pairing can be defined over rational coefficients. As an application of Mumford's numerical criterion, we have the following result to determine the stability:

\begin{prop}[Proposition 2.5 in \cite{King94}]\label{prop_king2.5}
A point $x \in \mathscr{R}$ is $\chi$-semistable if and only if $\chi(\Delta)=\{1\}$ and given any cocharacter $\mu$ of $G$, if $\lim_{t\rightarrow 0} \mu(t) \cdot x$ exists, then $\langle \mu, \chi \rangle \geq 0$. A point $x \in \mathscr{R}$ is $\chi$-stable if and only if the cocharacters $\mu$, for which $\lim_{t\rightarrow 0} \mu(t) \cdot x$ exists and $\langle \mu,\chi \rangle=0$, are with values in $\Delta$.
\end{prop}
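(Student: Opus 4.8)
The plan is to deduce everything from Mumford's numerical criterion by transferring the problem about \emph{relative} invariants on $\mathscr{R}$ to one about genuine invariants on an auxiliary affine variety. First I would set $\hat{\mathscr{R}} := \mathscr{R}\times\mathbb{A}^1$ and let $G$ act by $g\cdot(x,z) = (g\cdot x, \chi(g)^{-1}z)$. A direct check shows that $f\mapsto f(x)z^n$ identifies $k[\mathscr{R}]^{G,\chi^n}$ with the space of $G$-invariant functions on $\hat{\mathscr{R}}$ that are homogeneous of degree $n$ in $z$, so that $k[\hat{\mathscr{R}}]^G = \bigoplus_{n\geq0} k[\mathscr{R}]^{G,\chi^n}z^n$. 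Under this dictionary, $x$ is $\chi$-semistable precisely when there is a $G$-invariant function on $\hat{\mathscr{R}}$ that vanishes on the zero-section $\mathscr{R}\times\{0\}$ (i.e. has positive $z$-degree) yet is nonzero at the point $\hat{x} := (x,1)$.

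Next I would treat semistability. Since $G$ is reductive and $\hat{\mathscr{R}}$ is affine, the fundamental theorem of affine GIT says invariant functions separate disjoint closed $G$-invariant subsets; as $\mathscr{R}\times\{0\}$ is closed and $G$-stable, the existence of the separating invariant above is equivalent to $\overline{G\cdot\hat{x}}\cap(\mathscr{R}\times\{0\}) = \emptyset$. Now I would apply the Hilbert--Mumford criterion: this orbit closure meets the zero-section iff there is a cocharacter $\mu$ of $G$ with $\lim_{t\to0}\mu(t)\cdot\hat{x}\in\mathscr{R}\times\{0\}$. Computing $\mu(t)\cdot\hat{x} = (\mu(t)\cdot x,\ t^{-\langle\mu,\chi\rangle})$, the limit lies in the zero-section exactly when $\lim_{t\to0}\mu(t)\cdot x$ exists in $\mathscr{R}$ and $\langle\mu,\chi\rangle<0$. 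Hence $x$ is $\chi$-semistable iff no such $\mu$ exists, i.e. every cocharacter $\mu$ whose limit $\lim_{t\to0}\mu(t)\cdot x$ exists satisfies $\langle\mu,\chi\rangle\geq0$. The constraint $\chi(\Delta)=\{1\}$ I would extract by testing this on cocharacters $\mu$ valued in $\Delta$: then $\mu(t)\cdot x=x$ for all $t$, so both $\mu$ and $\mu^{-1}$ have a limit, forcing $\langle\mu,\chi\rangle=0$ and hence $\chi$ trivial on $\Delta$ (combined with $\chi^n(\Delta)=\{1\}$, read off directly from any nonzero relative invariant of weight $\chi^n$).

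For stability I would unwind the three defining conditions through the same dictionary: $x$ being $\chi$-stable corresponds to $\hat{x}$ having a \emph{closed} $G$-orbit together with $\dim(G\cdot\hat{x}) = \dim G/\Delta$ (equivalently $\dim(G\cdot x)=\dim G/\Delta$, since on the semistable locus the extra $z$-coordinate changes neither the orbit dimension nor the stabilizer once $\chi(\Delta)=\{1\}$ is accounted for). By Hilbert--Mumford the cocharacters $\mu$ along which the orbit of $\hat{x}$ can degenerate without scaling in the $z$-direction are exactly those with $\lim_{t\to0}\mu(t)\cdot x$ existing and $\langle\mu,\chi\rangle=0$; closedness of the orbit together with the dimension (finite stabilizer modulo $\Delta$) condition forces every such $\mu$ to fix $\hat{x}$, hence to act trivially on $x$, i.e. to be valued in $\Delta$. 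Conversely, if every such $\mu$ lands in $\Delta$, the same criterion shows no nontrivial degeneration or positive-dimensional stabilizer direction survives, giving stability. This yields the stated characterization.

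The main obstacle, I expect, is the bookkeeping in the GIT reduction rather than the clean Hilbert--Mumford computation: one must justify carefully that the separating invariant can be taken to vanish on \emph{all} of the zero-section, and, in the stable case, that the hypotheses that the $G$-action on $\{f\neq0\}$ is closed and that $\dim(G\cdot x)=\dim G/\Delta$ together are equivalent to the vanishing of the degeneration directions with $\langle\mu,\chi\rangle=0$ modulo $\Delta$. Keeping precise track of the kernel $\Delta$ throughout---so that stability is read as finiteness of the stabilizer of $\hat{x}$ \emph{modulo} $\Delta$---is where the argument is most delicate.
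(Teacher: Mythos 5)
The paper offers no proof of this statement at all—it is quoted verbatim from \cite{King94}—and your argument is exactly King's original one: linearize the trivial bundle via the twisted action $g\cdot(x,z)=(g\cdot x,\chi(g)^{-1}z)$ on $\hat{\mathscr{R}}=\mathscr{R}\times\mathbb{A}^1$, identify $k[\hat{\mathscr{R}}]^G$ with $\bigoplus_{n\geq 0}k[\mathscr{R}]^{G,\chi^n}z^n$ so that semistability becomes $\overline{G\cdot(x,1)}\cap(\mathscr{R}\times\{0\})=\emptyset$, and then apply the affine Hilbert--Mumford/Kempf criterion (this is King's Lemma 2.2 together with his proof of Proposition 2.5), so your proposal is correct and follows the same route as the cited source. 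The one point worth noting is that your derivation of $\chi(\Delta)=\{1\}$ from cocharacters valued in $\Delta$ only controls $\chi$ on the subgroup generated by such images, i.e.\ on $\Delta^0$; this suffices here because in King's quiver setting (and in this paper's applications) the kernel $\Delta$ is a connected torus, but for disconnected $\Delta$ one only gets $\chi(\Delta)$ torsion, so the connectedness hypothesis should be made explicit where you invoke it.
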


Denote by $\mathscr{R}^{(s)s}_\chi \subseteq \mathscr{R}$ the open subset of $\chi$-(semi)stable points. The points in $\mathscr{R} /\!\!/ (G,\chi)$ are in one-to-one correspondence with equivalence classes of points in $\mathscr{R}^{ss}_\chi$, where the equivalence of points in $\mathscr{R}^{ss}_\chi$ is given as follows: $x \sim y$ if $\overline{G \cdot x}\ \cap\ \overline{G \cdot y}\ \cap\ \mathscr{R}^{ss}_\chi \neq \emptyset$. This equivalence is called the \emph{GIT equivalence}.

\begin{prop}[Proposition 2.6 in \cite{King94}]\label{prop_king2.6}
Let $x, y \in \mathscr{R}^{ss}_{\chi}$ be two points. An orbit $G \cdot x$ is closed in $\mathscr{R}^{ss}_\chi$ if and only if when the limit $\lim_{t\rightarrow 0} \mu(t) \cdot x$ exists for any cocharacter $\mu$ of $G$ satisfying $\langle \mu,\chi \rangle = 0$, the limit $\lim_{t\rightarrow 0} \mu(t) \cdot x$ is in the orbit $G \cdot x$. Furthermore, $x \sim y$ if and only if there are cocharacters $\mu_1$ and $\mu_2$ such that $\langle \mu_1,\chi \rangle = \langle \mu_2,\chi \rangle = 0$ and the limits $\lim_{t \rightarrow 0} \mu_1(t) \cdot x$ and $\lim_{t \rightarrow 0} \mu_2(t) \cdot y$ are in the same closed $G$-orbit.
\end{prop}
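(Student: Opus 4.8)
First I would reduce both assertions to standard affine geometric invariant theory by means of the \emph{affine cone construction}. Set $V := \mathscr{R}\times\mathbb{A}^1$ and let $G$ act by the twisted action $g\cdot(w,z) = (g\cdot w,\, \chi(g)^{-1}z)$; for $w\in\mathscr{R}$ write $\hat w := (w,1)\in V$. A direct computation of invariants gives $k[V]^G = \bigoplus_{i\geq 0} k[\mathscr{R}]^{G,\chi^i}$, so that $V/\!\!/G = \Spec\big(\bigoplus_i k[\mathscr{R}]^{G,\chi^i}\big)$ is the affine cone over $\mathscr{R}/\!\!/(G,\chi)$, with quotient map $q\colon V\to V/\!\!/G$. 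Scaling the $\mathbb{A}^1$-factor gives a residual $\mathbb{G}_m$-action on $V$ commuting with $G$ and inducing the grading; the vertex of the cone is $q(\mathscr{R}\times\{0\})$, and $x$ is $\chi$-semistable precisely when $q(\hat x)$ is not the vertex. The engine of the argument is the dictionary on cocharacters: since $\mu(t)\cdot\hat x = (\mu(t)\cdot x,\, t^{-\langle\mu,\chi\rangle})$, for $\chi$-semistable $x$ (where $\langle\mu,\chi\rangle\geq 0$ whenever the limit exists, by Proposition~\ref{prop_king2.5}) the limit $\lim_{t\to 0}\mu(t)\cdot\hat x$ exists in $V$ if and only if $\langle\mu,\chi\rangle = 0$ and $\lim_{t\to 0}\mu(t)\cdot x$ exists, in which case it equals $(x_0,1)$ with $x_0 := \lim_{t\to 0}\mu(t)\cdot x$. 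In particular any such $x_0$ is again $\chi$-semistable, because $q(\hat{x_0}) = q(\hat x)$ is not the vertex.

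The decisive technical input, which I would isolate as a lemma, is the correspondence of closed orbits: for $\chi$-semistable $w$, the orbit $G\cdot\hat w$ is closed in $V$ if and only if $G\cdot w$ is closed in $\mathscr{R}^{ss}_\chi$. One ingredient is that the unique closed orbit in $\overline{G\cdot\hat x}$ cannot lie in the slice $\mathscr{R}\times\{0\}$, since $q$ carries that slice to the vertex while $q(\hat x)$ avoids it; together with the weight analysis this forces Kempf's optimal destabilizing cocharacter $\lambda$ to satisfy $\langle\lambda,\chi\rangle = 0$, with $\lim_{t\to 0}\lambda(t)\cdot\hat x = (w,1)$ for $w := \lim_{t\to 0}\lambda(t)\cdot x$. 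The harder half, that $G\cdot\hat w$ closed in $V$ implies $G\cdot w$ closed in $\mathscr{R}^{ss}_\chi$, is where the grading $\mathbb{G}_m$ must be tracked: for any semistable $w_0\in\overline{G\cdot w}\cap\mathscr{R}^{ss}_\chi$ the identity $g\cdot\hat w = (g\cdot w,\chi(g)^{-1})$ shows $q(\hat{w_0})$ lies in the closure of the $\mathbb{G}_m$-orbit of $q(\hat w)$, hence (being non-vertex) equals $q(w,s)$ for some $s$; since $G\cdot(w,s) = s\cdot(G\cdot\hat w)$ is again closed, applying Kempf to $w_0$ and invoking uniqueness of the closed orbit in a fibre of $q$ yields $\lim_{t\to 0}\lambda_0(t)\cdot w_0\in G\cdot w$, and a dimension comparison of the two orbits upgrades this to $w_0\in G\cdot w$. \textbf{This lemma is the step I expect to be the main obstacle}, precisely because of the bookkeeping between the $G$-action on the cone and the residual scaling.

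With these tools the first assertion follows. For the forward direction, assume $G\cdot x$ is closed in $\mathscr{R}^{ss}_\chi$ and let $\mu$ satisfy $\langle\mu,\chi\rangle = 0$ with $x_0 := \lim_{t\to 0}\mu(t)\cdot x$ existing; by the dictionary $x_0$ is semistable and lies in $\overline{G\cdot x}$, so $x_0\in\overline{G\cdot x}\cap\mathscr{R}^{ss}_\chi = G\cdot x$ (the latter equality holding since $\mathscr{R}^{ss}_\chi$ is open and $G\cdot x$ is closed in it), giving $x_0\in G\cdot x$. Conversely, suppose every such limit lies in $G\cdot x$ but, for contradiction, that $G\cdot x$ is not closed in $\mathscr{R}^{ss}_\chi$. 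Kempf in $V$ then produces $\lambda$ with $\langle\lambda,\chi\rangle = 0$ and $w := \lim_{t\to 0}\lambda(t)\cdot x$ such that $G\cdot\hat w$ is closed in $V$; the hypothesis forces $w\in G\cdot x$, while the lemma gives that $G\cdot w = G\cdot x$ is closed in $\mathscr{R}^{ss}_\chi$, a contradiction.

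For the second assertion I would argue through the description of the GIT equivalence recalled before the statement. If $x\sim y$, pick a point of $\overline{G\cdot x}\cap\overline{G\cdot y}\cap\mathscr{R}^{ss}_\chi$; its orbit closure in the semistable locus contains the unique closed orbit of $\overline{G\cdot x}\cap\mathscr{R}^{ss}_\chi$, and lies in $\overline{G\cdot y}$ as well, so the closed orbits attached to $x$ and to $y$ coincide in a common $O$. The crux from the proof of the first assertion (Kempf in $V$ together with the lemma) then supplies cocharacters $\mu_1,\mu_2$ with $\langle\mu_i,\chi\rangle = 0$ driving $x$ and $y$ into $O$. Conversely, given such $\mu_1,\mu_2$ with $\lim_{t\to0}\mu_1(t)\cdot x$ and $\lim_{t\to0}\mu_2(t)\cdot y$ in a common closed orbit $O$, the dictionary shows these limits are semistable, whence $O\subseteq\overline{G\cdot x}\cap\overline{G\cdot y}\cap\mathscr{R}^{ss}_\chi$ is nonempty and $x\sim y$. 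The recurring subsidiary fact, used to pin the cocharacters to $\langle\mu_i,\chi\rangle = 0$, is that a cocharacter fixing a semistable point pairs to zero with $\chi$, which one sees by applying Proposition~\ref{prop_king2.5} to both $\mu$ and $\mu^{-1}$.
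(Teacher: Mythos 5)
The paper does not prove this proposition at all---it is quoted verbatim from \cite{King94} (Proposition 2.6 there)---so the only meaningful comparison is with King's original argument, and your proposal is correct and reconstructs essentially that argument: the twisted cone $V=\mathscr{R}\times\mathbb{A}^1$ with $k[V]^G=\bigoplus_{i\geq 0}k[\mathscr{R}]^{G,\chi^i}$ is King's Lemma 2.2, and your limit dictionary plus the closed-orbit correspondence is exactly how King reduces both assertions to the Kempf--Birkes theorem that the unique closed orbit in an affine orbit closure is reached by a one-parameter subgroup. Your handling of the two genuinely delicate points---tracking the residual $\mathbb{G}_m$-scaling (so that $w_0\in\overline{G\cdot w}\cap\mathscr{R}^{ss}_\chi$ only gives $q(\hat w_0)=q(w,s)$ rather than $q(\hat w_0)=q(\hat w)$) and the dimension comparison upgrading $\lim_{t\to 0}\lambda_0(t)\cdot w_0\in G\cdot w$ to $w_0\in G\cdot w$---is sound, so the proof goes through.
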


Propositions \ref{prop_king2.5} and \ref{prop_king2.6} will be used later to construct the moduli spaces of filtered $G$-local systems.

\subsection{Representations of Quivers}\label{subsect_rep_quiv}
A \emph{quiver} $Q$ is a directed graph $(Q_0,Q_1,s,t)$, where $Q_0$ is the set of vertices, $Q_1$ is the set of arrows and $s,t: Q_1 \rightarrow Q_0$ are the source map and target map, respectively. We use the notation $Q=(Q_0,Q_1)$ for simplicity. A quiver $Q$ is \emph{finite} if $Q_0$ and $Q_1$ are both finite sets, and $Q$ is \emph{connected} if it is connected as a graph. In this paper, quivers we consider are always supposed to be finite and connected. A \emph{path of length $\ell$} is a sequence of arrows $w = (a_1,\dots,a_{\ell})$ such that $s(a_{i+1}) = t(a_i)$ for $1 \leq i \leq \ell -1$, i.e.
\begin{align*}
    \bullet \xrightarrow{a_1} \bullet \xrightarrow{a_2} \dots \xrightarrow{a_{\ell}} \bullet.
\end{align*}
We also use the notation $w=a_{\ell} \dots a_{1}$ for convenience, which is regarded as compositions of arrows. The source (resp. target) of $w$ is defined to be the source (resp. target) of $a_1$ (resp. $a_{\ell}$). Moreover, a path $w=a_{\ell} \dots a_{1}$ is a \emph{loop} if $s(a_1) = t(a_{\ell})$. We associate each point $v \in Q_0$ with a path of length zero, with a slight abuse of notation, denote by $Q_0$ the set of all paths of length zero. Then, let $Q_{\ell}$ be the set of all paths of length $\ell$. Denote by $kQ$ the \emph{path algebra} of $Q$, of which the basis of the underlying $k$-vector space is the set of all paths of length $\ell \geq 0$ in $Q$ and the product is given by the composition of paths. Since
\begin{align*}
kQ = \bigoplus_{\ell \geq 0} k Q_\ell, \quad k Q_i \cdot k Q_j \subseteq kQ_{i+j},
\end{align*}
the path algebra has a natural graded structure.

Let $Q$ be a (finite and connected) quiver. A \emph{representation} of $Q$ is denoted by $M=(M_v,\phi_a)_{v \in Q_0, a \in Q_1}$, where $M_v$ is a $k$-vector space for $v \in Q_0$ and $\phi_a$ is a $k$-linear map $\phi_a: M_{s(a)} \rightarrow M_{t(a)}$ for $a \in Q_1$. Let $w=a_\ell \dots a_{1}$ be a path, and then $\phi_w = \phi_{a_\ell} \cdots \phi_{a_1}$. A representation $M$ of $Q$ is \emph{of finite dimension} if $M_v$ is of finite dimension for each $v \in Q_0$, and the \emph{dimension vector} of $M$ is given by $\boldsymbol{n}=\{n_v\}_{v \in Q_0}$, where $n_v = \dim M_v$. If the vector spaces $M_v$ are of the same dimension $n$, then the representation $M$ is said to be \emph{of dimension $n$}. Denote by ${\rm rep}(Q)$ the category of finite dimensional representations of $Q$. Note that this category is an abelian category and it is equivalent to the category of $kQ$-modules of finite dimension \cite{ASS06}. Denote by $K_0({\rm rep}(Q))$ the Grothendieck group of the abelian category ${\rm rep}(Q)$, and let $\Theta: K_0({\rm rep}(Q)) \rightarrow \mathbb{Z}$ be an additive function on this group. 

\begin{defn}\label{defn_stab_rep_quiv}
A finite dimensional representation $M$ of $Q$ is \emph{$\Theta$-semistable} (resp. \emph{$\Theta$-stable}) if $\Theta(M)=0$ and for any nontrivial proper submodule $M' \subseteq M$, we have $\Theta(M') \geq 0$ (resp. $\Theta(M') > 0$). Two $\Theta$-semistable representations are \emph{$S$-equivalent} if they have the same composition factors in the category of $\Theta$-semistable representations.
\end{defn}

Now we fix a dimension vector $\boldsymbol{n}$. The isomorphism classes of representations of $Q$ with dimension vector $\boldsymbol{n}$ are in one-to-one correspondence with orbits in
\begin{align*}
    \mathscr{R}(Q,\boldsymbol{n}) = \bigoplus_{a \in Q_1} {\rm Hom}(k^{n_{s(a)}}, k^{n_{t(a)}})
\end{align*}
under the action of
\begin{align*}
    {\rm GL}(\boldsymbol{n}) = \prod_{v \in Q_0} {\rm GL}_{n_v}(k),
\end{align*}
where the action is defined as
\begin{align*}
(g_{v})_{v \in Q_0} \cdot (\phi_a)_{a \in Q_1} := ( g_{t(a)} \cdot \phi_a \cdot g^{-1}_{s(a)})_{a \in Q_1},
\end{align*}
for $(g_{v})_{v \in Q_0} \in {\rm GL}(\boldsymbol{n})$ and $(\phi_a)_{a \in Q_1} \in \mathscr{R}(Q,\boldsymbol{n})$. Then, $\mathscr{R}(Q,\boldsymbol{n})$ is called the \emph{space of quiver representations}. Similarly, we define
\begin{align*}
    \mathscr{R}(Q,n):=\bigoplus_{a \in Q_1} {\rm Hom}(k^n,k^n),
\end{align*}
and the isomorphism classes of representations of $Q$ with dimension $n$ are in one-to-one correspondence with orbits in $\mathscr{R}(Q,n)$ under the action of ${\rm GL}(\boldsymbol{n})$.

Given an additive function $\Theta: K_0({\rm rep}(Q)) \rightarrow \mathbb{Z}$, we define a character of ${\rm GL}(\boldsymbol{n})$ as follows
\begin{align*}
    \chi_{\Theta}(\boldsymbol{g}):= \prod_{v \in Q_0} \det(g_v)^{\Theta_v},
\end{align*}
where $\boldsymbol{g}:=(g_v)_{v \in Q_0}$ and the integers $\Theta_v$ are given from the formula $\Theta(M) = \sum_{v \in Q_0} \Theta_v \dim M_v$ for any representation $M$ of $Q$.

\begin{thm}[Proposition 3.1 in \cite{King94}]\label{thm_chi_stab_and_stab}
A point in $\mathscr{R}(Q,\boldsymbol{n})$ is $\chi_\Theta$-semistable (resp. $\chi_\Theta$-stable) if and only if the corresponding representation of $Q$ is $\Theta$-semistable (resp. $\Theta$-stable).
\end{thm}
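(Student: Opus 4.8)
The plan is to reduce both equivalences to the numerical criterion of Proposition \ref{prop_king2.5} by translating one-parameter subgroups of ${\rm GL}(\boldsymbol{n})$ into $\mathbb{Z}$-filtrations of the representation by submodules. Fix a representation $M=(M_v,\phi_a)$ with dimension vector $\boldsymbol{n}$, viewed as a point $x=(\phi_a)\in\mathscr{R}(Q,\boldsymbol{n})$. Any cocharacter $\mu=(\mu_v)_{v\in Q_0}$ of ${\rm GL}(\boldsymbol{n})$ determines, after simultaneous diagonalization, a weight decomposition $M_v=\bigoplus_{i\in\mathbb{Z}}M_v^{(i)}$ on which $\mu_v(t)$ acts by $t^i$; set $M_v^{\geq p}=\bigoplus_{i\geq p}M_v^{(i)}$. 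First I would record the two elementary facts on which everything rests. Writing $\phi_a^{ji}$ for the component of $\phi_a$ from $M_{s(a)}^{(i)}$ to $M_{t(a)}^{(j)}$, the action $\mu(t)\cdot\phi_a=\mu_{t(a)}(t)\,\phi_a\,\mu_{s(a)}(t)^{-1}$ scales $\phi_a^{ji}$ by $t^{\,j-i}$, so $\lim_{t\to 0}\mu(t)\cdot x$ exists if and only if $\phi_a^{ji}=0$ whenever $j<i$, i.e.\ if and only if each $M^{\geq p}=(M_v^{\geq p})_v$ is a subrepresentation of $M$. Conversely, given any decreasing filtration of $M$ by submodules one recovers such a $\mu$ by choosing arbitrary vector-space splittings, the existence of the limit depending only on the filtration and not on the splittings.

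Next I would compute the pairing. From $\det(\mu_v(t))=t^{\sum_i i\dim M_v^{(i)}}$ and the definition $\chi_\Theta(\boldsymbol{g})=\prod_v\det(g_v)^{\Theta_v}$ one gets $\langle\mu,\chi_\Theta\rangle=\sum_i i\,\Theta(M^{(i)})$, where $\Theta(M^{(i)})=\sum_v\Theta_v\dim M_v^{(i)}$. Since $\Theta$ is additive, $\Theta(M^{(i)})=\Theta(M^{\geq i})-\Theta(M^{\geq i+1})$, and a summation by parts yields the clean formula
\begin{align*}
\langle\mu,\chi_\Theta\rangle=\sum_{i\in\mathbb{Z}}\Theta(M^{\geq i}),
\end{align*}
a finite sum because the filtration stabilizes. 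I would also note that the diagonal $\mathbb{G}_m$ lies in the kernel $\Delta$ of the action and that $\chi_\Theta$ restricted to the scalar $\lambda\cdot{\rm id}$ equals $\lambda^{\Theta(M)}$, so $\chi_\Theta(\Delta)=\{1\}$ is equivalent to the normalization $\Theta(M)=0$; for a connected quiver $\Delta$ is exactly these scalars, which is what I need for the stable case.

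With these tools the semistable equivalence is immediate. If $x$ is $\chi_\Theta$-semistable then $\Theta(M)=0$, and for any nontrivial proper submodule $M'$ I take the two-step filtration $M\supsetneq M'\supsetneq 0$; the associated cocharacter has a limit and, by the displayed formula, $\langle\mu,\chi_\Theta\rangle=\Theta(M')$, whence $\Theta(M')\geq 0$ by Proposition \ref{prop_king2.5}. Conversely, if $M$ is $\Theta$-semistable then $\Theta(M)=0$ and every term $\Theta(M^{\geq i})$ in the sum is either $0$ (when $M^{\geq i}$ equals $M$ or $0$) or nonnegative (when it is a proper nontrivial submodule), so $\langle\mu,\chi_\Theta\rangle\geq 0$ for every $\mu$ with a limit, giving $\chi_\Theta$-semistability.

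For the stable case I would run the same two constructions with the refined part of Proposition \ref{prop_king2.5}. Assuming $\chi_\Theta$-stability, a proper nontrivial $M'$ with $\Theta(M')=0$ would give via the two-step filtration a cocharacter with $\langle\mu,\chi_\Theta\rangle=0$ that is not valued in $\Delta$ (its weights differ on $M'$ and on a complement, both nonzero), contradicting stability; hence $\Theta(M')>0$. Conversely, if $M$ is $\Theta$-stable and $\mu$ has a limit with $\langle\mu,\chi_\Theta\rangle=\sum_i\Theta(M^{\geq i})=0$, then since every proper nontrivial submodule contributes strictly positively, every $M^{\geq i}$ must equal $M$ or $0$; the filtration therefore has a single jump, forcing $\mu_v(t)=t^c\cdot{\rm id}$ for all $v$, i.e.\ $\mu$ valued in $\Delta$, which is exactly the condition for $\chi_\Theta$-stability. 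The main obstacle is the careful bookkeeping in the first paragraph: matching the sign and direction conventions so that the existence of the limit corresponds precisely to a filtration by \emph{sub}representations, and verifying that an arbitrary (non-submodule) splitting suffices to realize a given filtration by a cocharacter; once this dictionary and the summation-by-parts identity for $\langle\mu,\chi_\Theta\rangle$ are in place, both equivalences follow formally.
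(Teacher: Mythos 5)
Your proof is correct, and it is essentially the argument of King himself: the paper states this result as a quotation of \cite[Proposition 3.1]{King94} without reproducing a proof, and your dictionary between cocharacters of ${\rm GL}(\boldsymbol{n})$ with existing limits and $\mathbb{Z}$-filtrations of $M$ by subrepresentations, together with the identity $\langle\mu,\chi_\Theta\rangle=\sum_{i}\Theta(M^{\geq i})$ and the identification of $\Delta$ with the diagonal scalars, is exactly how King deduces the equivalence from the numerical criterion (Proposition \ref{prop_king2.5}). One cosmetic point: the sum $\sum_{i}\Theta(M^{\geq i})$ is finite only because the infinitely many tail terms equal $\Theta(M)$, which vanishes in every place you invoke the formula (via $\chi_\Theta(\Delta)=\{1\}$ in one direction and via the normalization in Definition \ref{defn_stab_rep_quiv} in the other), so your argument stands as written.
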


\subsection{Relations in Quivers}

Let $Q$ be a (finite and connected) quiver, and let $kQ$ be the path algebra. A \emph{relation} (or a \emph{$k$-relation}) in $Q$ is a $k$-linear combination of paths in $Q$ such that the sources and targets of all paths are the same. Let $I$ be a set of relations in $Q$, and if there is only one relation in $I$, we directly use $I$ for the single relation. A representation $M$ of $Q$ satisfies the relations in $I$ if $\phi_w = 0$ for any relation $w \in I$. We define
\begin{align*}
\mathscr{R}(Q,I,\boldsymbol{n}) = \{ (\phi_a)_{a \in Q_1} \text{ }|\text{ } \phi_w = 0 \text{ for } w \in I \}.
\end{align*}
If the subset $\mathscr{R}(Q,I,\boldsymbol{n}) \subseteq \mathscr{R}(Q,\boldsymbol{n})$ is invariant under ${\rm GL}(\boldsymbol{n})$, then the ${\rm GL}(\boldsymbol{n})$-orbits in $\mathscr{R}(Q,I,\boldsymbol{n}) \subseteq \mathscr{R}(Q,\boldsymbol{n})$ are in one-to-one correspondence with the isomorphism classes of representations of $Q$ with dimension vector $\boldsymbol{n}$ satisfying the relations in $I$. As a special case, given a single loop $w$ with source and target $v$, let $I = w - v$. We define
\begin{align*}
\mathscr{R}(Q,I,\boldsymbol{n}) := \{ (\phi_a)_{a \in Q_1} \text{ }|\text{ } \phi_I = 0 \}.
\end{align*}
Note that the relation $\phi_I=0$ is equivalent to the condition that $\phi_w$ is the identity matrix (at the vertex $v$). 

We give an example to end this subsection. This example is helpful for understanding the equivalence between filtered local systems and quiver representations in Proposition \ref{prop_para_and_orbit}.
\begin{exmp}\label{exmp_basic_exmp}
We define $Q=(Q_0,Q_1)$ to be the quiver as follows
\begin{center}
\begin{tikzcd}
v \arrow[loop,"a" description]
\end{tikzcd}
\end{center}
which consists of a single vertex and a single loop. Given a positive integer $n$, the subset
\begin{align*}
    \mathscr{R}(Q,\det,n):=\{ \phi_a \in {\rm GL}_n(k) \} \subseteq \mathscr{R}(Q,n)
\end{align*}
consists of all homomorphisms $\phi_a$ with nonzero determinant. Clearly, $\mathscr{R}(Q,\det,n)$ is invariant under the action of ${\rm GL}_n(k)$.

Now let $Q'=(Q'_0,Q'_1)$ be the quiver
\begin{center}
\begin{tikzcd}
v'_1 \arrow[rr, bend right, "a'_1" description] & & v'_2 \arrow[ll, bend right, "a'_2" description]
\end{tikzcd}
\end{center}
which consists of two vertices and two arrows. Let $I'=a'_2 a'_1 - v'_1$ be a relation. Consider
\begin{align*}
\mathscr{R}(Q',I',n)=\{ (\phi_{a'_1}, \phi_{a'_2}) \text{ } | \text{ } \phi_{a'_2}\phi_{a'_1} = {\rm id}, \text{ } \phi_{a'_i} \in {\rm End}(k^n) \}.
\end{align*}
Note that the condition $\phi_{a'_2}\phi_{a'_1} = {\rm id}$ implies that $\phi_{a'_i} \in {\rm GL}_n(k)$. Moreover, the subset $\mathscr{R}(Q',I',n) \subseteq \mathscr{R}(Q',n)$ is invariant under the action of $\prod_{v \in Q'_0} G_v$, where $G_v = {\rm GL}_n(k)$.

It is clear that ${\rm GL}_n(k)$-orbits in $\mathscr{R}(Q,\det,n)$ are in one-to-one correspondence with $\left( \prod_{v \in Q'_0} G_v \right)$-orbits in $\mathscr{R}(Q',I',n)$. 
\end{exmp}

\subsection{Fundamental Group Representations and Local Systems}\label{subsect_rep_fund_grp}
Let $X$ be a connected smooth projective algebraic curve over $k$ with genus $g$. The fundamental group of $X$ at a base point $x_0$ is
\begin{align*}
    \pi_1(X,x_0)=\langle\ a_i,b_j, 1 \leq i,j \leq g \text{ }|\text{ } \prod_{i=1}^g [a_i,b_i] = {\rm id}\ \rangle.
\end{align*}
If there is no ambiguity, we omit the base point $x_0$ and use the notation $\pi_1(X)$ for the fundamental group. Fixing a positive integer $n$, we consider the space of fundamental group representations
\begin{align*}
{\rm Hom}(\pi_1(X),{\rm GL}_n(k)).
\end{align*}
We want to construct a quiver $\mathscr{Q}'$ such that fundamental group representations can be regarded as representations of $\mathscr{Q}'$, and one possible construction of such a quiver $\mathscr{Q}'=(\mathscr{Q}'_0,\mathscr{Q}'_1)$ is given as follows. The set of vertices $\mathscr{Q}'_0=\{v_0\}$ contains only one element, which can be regarded as the base point $x_0 \in X$. The set of arrows $\mathscr{Q}'_1$ contains $4g$ elements and denote them by
\begin{align*}
    a_i,b_i,c_i,d_i: v_0 \rightarrow v_0 \quad \text{ for } \quad 1 \leq i \leq g. 
\end{align*}
Since $\mathscr{Q}'_0$ has a single vertex, a positive integer $n$ is therefore regarded as a dimension vector for representations of $\mathscr{Q}'$. Let $\mathscr{I}' = \{ I'_0, I'_{ai}, I'_{bi}, 1 \leq i \leq g \}$ be a set of relations, where
\begin{align*}
    I'_0 = \prod_{i=1}^g (a_i b_i c_i d_i) - v_0, \quad I'_{ai} = a_i c_i - v_0, \quad I'_{bi} = b_i d_i - v_0.
\end{align*}
A representation $(\phi_a)_{a \in \mathscr{Q}'_1}$ of $\mathscr{Q}'$ satisfying relations in $\mathscr{I}$ means that
\begin{align*}
    \phi_{a_i} = \phi_{c_i}^{-1}, \quad \phi_{b_i} = \phi_{d_i}^{-1}, 
\end{align*}
and
\begin{align*}
    \prod_{i=1}^g [\phi_{a_i} , \phi_{b_i}]  = {\rm id}.
\end{align*}
Since the above relations imply that $\phi_a \in {\rm GL}_n(k)$ for each arrow $a \in \mathscr{Q}'_1$,  we have
\begin{align*}
\mathscr{R}(\mathscr{Q}',\mathscr{I}',n) \cong {\rm Hom}(\pi_1(X),{\rm GL}_n(k)).
\end{align*}
In this way, we construct a quiver $\mathscr{Q}'$ such that fundamental group representations are equivalent to quiver representations. We also want to remind the reader that the construction of such a quiver is not unique.

\begin{rem}
In the above construction, the arrow $c_i$ (resp. $d_i$) is regarded as the ``inverse" of $a_i$ (resp. $b_i$). We omit $c_i$ and $d_i$ to simplify notations and relations in the construction of the quiver $\mathscr{Q}'$. The simplified notation will be used in the rest of the paper.

Let $\mathscr{Q}_0 = \{v_0\}$ and $\mathscr{Q}_1$ contains $2g$ arrows $a_i,b_i$ for $1 \leq i \leq g$. Define a single relation \begin{align*}
    \mathscr{I} = \prod_{i=1}^g [a_i,b_i] - v_0. 
\end{align*}
Then, we define a subset $\mathscr{R}(\mathscr{Q},\mathscr{I},n)$ of $\mathscr{R}(\mathscr{Q},n)$ as
\begin{align*}
\mathscr{R}(\mathscr{Q},\mathscr{I},n)=\{(\phi_a)_{a \in \mathscr{Q}_1} \text{ }|\text{ } \phi_{\mathscr{I}}=0\}.
\end{align*}
Clearly, 
\begin{align*}
\mathscr{R}(\mathscr{Q},\mathscr{I},n) \cong {\rm Hom}(\pi_1(X),{\rm GL}_n(k)).
\end{align*}

Strictly speaking, the relation $\mathscr{I} = \prod_{i=1}^g [a_i,b_i] - v_0$ here is not a well-defined relation in $\mathscr{Q}$ because the ``inverse" arrows $a_i^{-1}$ and $b^{-1}_i$ are not defined. In the following, we still say that $\mathscr{I}$ is a relation of $\mathscr{Q}$ for convenience. We would like to remind the reader again that $(\mathscr{Q},\mathscr{I})$ is a simplified notation for $(\mathscr{Q}',\mathscr{I}')$, and the advantage of $(\mathscr{Q},\mathscr{I})$ is that it is closely related to the fundamental group. 
\end{rem} 

Fundamental group representations are also related to local systems. A \emph{local system} (of rank $n$) on $X$ is regarded as a pair $(\mathscr{L}_{x_0},\rho)$ in this paper, where $\mathscr{L}_{x_0}$ is a $k$-vector space of dimension $n$ and $\rho: \pi_1(X,x_0) \rightarrow {\rm GL}(\mathscr{L}_{x_0}) \cong {\rm GL}_n(k)$ is a representation. Denote by $[\rho]$ the corresponding point in ${\rm Hom}(\pi_1(X,x_0),{\rm GL}_n(k))$. We refer the reader to \cite{Sim94a,Sim94b} for a more detailed discussion about local systems, and moreover, the definition of local systems does not depend on the base point $x_0$. Two local systems $(\mathscr{L}_{x_0},\rho)$ and $(\mathscr{L}'_{x_0},\rho')$ are isomorphic if there exists an isomorphism $\phi: \mathscr{L}_{x_0} \rightarrow \mathscr{L}'_{x_0}$ of vector spaces such that $\phi \circ \rho = \rho' \circ \phi$. The isomorphism classes of local systems of rank $n$ are in one-to-one correspondence with ${\rm GL}_n(k)$-orbits in ${\rm Hom}(\pi_1(X),{\rm GL}_n(k))$. In conclusion, we have a (one-to-one) correspondence among fundamental group representations, quiver representations and local systems.
\begin{center}
\begin{tikzcd}
\fbox{local systems} \arrow[dd] \arrow[rd] & \\
& \fbox{representations of fundamental groups} \arrow[lu]  \arrow[ld] \\
\fbox{quiver representations} \arrow[uu] \arrow[ru] &
\end{tikzcd}
\end{center}

\section{Moduli Space of Filtered Local Systems}\label{sect_fls}

Let $X$ be a connected smooth projective algebraic curve over $k$ with genus $g$, and let $\boldsymbol{D}$ be a reduced effective divisor on $X$. The divisor $\boldsymbol{D}$ is actually a sum of finitely many distinct points, and therefore, it is also regarded as a set. Denote by $X_{\boldsymbol{D}}:=X \backslash \boldsymbol{D}$ the punctured curve. The fundamental group of $X_{\boldsymbol{D}}$ is
\begin{align*}
     \pi_1(X_{\boldsymbol{D}})=\langle\ a_i,b_j, c_x, 1 \leq i,j \leq g, x \in \boldsymbol{D} \text{ }|\text{ } \prod_{i=1}^g [a_i,b_i] \prod_{x \in \boldsymbol{D}} c_x = {\rm id}\ \rangle,
\end{align*}
where $c_x$ is the homotopy class of loops around the puncture $x \in \boldsymbol{D}$. As in \S\ref{subsect_rep_fund_grp}, a point $[\rho] \in {\rm Hom}(\pi_1(X_{\boldsymbol{D}}) , {\rm GL}_n(k))$ can be considered as a local system $\mathscr{L} = (\mathscr{L}_{x_0},\rho)$ of rank $n$ on $X_{\boldsymbol{D}}$.

The goal of this section is to construct the moduli space of filtered local systems on $X_{\boldsymbol{D}}$. ``Filtered" in this context refers to weighted filtrations, which are also called parabolic structures. When equipped with these additional structures, filtered local systems are the appropriate entities in the topological facet of the tame nonabelian Hodge correspondence. With the idea in \S\ref{subsect_rep_fund_grp}, we construct a quiver $\mathscr{Q}^{\boldsymbol{D}}$ and prove that there is a one-to-one correspondence between isomorphism classes of filtered local systems and orbits in the space of quiver representations (Proposition \ref{prop_para_and_orbit}). Furthermore, this correspondence also preserves the stability conditions (Proposition \ref{prop_stab_local_sys_and_quiv_rep}). Then, we give the construction of the moduli space of filtered local systems (Theorem \ref{thm_tame_Betti_lc}).

\subsection{Quasi-parabolic Local Systems}\label{subsect_qplc}

Let $V$ be a vector space of dimension $n$. A \emph{quasi-parabolic structure} on $V$ is a filtration of subspaces
\begin{align*}
V = V_1 \supsetneq V_2 \supsetneq \dots \supsetneq V_{n'+1}  = \{0\},
\end{align*}
where $n'\leq n$ is a positive integer, and denote by $V_{\bullet}$ a quasi-parabolic structure on $V$. Furthermore, define
\begin{align*}
    \dim (V_{i} / V_{i+1}) = \lambda_i,
\end{align*}
and $\lambda=(\lambda_1,\dots,\lambda_{n'})$ is a partition of $n$. Given a parabolic subgroup $P \subseteq {\rm GL}(V)$, if $P = {\rm Aut}(V_\bullet)$ (resp. $P$ is conjugate to ${\rm Aut}(V_\bullet)$), then we say that $V_\bullet$ is of \emph{type $P$} (resp. \emph{type $[P]$}). 

Now we consider local systems on $X_{\boldsymbol{D}}$. Since local systems do not depend on the choice of base points, let $(\mathscr{L},\rho)$ be a local system of rank $n$ on $X_{\boldsymbol{D}}$, where $\mathscr{L}$ is a vector space of dimension $n$ and $\rho: \pi_1(X_{\boldsymbol{D}}) \rightarrow {\rm GL}(\mathscr{L})$ is a representation.

\begin{defn}\label{defn_quasi_parab_lc}
A \emph{quasi-parabolic local system $\mathscr{L}_{\bullet}$} of rank $n$ on $X_{\boldsymbol{D}}$ is a local system $(\mathscr{L},\rho)$ of rank $n$ on $X_{\boldsymbol{D}}$ equipped with a quasi-parabolic structure $\mathscr{L}_{x,\bullet}$ for each puncture $x \in \boldsymbol{D}$ compatible with $\rho$. More precisely, denote by
\begin{align*}
\mathscr{L}_{x,\bullet}: \mathscr{L} = \mathscr{L}_{x,1} \supsetneq \dots \supsetneq \mathscr{L}_{x,n_x+1} = \{ 0\}
\end{align*}
the quasi-parabolic structure on $\mathscr{L}$ at the puncture $x \in \boldsymbol{D}$, where $n_x$ is a positive integer, and we have $(\rho(c_x))(\mathscr{L}_{x,i}) \subseteq \mathscr{L}_{x,i}$. Since $\rho(c_x) \in {\rm GL}(\mathscr{L})$, we indeed have $(\rho(c_x))(\mathscr{L}_{x,i}) =\mathscr{L}_{x,i}$.

Given a collection $\boldsymbol{P}:=\{P_{x}, x \in \boldsymbol{D}\}$ of parabolic subgroups in ${\rm GL}(\mathscr{L})$, a quasi-parabolic local system $\mathscr{L}_{\bullet}$ is of \emph{type $\boldsymbol{P}$} (resp. \emph{type $[\boldsymbol{P}]$}) if the quasi-parabolic structure $\mathscr{L}_{x,\bullet}$ is of type $P_x$ (resp. $[P_x]$) for each $x \in \boldsymbol{D}$. Moreover, denote by $\lambda_x = (\lambda_{x,1},\dots, \lambda_{x,n_x})$ the associated partition, and $\boldsymbol\lambda=\{\lambda_x, x \in \boldsymbol{D}\}$ the collection of partitions.

Two quasi-parabolic local systems $\mathscr{L}_\bullet$ and $\mathscr{L}'_\bullet$ are \emph{isomorphic} if there exists an isomorphism $\phi: (\mathscr{L},\rho) \rightarrow (\mathscr{L}',\rho')$ of local systems such that it is compatible with representations and preserves the quasi-parabolic structures. More precisely, we have
\begin{align*}
    \phi \circ \rho = \rho' \circ \phi, \quad \phi(\mathscr{L}_{x,i}) = \mathscr{L}'_{x,i}.
\end{align*}
\end{defn}

In this section, we are mostly interested in quasi-parabolic local systems of type $[\boldsymbol{P}]$ and thus we suppose that the maximal torus $T$ of diagonal matrices is included in every $P_x$ for $x \in \boldsymbol{D}$.

Since we always consider isomorphism classes of quasi-parabolic local systems, we can fix a vector space $\mathscr{L}$ together with a fixed basis. Therefore, a local system $(\mathscr{L},\rho)$ can be considered as a vector space $\mathscr{L}=k^n$ together with a representation $\rho : \pi_1(X_{\boldsymbol{D}}) \rightarrow {\rm GL}_n(k)$.

\begin{lem}\label{lem_quasi_para_and_orbit}
Given a collection $\boldsymbol{P}$ of parabolic subgroups in ${\rm GL}(\mathscr{L})$, there is a one-to-one correspondence between isomorphism classes of quasi-parabolic local systems of type $[\boldsymbol{P}]$ on $X_{\boldsymbol{D}}$ and ${\rm GL}(\mathscr{L})$-orbits of fundamental group representations in ${\rm Hom}(\pi_1(X_{\boldsymbol{D}}), {\rm GL}(\mathscr{L}))$ such that each orbit includes a point $[\rho]$ satisfying the condition that $\rho(c_x)$ is conjugate to an element in $P_{x}$ for $x \in \boldsymbol{D}$.
\end{lem}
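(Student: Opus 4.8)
The plan is to produce explicit maps in both directions between the two sets and to check that they are mutually inverse. Throughout I fix the underlying vector space $\mathscr{L}=k^n$ with a chosen basis, so that a local system is simply a representation $\rho\in{\rm Hom}(\pi_1(X_{\boldsymbol{D}}),{\rm GL}(\mathscr{L}))$ and an isomorphism of local systems is conjugation by an element of ${\rm GL}(\mathscr{L})$. The whole statement then becomes a comparison of two quotients: isomorphism classes of pairs (representation, compatible flags) on one side, and plain ${\rm GL}(\mathscr{L})$-conjugacy classes of representations, cut out by the monodromy condition, on the other.

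First I would define the forward map, sending an isomorphism class of a quasi-parabolic local system $\mathscr{L}_{\bullet}$ to the ${\rm GL}(\mathscr{L})$-orbit of its underlying representation $\rho$. This is well defined on isomorphism classes, since an isomorphism $\phi$ satisfies $\phi\circ\rho=\rho'\circ\phi$ and hence places $\rho$ and $\rho'$ in the same orbit. To see that the image lands in the prescribed set of orbits, I use that the compatibility condition $\rho(c_x)(\mathscr{L}_{x,i})=\mathscr{L}_{x,i}$ says precisely $\rho(c_x)\in{\rm Aut}(\mathscr{L}_{x,\bullet})$; as $\mathscr{L}_{x,\bullet}$ is of type $[P_x]$, the group ${\rm Aut}(\mathscr{L}_{x,\bullet})$ is conjugate to $P_x$, so $\rho(c_x)$ is conjugate to an element of $P_x$.

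Next I would construct a candidate inverse. Given an orbit containing a representative $\rho$ with $\rho(c_x)=g_x p_x g_x^{-1}$ for some $p_x\in P_x$, I write $P_x={\rm Aut}(V^0_{x,\bullet})$ for the flag fixed by $P_x$ and set $\mathscr{L}_{x,\bullet}:=g_x\cdot V^0_{x,\bullet}$. Then ${\rm Aut}(\mathscr{L}_{x,\bullet})=g_x P_x g_x^{-1}$ contains $\rho(c_x)$, so $\rho(c_x)$ preserves $\mathscr{L}_{x,\bullet}$ and the pair $(\rho,\{\mathscr{L}_{x,\bullet}\}_{x\in\boldsymbol{D}})$ is a quasi-parabolic local system of type $[\boldsymbol{P}]$. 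Composing forward after backward visibly recovers the orbit, so the remaining content of the lemma is that the backward assignment descends to isomorphism classes and is a two-sided inverse.

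The main obstacle is exactly this descent, i.e.\ injectivity of the forward map together with independence of the backward construction from its two auxiliary choices, the representative $\rho$ in its orbit and the $\rho(c_x)$-invariant flag $\mathscr{L}_{x,\bullet}$. Independence from the representative is routine: replacing $\rho$ by $h\rho h^{-1}$ transports each flag by $h$ and yields an isomorphic quasi-parabolic local system. The delicate point is the flag: I must show that any two $\rho(c_x)$-invariant flags of type $[P_x]$ are interchanged by an automorphism of the local system, that is, by an element of ${\rm GL}(\mathscr{L})$ commuting with all of $\rho(\pi_1(X_{\boldsymbol{D}}))$, so that different admissible flag choices produce isomorphic objects. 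I expect this to be the heart of the argument and the place where the precise hypotheses on the type $[\boldsymbol{P}]$ and on the admissible monodromies $\rho(c_x)$ enter; indeed, for a regular monodromy with several invariant flags of the given type the centralizer of $\rho$ need not act transitively on them, so the reduction of the invariant flag to a single isomorphism class is the step I would scrutinize most carefully. Once it is settled, the two maps are manifestly mutually inverse and the stated one-to-one correspondence follows.
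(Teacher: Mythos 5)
Your two assignments are exactly the ones in the paper's own proof: the paper likewise sends a quasi-parabolic local system to its underlying representation, and, given $\rho$ with $\rho(c_x)\in g_xP_xg_x^{-1}$, equips $\rho$ with the unique filtration whose automorphism group is $g_xP_xg_x^{-1}$. But the paper stops there: it never verifies that the backward construction is independent of the choice of $g_x$, nor that the forward (forgetful) map is injective on isomorphism classes --- precisely the descent step you isolated as the heart of the matter. So the paper contains no idea that closes your gap; your proposal is in fact as complete as the published argument, with the difference that you state the missing step explicitly.

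Moreover, your skepticism about that step is justified: it cannot be closed, because the statement read literally is too strong. First note that, by the Borel covering theorem (invoked by the paper itself in the proof of Proposition \ref{prop_para_and_orbit}), every element of ${\rm GL}_n(k)$ is conjugate into $P_x$, so the side condition on orbits is vacuous and the lemma would identify isomorphism classes of quasi-parabolic local systems of type $[\boldsymbol{P}]$ with \emph{all} conjugacy classes of representations. Now take $g=1$, $\boldsymbol{D}=\{x\}$, $n=2$, $P_x=B$ the Borel subgroup, and $\rho(a_1)={\rm diag}(1,-1)$, $\rho(b_1)={\rm id}$, so that $\rho(c_x)={\rm id}$: every line $\ell\subseteq k^2$ gives a compatible quasi-parabolic structure $k^2\supsetneq\ell\supsetneq 0$ of type $[B]$, while the centralizer of $\rho$ is the diagonal torus, whose orbits on $\mathbb{P}^1$ are $\{\langle e_1\rangle\}$, $\{\langle e_2\rangle\}$ and the generic locus; hence three pairwise non-isomorphic quasi-parabolic local systems lie over a single representation orbit. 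The forward map is therefore a surjection but not a bijection, exactly for the reason you predicted: the centralizer of $\rho$ --- not merely of $\rho(c_x)$ --- need not act transitively on the $\rho(c_x)$-invariant flags of type $[P_x]$. What survives, and what the sequel actually uses, is this surjective correspondence together with the fact that the quiver model of Construction \ref{cons_new_quiv} remembers the flag through the factorization $\rho(c_x)=\phi_{c_{x,2}}\phi_{c_{x,1}}$ (the filtration at $x$ being the image under $\phi_{c_{x,1}}^{-1}$ of the standard flag of $P_x$); a true bijection at the level of this lemma would require keeping the flags as part of the data on the representation side rather than passing to bare conjugacy classes.
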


\begin{proof}
By Definition \ref{defn_quasi_parab_lc}, it is easy to find that isomorphism classes of quasi-parabolic local systems (with underlying vector space $\mathscr{L}$) correspond to orbits of representations in ${\rm Hom}(\pi_1(X_{\boldsymbol{D}}), {\rm GL}(\mathscr{L}))$. The condition $\rho(c_x)(\mathscr{L}_{x,i}) = \mathscr{L}_{x,i}$ implies that the element $\rho(c_x)$ is conjugate to an element in $P_{x}$. On the other hand, given a representation $\rho \in {\rm Hom}(\pi_1(X_{\boldsymbol{D}}), {\rm GL}(\mathscr{L}))$ such that $\rho(c_x) \in g_x P_{x} g_x^{-1}$ for some $g_x \in {\rm GL}(\mathscr{L})$. Note that the parabolic subgroup $g_x P_{x} g_x^{-1}$ determines a unique quasi-parabolic structure of $\mathscr{L}$ on the puncture $x$, and $g_x P_x g^{-1}_x$ is regarded as the automorphism group of the filtration. Clearly, $P_x$ is conjugate to $ g_x P_x g^{-1}_x$. Thus, we obtain a quasi-parabolic local system of type $[\boldsymbol{P}]$.
\end{proof}

With the same idea as in \S\ref{subsect_rep_fund_grp}, we can construct a quiver such that representations of the fundamental group $\pi_1(X_{\boldsymbol{D}})$ are equivalent to representations of such a quiver, and then we obtain a correspondence between quasi-parabolic local systems on $X_{\boldsymbol{D}}$ and quiver representations. This quiver will be constructed after we introduce stability conditions because it is important for our purposes that this one-to-one correspondence also preserves the stability conditions.
\begin{center}
\begin{tikzcd}
\fbox{quasi-parabolic local systems} \arrow[dd, dotted] \arrow[rd, "\text{Lemma }\ref{lem_quasi_para_and_orbit}" description] & \\
& \fbox{representations of fundamental groups} \\
\fbox{quiver representations} \arrow[uu, dotted] \arrow[ru,"\text{Proposition }\ref{prop_para_and_orbit}" description] &
\end{tikzcd}
\end{center}

\subsection{Filtered (Parabolic) Local Systems}
Now we move to parabolic structures and stability conditions. A \emph{parabolic structure} on $V$ is a quasi-parabolic structure
\begin{align*}
V = V_1 \supsetneq V_2 \supsetneq \dots \supsetneq V_{n'+1}  = \{0\},
\end{align*}
and each $V_i$ is associated to a rational number $\theta_i$ for $1 \leq i \leq n'$, where $\theta_i$ are distinct numbers. The collection $\{\theta_i, 1 \leq i \leq n' \}$ is called the \emph{weights}. In sum, a parabolic structure on $V$ can be regarded as a weighted filtration.

\begin{defn}\label{defn_fls}
A \emph{filtered (parabolic) local system $\mathscr{L}_{\bullet}$} is a local system $(\mathscr{L},\rho)$ equipped with a parabolic structure for each puncture $x \in \boldsymbol{D}$. More precisely, for each puncture $x \in \boldsymbol{D}$, denote by
\begin{align*}
\mathscr{L}_{x,\bullet}: \mathscr{L} = \mathscr{L}_{x,1} \supsetneq \dots \supsetneq \mathscr{L}_{x,n_x+1} = \{0\}
\end{align*}
the quasi-parabolic structure on the puncture $x$, where $n_x$ is a positive integer, and denote by $\theta_{x,i}$ the rational number associated to $\mathscr{L}_{x,i}$. Let $\boldsymbol{P}=\{P_x, x \in \boldsymbol{D}\}$ be a collection of parabolic subgroups in ${\rm GL}(\mathscr{L})$. If $P_x = {\rm Aut}(\mathscr{L}_{x,\bullet})$ (resp. $P_x$ is conjugate to ${\rm Aut}(\mathscr{L}_{x,\bullet})$), then it is called \emph{of type $\boldsymbol{P}$} (resp. \emph{$[\boldsymbol{P}]$}) \emph{with weights $\boldsymbol\theta$}, where $\boldsymbol\theta=\{\theta_{x,i}, x \in \boldsymbol{D}, 1 \leq i \leq n_x\}$. For each $x \in \boldsymbol{D}$, we have $\theta_{x,i} \neq \theta_{x,j}$ whenever $i \neq j$.

Two filtered local systems $\mathscr{L}_\bullet$ and $\mathscr{L}'_\bullet$ are \emph{isomorphic} if there exists an isomorphism $\phi: \mathscr{L}_\bullet \rightarrow \mathscr{L}'_\bullet$ of quasi-parabolic local systems such that the corresponding weights are the same $\theta_{x,i} = \theta'_{x,i}$.
\end{defn}

\begin{rem}
The definition of filtered (parabolic) local systems is similar to that of parabolic bundles introduced in \cite{MS80}. However, in this paper, we use the terminology \emph{filtered local system} rather than \emph{parabolic local system} because Simpson first defined and studied this type of local systems in \cite{Sim90} and used the notion ``filtered local system". Moreover, in \cite{MS80}, the weights for parabolic bundles are assumed to be non-negative rational numbers in $[0,1)$ in increasing order, i.e.
\begin{align*}
0 \leq \theta_{x,1} < \dots < \theta_{x,n_x} < 1.
\end{align*}
However, when we consider filtered local systems, it is not necessary to make these assumptions. The reason for this setup comes from the result of tame nonabelian Hodge correspondence on noncompact curves, where the weights of a filtered local system are determined by the real part of the eigenvalues of the residue of the corresponding Higgs field (see \cite[\S 5]{Sim90}).
\end{rem}

\begin{defn}
Given a filtered local system $\mathscr{L}_\bullet$ of type $[\boldsymbol{P}]$ with weights $\boldsymbol\theta$, denote by $\boldsymbol\lambda$ the associated collection of partitions. The \emph{degree} of $\mathscr{L}_\bullet$ is defined as
\begin{align*}
    \deg^{\rm loc} (\mathscr{L}_\bullet):= \sum_{x \in \boldsymbol{D}} \sum_{i=1}^{n_x} \theta_{x,i} \cdot \dim(\mathscr{L}_{x,i}/\mathscr{L}_{x,i+1}) = \sum_{x \in \boldsymbol{D}} \sum_{i=1}^{n_x} \theta_{x,i} \lambda_{x,i}.
\end{align*}
\end{defn}

Similar to the definition of parabolic sub-bundles \cite[Definition 1.7]{MS80}, we give the definition of filtered sub-local systems.
\begin{defn}
A filtered local system $\mathscr{L}'_\bullet$ is a \emph{filtered sub-local system} of a filtered local system $\mathscr{L}_\bullet$ (of type $[\boldsymbol{P}]$ with weights $\boldsymbol\theta$), if
\begin{itemize}
\item the underlying local system $(\mathscr{L}',\rho')$ is a sub-local system of $(\mathscr{L},\rho)$. More precisely, $\mathscr{L}'$ is a subspace of $\mathscr{L}$ which is preserved by $\rho$ and $\rho'$ is the restriction $\rho|_{\mathscr{L}'}$;
\item for each puncture $x \in \boldsymbol{D}$, given $1 \leq j \leq n'(x)$, let $i$ be the largest integer such that $\mathscr{L}'_{x,j} \subseteq \mathscr{L}_{x,i}$, and then $\theta'_{x,j} = \theta_{x,i}$.
\end{itemize}
\end{defn}

\begin{defn}\label{defn_fls_stab}
A filtered local system $\mathscr{L}_\bullet$ is \emph{semistable} (resp. \emph{stable}), if for any nontrivial proper filtered sub-local system $\mathscr{L}'_\bullet \subseteq \mathscr{L}_\bullet$, we have
\begin{align*}
\frac{\deg^{\rm loc} (\mathscr{L}'_\bullet)}{{\rm rk}(\mathscr{L}'_\bullet)} \leq \frac{\deg^{\rm loc} (\mathscr{L}_\bullet)}{{\rm rk}(\mathscr{L}_\bullet)} \quad (\text{resp. } <).
\end{align*}
\end{defn}

\begin{exmp}\label{exmp_sub_local_system}
In this example, we calculate the degree of nontrivial filtered sub-local systems. Let $\mathscr{L}'_\bullet \subseteq \mathscr{L}_\bullet$ be a filtered sub-local system. Note that $0 \subsetneq \mathscr{L}' \subsetneq \mathscr{L}$ is a filtration of vector spaces. Then, the degree of $\mathscr{L}'_\bullet$ is
\begin{align*}
\deg^{\rm loc}(\mathscr{L}'_\bullet) = \sum_{x \in \boldsymbol{D}} \sum_{i=1}^{n_x} \theta_{x,i} \cdot \dim( \mathscr{L}' \cap \mathscr{L}_{x,i} / \mathscr{L}_{x,i+1}  ).
\end{align*}
Moreover, we denote by $P_{\mathscr{L}'} \subseteq {\rm GL}(\mathscr{L})$ the automorphism group of the filtration $\mathscr{L} \supsetneq \mathscr{L}' \supsetneq 0$, which is clearly a parabolic subgroup.
\end{exmp}

\begin{lem}\label{lem_par_deg_zero}
Let $\mathscr{L}_\bullet$ be a filtered local system of type $[\boldsymbol{P}]$ with weights $\boldsymbol\theta$. Suppose that $\deg^{\rm loc}(\mathscr{L}_\bullet) = 0$. Then, $\mathscr{L}_\bullet$ is semistable (stable) if and only if for any nontrivial proper filtered sub-local system $\mathscr{L}'_\bullet \subseteq \mathscr{L}_\bullet$, we have
\begin{align*}
    \deg^{\rm loc}(\mathscr{L}'_\bullet) \leq 0 \quad (\text{resp. } <).
\end{align*}
\end{lem}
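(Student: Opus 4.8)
The plan is to deduce this directly from the defining slope inequality in Definition \ref{defn_fls_stab} by substituting the hypothesis $\deg^{\rm loc}(\mathscr{L}_\bullet)=0$. The only genuine content is the observation that the rank of a nontrivial filtered sub-local system is a positive integer, so that clearing the denominator preserves both the inequality and its strict form.

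First I would recall that, by Definition \ref{defn_fls_stab}, semistability (resp. stability) of $\mathscr{L}_\bullet$ means that for every nontrivial proper filtered sub-local system $\mathscr{L}'_\bullet \subseteq \mathscr{L}_\bullet$ one has
\begin{align*}
\frac{\deg^{\rm loc}(\mathscr{L}'_\bullet)}{{\rm rk}(\mathscr{L}'_\bullet)} \leq \frac{\deg^{\rm loc}(\mathscr{L}_\bullet)}{{\rm rk}(\mathscr{L}_\bullet)} \quad (\text{resp. } <).
\end{align*}
Under the standing assumption $\deg^{\rm loc}(\mathscr{L}_\bullet)=0$, the right-hand side equals $0$, so the condition reads $\deg^{\rm loc}(\mathscr{L}'_\bullet)/{\rm rk}(\mathscr{L}'_\bullet) \leq 0$ (resp. $<0$).

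Next I would note that since $\mathscr{L}'_\bullet$ is a nontrivial sub-local system, its underlying space $\mathscr{L}'$ is a nonzero subspace of $\mathscr{L}$, whence ${\rm rk}(\mathscr{L}'_\bullet) \geq 1 > 0$. Multiplying the slope inequality by the positive quantity ${\rm rk}(\mathscr{L}'_\bullet)$ therefore preserves its direction (and strictness in the stable case), yielding $\deg^{\rm loc}(\mathscr{L}'_\bullet) \leq 0$ (resp. $<0$). Conversely, dividing $\deg^{\rm loc}(\mathscr{L}'_\bullet) \leq 0$ (resp. $<0$) by the same positive rank recovers the slope inequality with right-hand side $0$, which is exactly (semi)stability in view of $\deg^{\rm loc}(\mathscr{L}_\bullet)=0$. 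This establishes the equivalence in both the semistable and the stable case.

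There is no substantive obstacle here: the statement is a reformulation of the defining inequality specialized to total degree zero, and the entire argument rests on the positivity of ${\rm rk}(\mathscr{L}'_\bullet)$. The only point worth stating explicitly is that the passage between the two formulations is reversible, so that the equivalence holds in both directions simultaneously for semistability and for stability.
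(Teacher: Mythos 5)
Your proof is correct and follows essentially the same route as the paper's: substitute $\deg^{\rm loc}(\mathscr{L}_\bullet)=0$ into the slope inequality of Definition \ref{defn_fls_stab} so that the right-hand side vanishes, then clear the positive denominator ${\rm rk}(\mathscr{L}'_\bullet)$, noting the step is reversible. The paper's proof is the same computation, stated slightly more tersely; there is nothing missing in your version.
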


\begin{proof}
If $\deg^{\rm loc}(\mathscr{L}_\bullet) = 0$, then $\mathscr{L}_\bullet$ is semistable if and only if we have
\begin{align*}
\frac{\deg^{\rm loc} (\mathscr{L}'_\bullet)}{{\rm rk}(\mathscr{L}'_\bullet)} \leq \frac{\deg^{\rm loc} (\mathscr{L}_\bullet)}{{\rm rk}(\mathscr{L}_\bullet)} =0
\end{align*}
for any nontrivial filtered sub-local system of $\mathscr{L}'_\bullet$. Therefore,
\begin{align*}
\deg^{\rm loc}(\mathscr{L}'_\bullet) \leq 0.
\end{align*}
The proof for the stable case is exactly the same.
\end{proof}

\subsection{The Quiver $\mathscr{Q}^{\boldsymbol{D}}$}

In this subsection, we construct a new quiver $\mathscr{Q}^{\boldsymbol{D}}$ and prove that there is a one-to-one correspondence between filtered local systems and certain representations of this quiver. Furthermore, this correspondence also preserves the stability condition, which will be proved in \S\ref{subsect_equiv_stab_local}. The underlying vector space of a quasi-parabolic local system in this subsection is always considered as $k^n$ under an isomorphism.

\begin{con}\label{cons_new_quiv}
We first define a quiver $\mathscr{Q}^{\boldsymbol{D}} = (\mathscr{Q}^{\boldsymbol{D}}_0,\mathscr{Q}^{\boldsymbol{D}}_1)$ as follows:
\begin{align*}
\mathscr{Q}_0^{\boldsymbol{D}} & = \{v_0, v_{x}, x \in \boldsymbol{D}\}, \\
\mathscr{Q}_1^{\boldsymbol{D}} & = \{ a_i,b_j: v_0 \rightarrow v_0, 1 \leq i,j \leq g, \\
& c_{x,1}: v_{0} \rightarrow v_{x}, \text{ } c_{x,2}: v_x \rightarrow v_0, \text{ } x \in \boldsymbol{D}\}.
\end{align*}
More precisely, for each point $x \in \boldsymbol{D}$, we have
\begin{center}
\begin{tikzcd}
v_0 \arrow[rr, bend right, "c_{x,1}" description] & & v_x \arrow[ll, bend right, "c_{x,2}" description]
\end{tikzcd}
\end{center}
and denote by $c_x:=c_{x,2}c_{x,1}$ the loop with source and target $v_0$\footnote{The quiver we are considering has some similarities to the ``comet-shaped quivers'' considered in \cite{Bal23,CBS06,HLRV11}. However, in these papers, the authors only fix the monodromies for each puncture, without any parabolic structure, hence their moduli spaces exist as affine GIT quotients. We thank Markus Reineke for pointing out these wonderful references.}. Define a relation
\begin{align*}
    \mathscr{I}^{\boldsymbol{D}}:=(\prod_{i=1}^g [a_i,b_i]) \cdot (\prod_{x\in \boldsymbol{D}} c_x) - v_0 .
\end{align*}
Then, the $(\prod_{v \in \mathscr{Q}^{\boldsymbol{D}}_0} {\rm GL}_n(k))$-orbits in $\mathscr{R}(\mathscr{Q}^{\boldsymbol{D}},\mathscr{I}^{\boldsymbol{D}},n)$ correspond to rank $n$ quiver representations of $\mathscr{Q}^{\boldsymbol{D}}$ satisfying the relation $\mathscr{I}^{\boldsymbol{D}}$. 

We fix a collection of parabolic subgroups $\boldsymbol{P}=\{P_x, x \in \boldsymbol{D}\}$ in ${\rm GL}_n(k)$. For each $x\in\boldsymbol{D}$, denote by $L_{x}$ the Levi subgroup of $P_x$. We construct a quasi-projective variety $\mathscr{R}(\mathscr{Q}^{\boldsymbol{D}},\mathscr{I}^{\boldsymbol{D}},[\boldsymbol{P}])$ as follows. We first define a locally closed subset
\begin{align*}
    \mathscr{R}(\mathscr{Q}^{\boldsymbol{D}},\boldsymbol{P}) \subseteq \mathscr{R}(\mathscr{Q}^{\boldsymbol{D}},n),
\end{align*}
whose elements $\phi= (\phi_a)_{a \in \mathscr{Q}^{\boldsymbol{D}}_1}$ satisfy the conditions that $\phi_a \in {\rm GL}_n(k)$ for every $a \in \mathscr{Q}_1^{\boldsymbol{D}}$ and 
\begin{align*}
    \phi_{c_{x,1}} \in L_{x}, \quad \phi_{c_{x,2}} \in P_{x}
\end{align*}
for $x \in \boldsymbol{D}$. Consider the group $\prod_{x \in \boldsymbol{D}} G_x$, where $G_x = {\rm GL}_n(k)$, and we define a $(\prod_{x \in \boldsymbol{D}} G_x)$-action on $\mathscr{R}(\mathscr{Q}^{\boldsymbol{D}},n)$ as follows
\begin{align*}
    (g_x) \cdot (\phi_{a_i},\phi_{b_j},\phi_{c_{x,1}},\phi_{c_{x,2}}) := (\phi_{a_i},\phi_{b_j},  \phi_{c_{x,1}}g_x^{-1}, g_x \phi_{c_{x,2}}).
\end{align*}
Denote by $\mathscr{R}'(\mathscr{Q}^{\boldsymbol{D}},\boldsymbol{P})$ the fiber product
\begin{center}
\begin{tikzcd}
\mathscr{R}'(\mathscr{Q}^{\boldsymbol{D}},\boldsymbol{P}) \arrow[r, dotted] \arrow[d,dotted] & \mathscr{R}(\mathscr{Q}^{\boldsymbol{D}},\boldsymbol{P}) \arrow[d] \\
(\prod_{x \in \boldsymbol{D}} G_x) \times \mathscr{R}(\mathscr{Q}^{\boldsymbol{D}},n) \arrow[r] & \mathscr{R}(\mathscr{Q}^{\boldsymbol{D}},n) \ .
\end{tikzcd}
\end{center}
Clearly, $\mathscr{R}'(\mathscr{Q}^{\boldsymbol{D}},\boldsymbol{P})$ includes all tuples
\begin{align*}
    ( (g_x) , (\phi_{a_i},\phi_{b_j},\phi_{c_{x,1}},\phi_{c_{x,2}}) ) \in (\prod_{x \in \boldsymbol{D}} G_x) \times \mathscr{R}(\mathscr{Q}^{\boldsymbol{D}},n)
\end{align*}
such that
\begin{align*}
    \phi_{c_{x,1}} g^{-1}_x \in L_x, \ \ g_x \phi_{c_{x,2}} \in P_x.
\end{align*}
Therefore, the projection
\begin{align*}
    \mathscr{R}(\mathscr{Q}^{\boldsymbol{D}},[\boldsymbol{P}]) := \mathscr{R}'(\mathscr{Q}^{\boldsymbol{D}},\boldsymbol{P})|_{\mathscr{R}(\mathscr{Q}^{\boldsymbol{D}},n)}
\end{align*}
parametrizes tuples $(\phi_{a_i},\phi_{b_j},\phi_{c_{x,1}},\phi_{c_{x,2}})$ such that for each $x \in \boldsymbol{D}$, there exists $g_x \in G_x = {\rm GL}_n(k)$ satisfying 
\begin{align*}
    \phi_{c_{x,1}} g^{-1}_x \in L_x, \ \ g_x \phi_{c_{x,2}} \in P_x.
\end{align*}
Now we add the relation $\mathscr{I}^{\boldsymbol{D}}$ and obtain a closed subvariety $\mathscr{R}(\mathscr{Q}^{\boldsymbol{D}},\mathscr{I}^{\boldsymbol{D}},[\boldsymbol{P}]) \subseteq \mathscr{R}(\mathscr{Q}^{\boldsymbol{D}},[\boldsymbol{P}])$. Clearly, $\mathscr{R}(\mathscr{Q}^{\boldsymbol{D}},\mathscr{I}^{\boldsymbol{D}},[\boldsymbol{P}])$ is quasi-projective.

Now we introduce a reductive group $G_{\boldsymbol{P}}$ together with an action on $\mathscr{R}(\mathscr{Q}^{\boldsymbol{D}},\mathscr{I}^{\boldsymbol{D}},[\boldsymbol{P}])$. Define
\begin{align*}
    G_{\boldsymbol{P}} = \prod_{v \in \mathscr{Q}^{\boldsymbol{D}}_0} G_v := {\rm GL}_n(k) \times \prod_{x \in \boldsymbol{D}} L_{x},
\end{align*}
where $G_{v_0}= {\rm GL}_n(k)$ and $G_{v_x} = L_{x}$. As a subgroup of $\prod_{v \in \mathscr{Q}^{\boldsymbol{D}}_0} {\rm GL}_n(k)$, we have a natural action of $G_{\boldsymbol{P}}$ on $\mathscr{R}(\mathscr{Q}^{\boldsymbol{D}},n)$. Note that the $(\prod_{x \in \boldsymbol{D}} L_x)$-action on $\mathscr{R}(\mathscr{Q}^{\boldsymbol{D}},\boldsymbol{P})$ is given by
\begin{align*}
    (g_{v_x}) \cdot (\phi_{a_i}, \phi_{b_j}, \phi_{c_{x,1}}, \phi_{c_{x,2}}) = (\phi_{a_i}, \phi_{b_j}, g_{v_x} \phi_{c_{x,1}} , \phi_{c_{x,2}} g_{v_x}^{-1}),
\end{align*}
where $\phi_{c_{x,1}} \in L_x$, $\phi_{c_{x,2}} \in P_x$ and $g_{v_x} \in L_x$, which is well-defined. Thus, we obtain a well-defined $G_{\boldsymbol{P}}$-action on $\mathscr{R}(\mathscr{Q}^{\boldsymbol{D}},\mathscr{I}^{\boldsymbol{D}},[\boldsymbol{P}])$.

At the end of the construction, we would like to emphasize that the group $\prod_{x \in \boldsymbol{D}} G_x$ is used to construct $\mathscr{R}(\mathscr{Q}^{\boldsymbol{D}},\mathscr{I}^{\boldsymbol{D}},[\boldsymbol{P}])$, while the group $G_{\boldsymbol{P}}$ will be used later to introduce stability conditions.
\end{con}

\begin{rem}
In \cite{AD20}, the authors studied quiver representations with parabolic structures by constructing a new quiver, called the \emph{ladder quiver}. Their approach is interesting, but it is hard to apply it directly to our case. The main problem is that the quiver we consider includes loops, which comes from fundamental group representations.
\end{rem}

\begin{prop}\label{prop_para_and_orbit}
There is a one-to-one correspondence between $G_{\boldsymbol{P}}$-orbits in $\mathscr{R}(\mathscr{Q}^{\boldsymbol{D}},\mathscr{I}^{\boldsymbol{D}},[\boldsymbol{P}])$ and isomorphism classes of quasi-parabolic local systems of type $[\boldsymbol{P}]$ on $X_{\boldsymbol{D}}$.
\end{prop}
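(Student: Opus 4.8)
The plan is to exhibit explicit mutually inverse maps between $G_{\boldsymbol{P}}$-orbits and isomorphism classes, and to check that they are well defined on orbits (resp.\ isomorphism classes). For each $x$ let $F_{x,\bullet}$ denote a flag with $\mathrm{Aut}(F_{x,\bullet})=P_x$. Starting from a point $(\phi_{a_i},\phi_{b_j},\phi_{c_{x,1}},\phi_{c_{x,2}})\in\mathscr{R}(\mathscr{Q}^{\boldsymbol{D}},\mathscr{I}^{\boldsymbol{D}},[\boldsymbol{P}])$, I would first read off the underlying local system by setting $\rho(a_i)=\phi_{a_i}$, $\rho(b_j)=\phi_{b_j}$ and $\rho(c_x)=\phi_{c_{x,2}}\phi_{c_{x,1}}$. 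Since every $\phi_a$ is invertible and the relation $\mathscr{I}^{\boldsymbol{D}}$ reads exactly as $\prod_i[\phi_{a_i},\phi_{b_i}]\prod_x\phi_{c_x}=\mathrm{id}$, this defines a genuine representation $\rho\colon\pi_1(X_{\boldsymbol{D}})\to\mathrm{GL}_n(k)$, i.e.\ a local system on $X_{\boldsymbol{D}}$. To produce the parabolic data I use the defining condition of $\mathscr{R}(\mathscr{Q}^{\boldsymbol{D}},[\boldsymbol{P}])$: for each $x$ there is $g_x$ with $\phi_{c_{x,1}}g_x^{-1}\in L_x$ and $g_x\phi_{c_{x,2}}\in P_x$. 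Then $g_x\rho(c_x)g_x^{-1}=(g_x\phi_{c_{x,2}})(\phi_{c_{x,1}}g_x^{-1})\in P_x$, so $\rho(c_x)$ preserves the flag $\mathscr{L}_{x,\bullet}:=g_x^{-1}F_{x,\bullet}$, whose automorphism group $g_x^{-1}P_xg_x$ is of type $[P_x]$. This flag is independent of the admissible choice of $g_x$ (any two differ by left multiplication by an element of $L_x$, which fixes $F_{x,\bullet}$), so by Lemma \ref{lem_quasi_para_and_orbit} we obtain a well-defined quasi-parabolic local system of type $[\boldsymbol{P}]$.

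Next I would verify that this assignment is constant on $G_{\boldsymbol{P}}$-orbits. Writing an element of $G_{\boldsymbol{P}}$ as $(h,(\ell_x))$ with $h\in\mathrm{GL}_n(k)$ and $\ell_x\in L_x$, the induced transformation sends $\phi_{a_i}\mapsto h\phi_{a_i}h^{-1}$ (and likewise for $b_j$), $\phi_{c_{x,1}}\mapsto\ell_x\phi_{c_{x,1}}h^{-1}$, and $\phi_{c_{x,2}}\mapsto h\phi_{c_{x,2}}\ell_x^{-1}$. Hence $\rho\mapsto h\rho h^{-1}$, while the flag transforms as $\mathscr{L}_{x,\bullet}\mapsto h\mathscr{L}_{x,\bullet}$ because $\ell_x$ fixes $F_{x,\bullet}$. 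Thus $h$ defines an isomorphism of quasi-parabolic local systems in the sense of Definition \ref{defn_quasi_parab_lc}, and the Levi factors $\ell_x$ leave the output unchanged, giving a well-defined map from orbits to isomorphism classes.

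For the inverse I would start from a quasi-parabolic local system $(\mathscr{L}=k^n,\rho)$ with flags $\mathscr{L}_{x,\bullet}$ of type $[P_x]$, choose for each $x$ an element $m_x\in\mathrm{GL}_n(k)$ with $m_x^{-1}F_{x,\bullet}=\mathscr{L}_{x,\bullet}$ (possible precisely because the flag is of type $[P_x]$), and set $\phi_{a_i}=\rho(a_i)$, $\phi_{b_j}=\rho(b_j)$, $\phi_{c_{x,1}}=m_x$, $\phi_{c_{x,2}}=\rho(c_x)m_x^{-1}$. The relation $\mathscr{I}^{\boldsymbol{D}}$ holds because $\rho$ is a representation, and taking $g_x=m_x$ verifies membership in $\mathscr{R}(\mathscr{Q}^{\boldsymbol{D}},\mathscr{I}^{\boldsymbol{D}},[\boldsymbol{P}])$, since $m_x\rho(c_x)m_x^{-1}\in P_x$. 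Applying the forward map recovers $\rho$ and the flag $\phi_{c_{x,1}}^{-1}F_{x,\bullet}=m_x^{-1}F_{x,\bullet}=\mathscr{L}_{x,\bullet}$, so the composite in this order is the identity on isomorphism classes.

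The step I expect to be the main obstacle is showing that the correspondence is a bijection rather than merely a surjection: that two points with isomorphic associated quasi-parabolic local systems lie in a single $G_{\boldsymbol{P}}$-orbit, and dually that the orbit produced by the inverse does not depend on the trivializations $m_x$ (replacing $m_x$ by $p\,m_x$ with $p\in P_x$ keeps $\mathscr{L}_{x,\bullet}$ unchanged but alters $(\phi_{c_{x,1}},\phi_{c_{x,2}})$). The delicate point is that an isomorphism $\psi$ of quasi-parabolic local systems only forces the puncture-wise comparison element $\phi'_{c_{x,1}}\psi\,\phi_{c_{x,1}}^{-1}$ to lie in the \emph{parabolic} $P_x$, whereas the group acting at the vertex $v_x$ is its \emph{Levi} $L_x$. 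Reconciling these requires a careful use of the Levi decomposition $P_x=L_x\ltimes U_x$ together with the membership conditions $\phi_{c_{x,1}}\in L_x$, $\phi_{c_{x,2}}\in P_x$ that cut out the distinguished representatives in $\mathscr{R}(\mathscr{Q}^{\boldsymbol{D}},\boldsymbol{P})$; once this puncture-local matching is secured, combining it with the global conjugation by $h$ on the loops yields both the injectivity on orbits and the independence of choices, completing the bijection.
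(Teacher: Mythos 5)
Your forward construction (reading off $\rho$ from $\phi$, extracting the flag $g_x^{-1}F_{x,\bullet}$, and checking that the left $L_x$-ambiguity in $g_x$ is harmless) and your verification of constancy on $G_{\boldsymbol{P}}$-orbits agree with the paper's proof, which packages the flag bookkeeping into Lemma \ref{lem_quasi_para_and_orbit} plus the Borel covering theorem and then exhibits the equivariant surjection $\mathscr{R}(\mathscr{Q}^{\boldsymbol{D}},\mathscr{I}^{\boldsymbol{D}},[\boldsymbol{P}])\rightarrow{\rm Hom}(\pi_1(X_{\boldsymbol{D}}),{\rm GL}_n(k))$ together with the computation $\rho'=g_{v_0}\rho\, g_{v_0}^{-1}$. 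However, the step you defer at the end --- injectivity on orbits, equivalently independence of your inverse from replacing $m_x$ by $p\,m_x$ with $p\in P_x$ --- is a genuine gap, and the repair you sketch via the Levi decomposition does not go through. Concretely: take genus $1$, a single puncture $x$, $n=2$, $P_x=B$ the upper-triangular Borel with Levi $L_x=T$, and fix a regular $t\in T$ with $\det t=1$; since $t^{-1}$ has no eigenvalue $1$, any solution of $[\rho(a_1),\rho(b_1)]=t^{-1}$ gives a representation with $\rho(c_x)=t$ (automatically irreducible, though this is not needed below). Equip $\rho$ with the standard complete flag $F_\bullet$, ${\rm Aut}(F_\bullet)=B$. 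Your inverse with $m_x={\rm id}$ produces $\phi^{(1)}$ with $(\phi_{c_{x,1}},\phi_{c_{x,2}})=({\rm id},t)$, while $m_x=u$ for a nontrivial $u\in U_x$ (same flag, since $u^{-1}F_\bullet=F_\bullet$) produces $\phi^{(2)}$ with $(\phi_{c_{x,1}},\phi_{c_{x,2}})=(u,tu^{-1})$; the latter lies in $\mathscr{R}(\mathscr{Q}^{\boldsymbol{D}},\mathscr{I}^{\boldsymbol{D}},[\boldsymbol{P}])$ (witness $g_x=u$, as $utu^{-1}\in B$) and maps forward to the same quasi-parabolic local system, because the admissible $g_x$ lie in $Tu$ and again cut out the flag $F_\bullet$. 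Yet any $(g_{v_0},g_{v_x})\in{\rm GL}_2(k)\times T$ carrying $\phi^{(1)}$ to $\phi^{(2)}$ must satisfy $g_{v_x}\,g_{v_0}^{-1}=u$ and $g_{v_0}\,t\,g_{v_x}^{-1}=tu^{-1}$; substituting the first into the second and using that $g_{v_x}$ commutes with $t$ forces $u^{-1}t=tu^{-1}$, i.e.\ $[t,u]={\rm id}$, which is false. So two lifts of one and the same quasi-parabolic local system lie in different $G_{\boldsymbol{P}}$-orbits: the unipotent directions are exactly the ambiguity your Levi-decomposition plan was supposed to absorb, and they are not absorbed.

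You also cannot close this hole by consulting the paper: its proof stops at the equivariant surjection and the assertion of a one-to-one correspondence, never performing the puncture-local matching you correctly single out, and the identification in Lemma \ref{lem_quasi_para_and_orbit} (which, after the Borel covering theorem, forgets the flags entirely) is what lets the issue pass unexamined. Structurally, over a fixed irreducible $\rho$ the $G_{\boldsymbol{P}}$-orbits in the fibre are parametrized by $\prod_{x}L_x\backslash S_x$, where $S_x=\{g\in{\rm GL}_n(k)\ :\ g\rho(c_x)g^{-1}\in P_x\}$, whereas the quasi-parabolic data only sees $\prod_x P_x\backslash S_x$; the discrepancy $L_x\backslash P_x\cong U_x$ is nontrivial whenever some $P_x$ is proper, which is precisely what the example above realizes. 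Your instinct that the $P_x$-versus-$L_x$ mismatch is the crux is therefore exactly right, but with the space $\mathscr{R}(\mathscr{Q}^{\boldsymbol{D}},\mathscr{I}^{\boldsymbol{D}},[\boldsymbol{P}])$ and the group $G_{\boldsymbol{P}}$ as defined, that crux cannot be established by more careful bookkeeping; a complete argument would have to modify the setup (for instance, enlarge the vertex group at $v_x$ to $P_x$, or normalize away the $U_x$-freedom in $(\phi_{c_{x,1}},\phi_{c_{x,2}})$) rather than refine the Levi decomposition step.
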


\begin{proof}
By Borel covering theorem, given a parabolic subgroup $P$, any element in $G$ is conjugate to an element in $P$. Then, Lemma \ref{lem_quasi_para_and_orbit} shows that there is a one-to-one correspondence between isomorphism classes of quasi-parabolic local systems of type $[\boldsymbol{P}]$ and ${\rm GL}_n(k)$-orbits in ${\rm Hom}(\pi_1(X_{\boldsymbol{D}}) , {\rm GL}_n(k))$. Thus, it is equivalent to prove that ${\rm GL}_n(k)$-orbits in ${\rm Hom}(\pi_1(X_{\boldsymbol{D}}) , {\rm GL}_n(k))$ are in one-to-one correspondence with $G_{\boldsymbol{P}}$-orbits in $\mathscr{R}(\mathscr{Q}^{\boldsymbol{D}},\mathscr{I}^{\boldsymbol{D}},[\boldsymbol{P}])$.

Given a point $(\phi_a)_{a \in \mathscr{Q}^{\boldsymbol{D}}_1} \in \mathscr{R}(\mathscr{Q}^{\boldsymbol{D}},\mathscr{I}^{\boldsymbol{D}},[\boldsymbol{P}])$, we suppose that 
\begin{align*}
    \phi_{c_{x,1}} g_x^{-1} \in  L_{x}, \quad g_x \phi_{c_{x,2}} \in P_{x}
\end{align*}
with some $g_x \in {\rm GL}_n(k)$ for each $x \in \boldsymbol{D}$. We associate this element with a representation $\rho: \pi_1(X_{\boldsymbol{D}}) \rightarrow {\rm GL}_n(k)$ as follows
\begin{align*}
    \rho(a_i) = \phi_{a_i}, \quad \rho(b_j) = \phi_{b_j}, \quad \rho(c_x) = \phi_{c_x} = \phi_{c_{x,2}} \phi_{c_{x,1}}.
\end{align*}
Clearly,  $\rho(c_x) \in (g^{-1}_x) P_{x} (g^{-1}_x)^{-1}$ for each $x \in \boldsymbol{D}$, and then the representation $\rho$ corresponds to a quasi-parabolic local system of type $[\boldsymbol{P}]$. Moreover, this induces a natural surjection
\begin{align*}
    \mathscr{R}(\mathscr{L}^{\boldsymbol{D}},\mathscr{I}^{\boldsymbol{D}} , [\boldsymbol{P}]) \rightarrow {\rm Hom}(\pi_1(X_{\boldsymbol{D}}), {\rm GL}_n(k)).
\end{align*}

Now let $(g_v)_{v \in \mathscr{Q}^{\boldsymbol{D}}_0} \in G_{\boldsymbol{P}}$. We consider the point
\begin{align*}
    (\phi'_a)_{a \in \mathscr{Q}^{\boldsymbol{D}}_1} = (g_v)_{v \in \mathscr{Q}^{\boldsymbol{D}}_0} \cdot (\phi_a)_{a \in \mathscr{Q}^{\boldsymbol{D}}_1},
\end{align*}
and we have
\begin{align*}
    \phi'_{a_i} = g_{v_0} \phi_{a_i} g^{-1}_{v_0}, \ \phi'_{b_j} = g_{v_0} \phi_{b_j} g^{-1}_{v_0}, \ \phi'_{c_{x,1}} = g_{v_x} \phi_{c_{x,1}} g^{-1}_{v_0}, \ \phi'_{c_{x,2}} = g_{v_0} \phi_{c_{x,2}} g^{-1}_{v_x}.
\end{align*}
Denote by $\rho': \pi_1(X_{\boldsymbol{D}}) \rightarrow {\rm GL}_n(k)$ the corresponding representation of $(\phi'_a)_{a \in \mathscr{Q}^{\boldsymbol{D}}_1}$. Clearly,
\begin{align*}
    & \rho'(a_i) = \phi'_{a_i} = g_{v_0} \phi_{a_i} g^{-1}_{v_0} = g_{v_0} \rho(a_i) g^{-1}_{v_0},\\
    & \rho'(b_j) = \phi'_{b_j} = g_{v_0} \phi_{b_j} g^{-1}_{v_0} = g_{v_0} \rho(b_j) g^{-1}_{v_0}, \\
    & \rho'(c_x) = \phi'_{c_{x,2}} \phi'_{c_{x,1}} = (g_{v_0} \phi_{c_{x,2}} g^{-1}_{v_x}) (g_{v_x} \phi_{c_{x,1}} g^{-1}_{v_0}) = g_{v_0} \rho(c_x) g^{-1}_{v_0}.
\end{align*}
Therefore,
\begin{align*}
    g_{v_0} \rho g^{-1}_{v_0} = \rho'.
\end{align*}
In conclusion, we have a one-to-one correspondence between $G_{\boldsymbol{P}}$-orbits in $\mathscr{R}(\mathscr{Q}^{\boldsymbol{D}},\mathscr{I}^{\boldsymbol{D}},[\boldsymbol{P}])$ and ${\rm GL}_n(k)$-orbits in ${\rm Hom}(\pi_1(X_{\boldsymbol{D}}),{\rm GL}_n(k))$.
\end{proof}

\begin{exmp}\label{exmp_sub_lc_qrep}
Let $\mathscr{L}_\bullet$ be a quasi-parabolic local system of type $[\boldsymbol{P}]$, and denote by $\phi = (\phi_a)_{a \in \mathscr{Q}^{\boldsymbol{D}}_1} \in \mathscr{R}(\mathscr{Q}^{\boldsymbol{D}},\mathscr{I}^{\boldsymbol{D}},[\boldsymbol{P}])$ a point corresponding to $\mathscr{L}_\bullet$ by Proposition \ref{prop_para_and_orbit}.

Let $\mathscr{L}'_\bullet$ be a quasi-parabolic sub-local system of $\mathscr{L}_\bullet$. As a quasi-parabolic local system, $\mathscr{L}'_\bullet$ corresponds to a point in the representation space of $\mathscr{Q}^{\boldsymbol{D}}$ and denote it by $\phi'$. By the proof of Proposition \ref{prop_para_and_orbit}, it is clear that $\phi' = \phi|_{\mathscr{L}'}$. This property implies that $\phi_a \in P_{\mathscr{L}'}$ for each arrow $a$, where $P_{\mathscr{L}'}$ is defined by the subspace $\mathscr{L}' \subseteq \mathscr{L}$ as in Example \ref{exmp_sub_local_system}.

Now let $P \in {\rm GL}_n(k)$ be a parabolic subgroup and denote by $\lambda = (\lambda_1,\dots,\lambda_{n'})$ the associated partition of $P$. We suppose that $\phi_a \in P$ for all arrows $a$. As studied in \S\ref{subsect_qplc}, let
\begin{align*}
\mathscr{L} = \mathscr{L}_{1} \supsetneq \dots \supsetneq \mathscr{L}_{n'+1} = \{0\}
\end{align*}
be the filtration of $\mathscr{L}$ given by $P$. Since $\phi_a \in P$ for all arrows $a$, the restriction of the representation $\rho|_{\mathscr{L}_j}$ is well-defined. Therefore, the filtration of $\mathscr{L}$ induces a filtration of quasi-parabolic sub-local systems
\begin{align*}
\mathscr{L}_\bullet = \mathscr{L}_{1,\bullet} \supsetneq \dots \supsetneq \mathscr{L}_{n'+1,\bullet} = \{0\},
\end{align*}
and $(\mathscr{L}_j,\rho|_{\mathscr{L}_j})$ is the underlying local system of $\mathscr{L}_{j,\bullet}$.
\end{exmp}

\subsection{Equivalence of Stability Conditions}\label{subsect_equiv_stab_local}
We follow the same notations as in previous sections. Let $\boldsymbol{P}=\{P_x, x \in \boldsymbol{D}\}$ be a collection of parabolic subgroups. Let $P_{x} = L_{x} U_{x}$ be the Levi decomposition of the parabolic subgroup $P_{x}$, where $L_{x}$ is the Levi subgroup and $U_{x}$ is the unipotent group, and we write $L_{x} = \prod_{i=1}^{n_x} L_{x,i}$ as a product of general linear groups. Recall that
\begin{align*}
    G_{\boldsymbol{P}}={\rm GL}_{n}(k) \times \prod_{x \in \boldsymbol{D}} L_{x}.
\end{align*}
Let $\mu$ be a cocharacter of $G_{\boldsymbol{P}}$, i.e. $\mu=(\mu_v)_{v \in \mathscr{Q}^{\boldsymbol{D}}_0}$, where $\mu_{v_0}$ is a cocharacter of ${\rm GL}_n(k)$ and $\mu_{v_x}$ is a cocharacter of $L_{x}$. Since $T$ is included in $P_{x}$, all cocharacters $\mu_v$ can be regarded as cocharacters of the maximal torus $T$.

\begin{lem}\label{lem_pre_mu}
Let $\mu_1,\mu_2$ be two cocharacters of a maximal torus $T \subseteq {\rm GL}_n(k)$. Define
\begin{align*}
    & S_{12}:=\{g \in {\rm End}(k^n) \text{ } | \text{ } \lim_{t\rightarrow 0} \mu_{1}(t) \cdot g \cdot \mu_{2}^{-1}(t) \text{ exists.}\} \\
    & S_{21}:=\{g \in {\rm End}(k^n) \text{ } | \text{ } \lim_{t\rightarrow 0} \mu_{2}(t) \cdot g \cdot \mu_{1}^{-1}(t) \text{ exists.}\}
\end{align*}
Then, $S_{12} \cap {\rm GL}_n(k) \neq \emptyset$ and $S_{21} \cap {\rm GL}_n(k) \neq \emptyset$ if and only if $\mu_1 = \mu_2$.
\end{lem}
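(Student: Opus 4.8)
The plan is to reduce everything to a diagonal computation. Since $\mu_1$ and $\mu_2$ are cocharacters of the \emph{same} maximal torus $T$, I fix a basis in which $T$ is the diagonal torus and write $\mu_1(t) = \mathrm{diag}(t^{a_1},\dots,t^{a_n})$ and $\mu_2(t) = \mathrm{diag}(t^{b_1},\dots,t^{b_n})$ with $a_i,b_i \in \mathbb{Z}$. For $g=(g_{ij})\in \mathrm{End}(k^n)$ the $(i,j)$-entry of $\mu_1(t)\,g\,\mu_2^{-1}(t)$ is $t^{a_i-b_j}g_{ij}$, so the limit as $t\to 0$ exists precisely when $g_{ij}=0$ for every pair with $a_i<b_j$. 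Hence
\[
S_{12}=\{\,g : g_{ij}=0 \text{ whenever } a_i<b_j\,\}, \qquad S_{21}=\{\,g : g_{ij}=0 \text{ whenever } b_i<a_j\,\}.
\]
The \emph{if} direction is then immediate: if $\mu_1=\mu_2$ then $a_i=b_i$ and both $S_{12}$ and $S_{21}$ reduce to $\{\,g : g_{ij}=0 \text{ whenever } a_i<a_j\,\}$, whose invertible elements form the parabolic subgroup attached to $\mu_1$ and in particular contain the identity; so both sets meet $\mathrm{GL}_n(k)$.

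For the \emph{only if} direction I exploit invertibility through the determinant. If $g\in S_{12}\cap \mathrm{GL}_n(k)$ then $\det g=\sum_\sigma \mathrm{sgn}(\sigma)\prod_i g_{i\sigma(i)}\neq 0$, so some summand is nonzero; thus there is a permutation $\sigma$ with $g_{i\sigma(i)}\neq 0$ for all $i$, and the defining condition of $S_{12}$ forces $a_i\geq b_{\sigma(i)}$ for all $i$. Symmetrically, an element of $S_{21}\cap \mathrm{GL}_n(k)$ yields a permutation $\tau$ with $b_i\geq a_{\tau(i)}$ for all $i$. Summing the first family of inequalities gives $\sum_i a_i\geq \sum_i b_i$ and summing the second gives $\sum_i b_i\geq \sum_i a_i$, whence $\sum_i a_i=\sum_i b_i$. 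Feeding this equality back, every inequality $a_i\geq b_{\sigma(i)}$ must in fact be an equality, so $a_i=b_{\sigma(i)}$ for all $i$; that is, the multisets $\{a_i\}$ and $\{b_i\}$ of weights coincide.

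The main obstacle is the final step: upgrading equality of the weight multisets to the termwise equality $a_i=b_i$ that expresses $\mu_1=\mu_2$. The permutation $\sigma$ need not be the identity, so the diagonal argument on its own only yields that $\mu_1$ and $\mu_2$ are conjugate, i.e. share the same weights. To reach the literal equality I would invoke the standing normalization, inherited from the Hilbert--Mumford setup behind Proposition \ref{prop_king2.5}, that the cocharacters entering the numerical criterion are chosen in a common Weyl chamber, say with weakly increasing weights $a_1\leq\dots\leq a_n$ and $b_1\leq\dots\leq b_n$; two monotone integer sequences with equal underlying multisets agree termwise, giving $a_i=b_i$ and hence $\mu_1=\mu_2$. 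I expect the delicate point to be precisely the bookkeeping that legitimizes this normalization — checking that the cocharacters produced by the limit/closed-orbit analysis in the intended application can be placed in a common chamber — the diagonal estimates above being routine.
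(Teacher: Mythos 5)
Your diagonal reduction and the determinant--permutation argument are correct, and they establish precisely this: invertible elements of $S_{12}$ and $S_{21}$ force the weight multisets $\{a_i\}$ and $\{b_i\}$ to coincide, i.e.\ $\mu_1$ and $\mu_2$ are conjugate under the Weyl group. That is also all the paper's one-line proof (``applying the argument in \cite[Chapter 2, \S 2]{MFK94}'') can deliver, so up to that point you have reconstructed, in more detail, the cited argument. The final step, however, is a genuine gap, and your instinct that it is the delicate point is exactly right: there is no ``standing normalization'' placing the cocharacters in a common Weyl chamber. Proposition \ref{prop_king2.5} quantifies over \emph{all} cocharacters of the group, and where the lemma is used (the proof of Lemma \ref{lem_cochar_equal}, repeated for general $G$ in Proposition \ref{prop_stab_local_sys_and_quiv_rep_G}) the two cocharacters compared are $\mu_{v_x}$ and $g_x\mu_{v_0}g_x^{-1}$, specific cocharacters whose \emph{literal} equality is the desired conclusion and is used again downstream (e.g.\ in Lemma \ref{lem_S_equiv_and_GIT_equiv}); replacing either one by a Weyl conjugate to sort its weights changes exactly the object the lemma is about. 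Moreover, no bookkeeping can close the gap, because the statement as written is false: for $n=2$ take $\mu_1(t)=\mathrm{diag}(t,1)$, $\mu_2(t)=\mathrm{diag}(1,t)$ and $g=\left(\begin{smallmatrix}0&1\\1&0\end{smallmatrix}\right)$; then $\mu_1(t)\,g\,\mu_2^{-1}(t)=g=\mu_2(t)\,g\,\mu_1^{-1}(t)$ for all $t$, so $g\in S_{12}\cap S_{21}\cap\mathrm{GL}_2(k)$ while $\mu_1\neq\mu_2$. Your multiset computation is sharp; the conclusion $\mu_1=\mu_2$ simply does not follow from the stated hypotheses, and the paper's citation does not close this either.

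What actually rescues the intended application is extra structure that the bare lemma (and hence your proof of it) never sees. In Lemma \ref{lem_cochar_equal} the invertible witnesses are not arbitrary: $g_x\phi_{c_{x,2}}\in P_x$ is block upper triangular and $\phi_{c_{x,1}}g_x^{-1}\in L_x$ is block diagonal, in a basis where the common maximal torus of $L_x$ containing the images of both cocharacters is diagonal. Since the determinant of an invertible block-triangular matrix is the product of the determinants of its diagonal blocks, each diagonal block is invertible, so your permutation argument can be run \emph{within each Levi block}, producing blockwise permutations $\sigma_B$, $\tau_B$ and hence equality of the weight multisets block by block. Thus $\mu_1$ and $\mu_2$ are conjugate by a Weyl element $w$ of $L_x$, and replacing $g_x$ by $wg_x$ (which preserves both membership conditions, as $w\in L_x\subseteq P_x$) yields $\mu_{v_x}=g_x\mu_{v_0}g_x^{-1}$ on the nose. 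So the honest repair is either to weaken the conclusion of Lemma \ref{lem_pre_mu} to Weyl-conjugacy or to add the parabolic/Levi constraints on the witnesses as hypotheses; your chamber normalization, by contrast, is a hypothesis that neither the lemma, nor its proof via \cite{MFK94}, nor its application supplies.
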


\begin{proof}
Applying the argument in \cite[Chapter 2, \S 2]{MFK94}, this lemma follows directly.
\end{proof}

\begin{lem}\label{lem_cochar_equal}
Let $\phi = (\phi_a)_{a \in \mathscr{Q}^{\boldsymbol{D}}_1} \in \mathscr{R}(\mathscr{Q}^{\boldsymbol{D}},\mathscr{I}^{\boldsymbol{D}}, [\boldsymbol{P}])$ be a point, and for each $x \in \boldsymbol{D}$, and let $\mu=(\mu_v)_{v \in \mathscr{Q}^{\boldsymbol{D}}_0}$ be a cocharacter of $G_{\boldsymbol{P}}$ such that the limit $\lim_{t \rightarrow 0} \mu(t) \cdot \phi$ exists. Then for each $x \in \boldsymbol{D}$, there exists $g_x \in {\rm GL}_n(k)$ such that
\begin{itemize}
    \item $g_x \phi_{c_{x,2}} \in P_{x}$ and $\phi_{c_{x,1}} g^{-1}_x \in L_x$,
    \item $\mu_{v_x}(t) = g_x \mu_{v_0}(t) g^{-1}_x$.
\end{itemize}
\end{lem}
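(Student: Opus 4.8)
The plan is to fix a single puncture $x \in \boldsymbol{D}$ and argue for each $x$ separately, which is legitimate since the conclusion only asserts the existence of some $g_x$ for each individual $x$, and the relevant data are the two cocharacters $\mu_{v_0},\mu_{v_x}$ together with the arrows $c_{x,1},c_{x,2}$ (the shared arrows $a_i,b_j$ at $v_0$ play no role in the $x$-component). Since $\phi \in \mathscr{R}(\mathscr{Q}^{\boldsymbol{D}},\mathscr{I}^{\boldsymbol{D}},[\boldsymbol{P}])$, Construction \ref{cons_new_quiv} furnishes an element $g_x \in {\rm GL}_n(k)$ with $h_x := \phi_{c_{x,1}}g_x^{-1} \in L_x$ and $p_x := g_x\phi_{c_{x,2}} \in P_x$; this is already the first bullet, so the entire content is to show the second bullet holds for a suitable such $g_x$. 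I would then set $\nu_x(t) := g_x\mu_{v_0}(t)g_x^{-1}$, the candidate for $\mu_{v_x}$.

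Next I would extract from the existence of $\lim_{t\to 0}\mu(t)\cdot\phi$ the existence of the two component limits $\lim_{t\to 0}\mu_{v_x}(t)\phi_{c_{x,1}}\mu_{v_0}(t)^{-1}$ and $\lim_{t\to 0}\mu_{v_0}(t)\phi_{c_{x,2}}\mu_{v_x}(t)^{-1}$. Substituting $\phi_{c_{x,1}} = h_xg_x$ and $g_x\mu_{v_0}(t)^{-1} = \nu_x(t)^{-1}g_x$ in the first, and $\phi_{c_{x,2}} = g_x^{-1}p_x$ and $\mu_{v_0}(t)g_x^{-1} = g_x^{-1}\nu_x(t)$ in the second, and using that left/right multiplication by the fixed invertible matrix $g_x$ preserves existence of a limit, the two conditions become
\[
\lim_{t\to 0}\mu_{v_x}(t)\,h_x\,\nu_x(t)^{-1}\ \text{exists}, \qquad \lim_{t\to 0}\nu_x(t)\,p_x\,\mu_{v_x}(t)^{-1}\ \text{exists}.
\]
As $h_x \in L_x$ and $p_x \in P_x$ are invertible, this says precisely that the sets $S_{12}$ and $S_{21}$ of Lemma \ref{lem_pre_mu}, formed from the pair $(\mu_{v_x},\nu_x)$, both meet ${\rm GL}_n(k)$. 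Applying Lemma \ref{lem_pre_mu} then gives $\mu_{v_x} = \nu_x = g_x\mu_{v_0}g_x^{-1}$, which is the second bullet.

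The hard part will be the invocation of Lemma \ref{lem_pre_mu} in the last step, for two reasons. First, that lemma is phrased for two cocharacters of a single maximal torus $T$, whereas here $\mu_{v_x}$ is a cocharacter of $L_x$ while $\nu_x$ lives a priori in the conjugate torus $g_xTg_x^{-1}$; one must check that the weight-filtration dimension count underlying the lemma (the argument of \cite[Chapter 2, \S 2]{MFK94}) applies to an arbitrary pair of cocharacters of ${\rm GL}_n(k)$, not merely to two sharing a torus. Second, that count by itself yields only that $\mu_{v_x}$ and $\nu_x$ have the same multiset of weights, i.e. are conjugate; promoting this to the genuine equality $\mu_{v_x}=\nu_x$ demanded by the statement is exactly where the Levi constraint $h_x \in L_x$ (and $p_x \in P_x$) must be exploited, together with the freedom to choose $g_x$ within the coset intersection $(L_x\phi_{c_{x,1}}) \cap (P_x\phi_{c_{x,2}}^{-1})$ of admissible witnesses. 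I would therefore expect the bulk of the work to lie in pinning down this choice of $g_x$ and in verifying that the one-sided limits, in the presence of the parabolic-Levi structure, force the two weight decompositions to coincide rather than merely to be conjugate.
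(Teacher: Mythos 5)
Your setup coincides with the paper's own proof up to the decisive step: the paper also works one puncture at a time, takes the witness $g'_x$ supplied by the definition of $\mathscr{R}(\mathscr{Q}^{\boldsymbol{D}},\mathscr{I}^{\boldsymbol{D}},[\boldsymbol{P}])$ in Construction \ref{cons_new_quiv} (so the first bullet holds by construction), and extracts from $\lim_{t\to 0}\mu(t)\cdot\phi$ exactly the two one-sided limits you display, with the same invertible witnesses $h_x=\phi_{c_{x,1}}g_x^{-1}\in L_x$ and $p_x=g_x\phi_{c_{x,2}}\in P_x$; your change of variables $\nu_x=g_x\mu_{v_0}g_x^{-1}$ reproduces the paper's final display. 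However, your proposal is not a complete proof: it terminates precisely where the content lies, by predicting rather than performing the choice of $g_x$. As you yourself note, Lemma \ref{lem_pre_mu} is stated for two cocharacters of a single maximal torus, and nothing you have established places $\nu_x$ and $\mu_{v_x}$ in a common torus; moreover your second worry is mathematically well-founded, since invertible witnesses alone only pin down the multiset of weights (for $\mu_1(t)=\mathrm{diag}(t,1)$, $\mu_2(t)=\mathrm{diag}(1,t)$ the permutation matrix is an invertible witness in both directions, yet $\mu_1\neq\mu_2$). So the conclusion genuinely cannot be reached from what you have written down.

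The paper closes this hole with one additional move before invoking Lemma \ref{lem_pre_mu}. Composing the two component limits gives the existence of $\lim_{t\to 0}\mu_{v_0}(t)\,\phi_{c_{x,2}}\phi_{c_{x,1}}\,\mu_{v_0}^{-1}(t)$; conjugating by the initial witness $g'_x$ and using $g'_x\phi_{c_{x,2}}\phi_{c_{x,1}}g'^{-1}_x\in P_x$, the paper deduces that $g'_x\mu_{v_0}g'^{-1}_x$ is a cocharacter of some maximal torus of $P_x$, hence of $L_x$ up to conjugacy. Since $\mu_{v_x}$ is itself a cocharacter of $L_x$ (the normalizer of $P_x$ being $P_x$), one may choose $l_x\in L_x$ so that, after replacing $g'_x$ by $g_x=l_xg'_x$ --- exactly the coset freedom $L_x g'_x$ you identified, which preserves both membership conditions --- the cocharacters $g_x\mu_{v_0}g_x^{-1}$ and $\mu_{v_x}$ lie in one and the same maximal torus of $L_x$. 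Only then does the paper apply Lemma \ref{lem_pre_mu}, citing alongside it the constraints $g_x\phi_{c_{x,2}}\in P_x$ and $\phi_{c_{x,1}}g_x^{-1}\in L_x$, to conclude $\mu_{v_x}=g_x\mu_{v_0}g_x^{-1}$ rather than mere conjugacy. In short: your plan is the paper's plan and your diagnosis of the two obstructions is accurate, but the normalization of $g_x$ into a common maximal torus of $L_x$ --- the step on which the whole lemma turns --- is asserted as future work in your proposal instead of being carried out, so as it stands the argument has a genuine gap.
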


\begin{proof}
The limit $\lim_{t \rightarrow 0} \mu(t) \cdot \phi$ exists if and only if the limit
\begin{align*}
    \lim_{t\rightarrow 0} \mu_{t(a)}(t) \phi_a \mu_{s(a)}^{-1}(t)
\end{align*}
exists in ${\rm GL}_n(k)$ for each arrow $a \in \mathscr{Q}^{\boldsymbol{D}}_1$. Then, for each $x \in \boldsymbol{D}$, the limits
\begin{align*}
\lim_{t\rightarrow 0} \mu_{v_x}(t) \phi_{c_{x,1}} \mu_{v_0}^{-1}(t), \ \ \lim_{t\rightarrow 0} \mu_{v_0}(t) \phi_{c_{x,2}} \mu_{v_x}^{-1}(t)
\end{align*}
exist, which implies the existence of the limit
\begin{align*}
    \lim_{t\rightarrow 0} \mu_{v_0}(t) (\phi_{c_{x,2}} \phi_{c_{x,1}}) \mu_{v_0}^{-1}(t).
\end{align*}
By definition, there exists $g'_x \in {\rm GL}_n(k)$ such that $g'_x \phi_{c_{x,2}} \in P_x$ and $\phi_{c_{x,1}} g'^{-1}_x \in L_x$. Moreover, $g'_x \phi_{c_{x,2}}\phi_{c_{x,1}} g'^{-1}_x \in P_x$. The existence of the limit
\begin{align*}
    \lim_{t\rightarrow 0} (g'_x \mu_{v_0}(t) g'^{-1}_x) (g'_x \phi_{c_{x,2}} \phi_{c_{x,1}} g'^{-1}_x) (g'_x \mu_{v_0}^{-1}(t) g'^{-1}_x)
\end{align*}
implies that $g'_x \mu_{v_0}(t) g'^{-1}_x$ is a cocharacter of some maximal torus in $P_x$ (and thus $L_x$). Note that $\mu_{v_x}$ is a cocharacter of $L_x$ because the normalizer $P_x$ is itself. Therefore, we can suppose that there exists $l_x \in L_x$ such that $(l_x g'_x) \mu_{v_0}(t) (l_x g'_x)^{-1}$ and $\mu_{v_x}(t)$ are cocharacters of the same maximal torus in $L_x$. Then we define $g_x: = l_x g'_x$, and clearly, $g_x \phi_{c_{x,2}} \in P_x$ and $\phi_{c_{x,1}} g^{-1}_x \in L_x$. Now we consider the limits
\begin{align*}
    & g_x (\lim_{t\rightarrow 0} \mu_{v_0}(t) \phi_{c_{x,2}} \mu_{v_x}^{-1}(t) )= \lim_{t\rightarrow 0} (g_x \mu_{v_0}(t) g^{-1}_x) (g_x \phi_{c_{x,2}}) \mu_{v_x}^{-1}(t)\\
    & (\lim_{t\rightarrow 0} \mu_{v_x}(t) \phi_{c_{x,1}} \mu_{v_0}^{-1}(t)) g^{-1}_x =  (\lim_{t\rightarrow 0} \mu_{v_x}(t) (\phi_{c_{x,1}}g_x^{-1})  (g_x  \mu_{v_0}^{-1}(t) g^{-1}_x) 
\end{align*}
Since $g_x \phi_{c_{x,2}} \in P_x$, $\phi_{c_{x,1}} g^{-1}_x \in L_x$ and both $g_x \mu_{v_0}(t) g^{-1}_x$ and $\mu_{v_x}(t)$ are cocharacters of the same maximal torus in $L_x$, the existence of the above limits gives $g_x \mu_{v_0} g^{-1}_x= \mu_{v_x}$ by Lemma \ref{lem_pre_mu} for each $x \in \boldsymbol{D}$.
\end{proof}

The above lemma actually implies that given a point $\phi$, if the limit $\lim_{t \rightarrow 0} \mu(t) \cdot \phi$ exists, then the cocharacter $\mu$ is determined by $\mu_{v_0}: \mathbb{G}_m \rightarrow {\rm GL}_n(k)$ and $\mu_{v_0}$ is conjugate to $\mu_{v_x}$ when regarded as cocharacters of ${\rm GL}_n(k)$. Abusing the notation, we define a parabolic subgroup 
\begin{align*}
    P_\mu := P_{\mu_{v_0}} = \{ g \in {\rm GL}_n(k) \text{ } | \text{ } \lim_{t \rightarrow 0} \mu_{v_0}(t)  g \mu^{-1}_{v_0}(t) \text{ exists}  \}.
\end{align*}

\begin{lem}\label{lem_p_mu}
Suppose that the limit $\lim_{t \rightarrow 0} \mu(t) \cdot \phi$ exists. Then the corresponding representation $\rho:\pi_1(X_{\boldsymbol{D}}) \rightarrow {\rm GL}_n(k)$ is well-defined when restricted to $P_\mu$.
\end{lem}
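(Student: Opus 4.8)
The plan is to show that $\rho$ sends every generator of $\pi_1(X_{\boldsymbol{D}})$ into the parabolic subgroup $P_\mu \subseteq {\rm GL}_n(k)$; since $P_\mu$ is a subgroup and $\pi_1(X_{\boldsymbol{D}})$ is generated by the $a_i$, $b_j$ and $c_x$, this forces the image of $\rho$ to lie in $P_\mu$, so that $\rho$ factors through the inclusion $P_\mu \hookrightarrow {\rm GL}_n(k)$ and is thus well-defined as a representation into $P_\mu$. The only input I need is the unwinding of what the existence of $\lim_{t \to 0} \mu(t) \cdot \phi$ says about individual arrows, exactly as at the beginning of the proof of Lemma \ref{lem_cochar_equal}.

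First I would recall that, by the formula for the $G_{\boldsymbol{P}}$-action on arrows, the hypothesis that $\lim_{t \to 0} \mu(t) \cdot \phi$ exists is equivalent to the existence in ${\rm GL}_n(k)$ of $\lim_{t \to 0} \mu_{t(a)}(t)\, \phi_a\, \mu_{s(a)}^{-1}(t)$ for every arrow $a \in \mathscr{Q}^{\boldsymbol{D}}_1$. For the loops $a_i$ and $b_j$ at $v_0$, both source and target are $v_0$, so this says precisely that $\lim_{t \to 0} \mu_{v_0}(t)\, \phi_{a_i}\, \mu_{v_0}^{-1}(t)$ and $\lim_{t \to 0} \mu_{v_0}(t)\, \phi_{b_j}\, \mu_{v_0}^{-1}(t)$ exist, which is exactly the condition $\rho(a_i) = \phi_{a_i} \in P_\mu$ and $\rho(b_j) = \phi_{b_j} \in P_\mu$ by the definition of $P_\mu$.

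The remaining generators are the $c_x$, and here I would use $\rho(c_x) = \phi_{c_{x,2}}\phi_{c_{x,1}}$ together with the existence of the two arrow-limits for $c_{x,1} : v_0 \to v_x$ and $c_{x,2} : v_x \to v_0$. Writing the product
\begin{align*}
\mu_{v_0}(t)\, \phi_{c_{x,2}}\, \mu_{v_x}^{-1}(t) \cdot \mu_{v_x}(t)\, \phi_{c_{x,1}}\, \mu_{v_0}^{-1}(t) = \mu_{v_0}(t)\, \phi_{c_x}\, \mu_{v_0}^{-1}(t),
\end{align*}
the intermediate cocharacter $\mu_{v_x}$ cancels, and since the limit of a product of matrices equals the product of the limits whenever both exist, the left-hand side converges; hence $\rho(c_x) = \phi_{c_x} \in P_\mu$. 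Combining the three cases, all generators land in $P_\mu$, and because $\phi$ satisfies the relation $\mathscr{I}^{\boldsymbol{D}}$ the defining relation of $\pi_1(X_{\boldsymbol{D}})$ holds already inside $P_\mu$, giving the desired factorization.

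I do not expect a serious obstacle here: the statement is essentially a continuity-of-multiplication argument. The one place that requires care is the composite $c_x = c_{x,2}c_{x,1}$, where the limit passes through the auxiliary vertex $v_x$ carrying its own cocharacter $\mu_{v_x}$; the cancellation $\mu_{v_x}^{-1}(t)\mu_{v_x}(t) = {\rm id}$ is what makes the argument go through cleanly, and if one prefers one may instead invoke Lemma \ref{lem_cochar_equal} to replace $\mu_{v_x}$ by the conjugate $g_x \mu_{v_0} g_x^{-1}$ before passing to the limit.
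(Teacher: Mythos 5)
Your proposal is correct and follows essentially the same route as the paper's proof: unwind the existence of $\lim_{t\to 0}\mu(t)\cdot\phi$ arrow by arrow, conclude $\rho(a_i)=\phi_{a_i}$ and $\rho(b_j)=\phi_{b_j}$ lie in $P_\mu$ directly, and obtain $\rho(c_x)=\phi_{c_{x,2}}\phi_{c_{x,1}}\in P_\mu$ by multiplying the two arrow limits through the cancelling cocharacter $\mu_{v_x}$. Your telescoping identity just makes explicit the multiplication-of-limits step the paper leaves implicit, and as you note, no appeal to Lemma \ref{lem_cochar_equal} is needed.
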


\begin{proof}
The limit $\lim_{t \rightarrow 0} \mu(t) \cdot \phi$ exists if and only if the following limits exist:
\begin{align*}
    & \lim_{t\rightarrow 0} \mu_{v_0}(t) \phi_{a_i}  \mu_{v_0}^{-1}(t), \ \ \ \lim_{t\rightarrow 0} \mu_{v_0}(t) \phi_{b_j} \mu_{v_0}^{-1}(t), \\
    & \lim_{t\rightarrow 0} \mu_{v_x}(t) \phi_{c_{x,1}} \mu_{v_0}^{-1}(t), \ \lim_{t\rightarrow 0} \mu_{v_0}(t)  \phi_{c_{x,2}} \mu_{v_x}^{-1}(t).
\end{align*}
Since
\begin{align*}
    \rho(a_i) = \phi_{a_i}, \ \rho(b_j) = \phi_{b_j}, \rho(c_x) = \phi_{c_{x,2}} \phi_{c_{x,1}},
\end{align*}
the limits
\begin{align*}
    \lim_{t\rightarrow 0} \mu_{v_0}(t) \rho(a_i)  \mu_{v_0}^{-1}(t), \ \ \lim_{t\rightarrow 0} \mu_{v_0}(t) \rho(b_j)  \mu_{v_0}^{-1}(t), \ \ \lim_{t\rightarrow 0} \mu_{v_0}(t) \rho(c_x)  \mu_{v_0}^{-1}(t)
\end{align*}
exist, which implies that $\rho(a_i)$, $\rho(b_j)$ and $\rho(c_x)$ are in $P_{\mu}$.
\end{proof}

Now we move to characters. A character of $L_{x}$ can be regarded as a character of $P_{x}$, i.e.
\begin{align*}
    {\rm Hom}(P_{x},\mathbb{G}_m) \cong {\rm Hom}(L_{x}, \mathbb{G}_m) \cong \mathbb{Z}^{n_x}.
\end{align*}
In the next construction, we define a character $\chi_{\boldsymbol\theta}$ of $G_{\boldsymbol{P}}$ based on the given data of weights $\boldsymbol\theta$.

\begin{con}\label{cons_char_theta}
A character $\chi: G_{\boldsymbol{P}} \rightarrow \mathbb{G}_m$ is a product of characters of groups associated to vertices. More precisely,
\begin{align*}
    \chi = \chi_{v_0} \cdot \prod_{x \in \boldsymbol{D}} \chi_{v_x}
\end{align*}
such that
\begin{align*}
    \chi( (g_v)_{v \in \mathscr{Q}^{\boldsymbol{D}}_0} ) = \chi_{v_0}(g_{v_0}) \cdot \prod_{x \in \boldsymbol{D}} \chi_{v_x}( g_{v_x}),
\end{align*}
where
\begin{align*}
    \chi_{v_0}: {\rm GL}_n(k) \rightarrow \mathbb{G}_m, \quad \chi_{v_x}: L_{x} \rightarrow \mathbb{G}_m.
\end{align*}
Now let $\boldsymbol\theta=\{\theta_{x,i}, x \in \boldsymbol{D}, 1 \leq i \leq n_x \}$ be a collection of rational numbers, which are regarded as weights. Since there are only finitely many rational numbers, we can write $\theta_{x,i} = \frac{d_{x,i}}{d}$ as fractions with a common denominator $d$. With respect to the above data, we define a character
\begin{align*}
    \chi_{\boldsymbol\theta}  : G_{\boldsymbol{P}} \rightarrow \mathbb{G}_m
\end{align*}
as follows. We write the character as $\chi_{\boldsymbol\theta}= \chi_{v_0} \cdot \prod_{x \in \boldsymbol{D}} \chi_{v_x}$. Since $\chi_{v_x}$ is a character of $L_{x}$, it is a product of characters of $L_{x,i}$. Therefore, we define
\begin{align*}
    \chi_{v_x}(l_x): =  \prod_{i=1}^{n_x} \det(l_{x,i})^{-d_{x,i}},
\end{align*}
where $l_x \in L_{x}$ and $l_{x,i}$ is the component of $l_x$ in $L_{x,i}$. For $\chi_0: {\rm GL}_n(k) \rightarrow \mathbb{G}_m$, we take the trivial character. This finishes the definition of the character $\chi_{\boldsymbol\theta}  : G_{\boldsymbol{P}} \rightarrow \mathbb{G}_m$.
\end{con}

Now we are ready to prove the equivalence of stability conditions.
\begin{prop}\label{prop_stab_local_sys_and_quiv_rep}
Let $\phi = (\phi_a)_{a \in \mathscr{Q}^{\boldsymbol{D}}_1} \in \mathscr{R}(\mathscr{Q}^{\boldsymbol{D}},\mathscr{I}^{\boldsymbol{D}}, [\boldsymbol{P}])$ be a point. Denote by $\mathscr{L}_\bullet$ the corresponding quasi-parabolic local system. Equipped with weights $\boldsymbol\theta$, we obtain a filtered local system $\mathscr{L}_\bullet$. Then, $\mathscr{L}_\bullet$ is semistable (resp. stable) of degree zero if and only if the point $\phi = (\phi_a)_{a \in \mathscr{Q}^{\boldsymbol{D}}_1} \in \mathscr{R}(\mathscr{Q}^{\boldsymbol{D}},\mathscr{I}^{\boldsymbol{D}},[\boldsymbol{P}])$ is $\chi_{\boldsymbol\theta}$-semistable (resp. $\chi_{\boldsymbol\theta}$-stable).
\end{prop}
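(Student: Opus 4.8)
The plan is to deduce the proposition from King's numerical criterion (Proposition \ref{prop_king2.5}), so the whole argument reduces to building a dictionary between cocharacters of $G_{\boldsymbol{P}}$ and filtered sub-local systems, under which the Hilbert--Mumford pairing $\langle\mu,\chi_{\boldsymbol\theta}\rangle$ becomes a positive multiple of the parabolic degree. Since $\deg^{\rm loc}(\mathscr{L}_\bullet)=0$, Lemma \ref{lem_par_deg_zero} lets me replace the slope inequality of Definition \ref{defn_fls_stab} by the bare inequality $\deg^{\rm loc}(\mathscr{L}'_\bullet)\le 0$ (resp. $<0$), which is exactly the shape produced by the pairing.

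First I would set up the dictionary. Given a cocharacter $\mu=(\mu_v)$ of $G_{\boldsymbol{P}}$ for which $\lim_{t\to 0}\mu(t)\cdot\phi$ exists, Lemma \ref{lem_cochar_equal} shows that $\mu$ is governed by $\mu_{v_0}$ (with $\mu_{v_x}=g_x\mu_{v_0}g_x^{-1}\in L_x$), and Lemma \ref{lem_p_mu} shows $\rho(a_i),\rho(b_j),\rho(c_x)\in P_\mu$. Hence the decreasing filtration by weight spaces of $\mu_{v_0}$ is preserved by $\rho$, so its steps $M^{(1)}\subseteq\cdots\subseteq M^{(r)}=\mathscr{L}$ are sub-local systems which, equipped with the induced parabolic structures, are filtered sub-local systems (as in Example \ref{exmp_sub_lc_qrep}). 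Conversely, each filtered sub-local system $\mathscr{L}'_\bullet$ yields a two-step cocharacter acting by a weight $a$ on $\mathscr{L}'$ and $b<a$ on a complement; here I must check that this cocharacter actually lies in $G_{\boldsymbol{P}}$, i.e. that $\mu_{v_x}\in L_x$, which is precisely guaranteed by the compatibility of the induced weights $\theta'_{x,j}=\theta_{x,i}$ in the definition of a filtered sub-local system.

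The core is the pairing computation. Because $\chi_{v_0}$ is trivial (Construction \ref{cons_char_theta}), only the factors $\chi_{v_x}(l_x)=\prod_i\det(l_{x,i})^{-d_{x,i}}$ contribute, so $\langle\mu,\chi_{\boldsymbol\theta}\rangle=-\sum_{x}\sum_i d_{x,i}\,\mathrm{tr}(\mu_{v_x}^{(i)})$, where $\mathrm{tr}(\mu_{v_x}^{(i)})$ is the total $\mu_{v_x}$-weight on the graded piece $\mathrm{gr}_i^x=\mathscr{L}_{x,i}/\mathscr{L}_{x,i+1}$. Writing $E^x_{i,j}$ for the dimension of the image of $M^{(j)}\cap\mathscr{L}_{x,i}$ in $\mathrm{gr}_i^x$ and summing by parts in the weights $w_1>\cdots>w_r$, the boundary term is $-w_r\sum_{x,i}d_{x,i}\lambda_{x,i}=-w_r\,d\deg^{\rm loc}(\mathscr{L}_\bullet)=0$ by hypothesis, and the remaining terms collapse, via the degree formula of Example \ref{exmp_sub_local_system}, to
\[
\langle\mu,\chi_{\boldsymbol\theta}\rangle=-d\sum_{j=1}^{r-1}(w_j-w_{j+1})\,\deg^{\rm loc}(M^{(j)}_\bullet),
\]
with $w_j-w_{j+1}>0$ and $d>0$ the common denominator. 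The same degree-zero hypothesis also yields the condition $\chi_{\boldsymbol\theta}(\Delta)=\{1\}$ of Proposition \ref{prop_king2.5}: on the diagonal scalar $\zeta$ one computes $\chi_{\boldsymbol\theta}(\zeta)=\zeta^{-d\deg^{\rm loc}(\mathscr{L}_\bullet)}=1$.

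Finally I would read off both implications. If $\mathscr{L}_\bullet$ is semistable then every $\deg^{\rm loc}(M^{(j)}_\bullet)\le 0$, so $\langle\mu,\chi_{\boldsymbol\theta}\rangle\ge 0$ for all admissible $\mu$, giving $\chi_{\boldsymbol\theta}$-semistability; conversely, testing the two-step cocharacter of a single $\mathscr{L}'_\bullet$ recovers $\deg^{\rm loc}(\mathscr{L}'_\bullet)\le 0$. For the stable case, $\langle\mu,\chi_{\boldsymbol\theta}\rangle=0$ forces every $\deg^{\rm loc}(M^{(j)}_\bullet)=0$, which under stability is possible only when the filtration is trivial, i.e. $\mu$ takes values in $\Delta$, while the reverse direction is again the two-step test. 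I expect the main obstacle to be the bookkeeping in this pairing computation, in particular keeping the conjugating elements $g_x$ consistent so that the standard flag defining $L_x$ and $\chi_{v_x}$ is correctly identified with the actual parabolic structure of the local system, and verifying that the cocharacters built from sub-local systems genuinely lie in $G_{\boldsymbol{P}}$.
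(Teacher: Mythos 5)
Your proposal is correct and follows essentially the same route as the paper's proof: reduction to King's criterion (Proposition \ref{prop_king2.5}) using Lemma \ref{lem_par_deg_zero}, the dictionary between cocharacters of $G_{\boldsymbol{P}}$ and filtrations by filtered sub-local systems via Lemmas \ref{lem_cochar_equal} and \ref{lem_p_mu} together with Examples \ref{exmp_sub_local_system} and \ref{exmp_sub_lc_qrep}, the pairing identity $\langle\mu,\chi_{\boldsymbol\theta}\rangle=-d\sum_j(\text{positive jump})\cdot\deg^{\rm loc}$, and the two-step cocharacter test for the converse. Your deviations are cosmetic and sound --- killing the boundary term with $\deg^{\rm loc}(\mathscr{L}_\bullet)=0$ where the paper instead shifts the weights by an integer $N$, and making explicit the check $\chi_{\boldsymbol\theta}(\Delta)=\{1\}$ that the paper leaves implicit --- with the one caveat that the containment $\mu_{v_x}\in L_x$ for your two-step cocharacter is supplied by the $L_x$-adjustment of $g_x$ in (the proof of) Lemma \ref{lem_cochar_equal}, not by the weight compatibility $\theta'_{x,j}=\theta_{x,i}$, which only enters the degree formula.
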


\begin{proof}
Let $\mu$ be a cocharacter of $G_{\boldsymbol{P}}$ such that the limit $\lim_{t \rightarrow 0} \mu(t) \cdot \phi$ exists. Then the representation $\rho|_{P_\mu}$ is well-defined by Lemma \ref{lem_p_mu}. We define $P:=P_\mu$ to simplify the notation, and denote by $\lambda=(\lambda_1,\dots,\lambda_{n'})$ the corresponding partition of $P$. Let $P=LU$ be the Levi decomposition of $P$ and let $L=\prod_{j=1}^{n'} L_j$ be a product of general linear groups. Note that a cocharacter of $L$ is equivalent to a cocharacter of a maximal torus $T_L$ of $L$. Then we assume that the restriction $\mu_{v_0}|_{L_j}: \mathbb{G}_m \rightarrow T_L \cap L_j$ is given by $t\mapsto t^{d'_j} \cdot {\rm id}$, where $d'_j$ is an integer. Without loss of generality, we suppose that the integers $d'_j$ for $1 \leq j \leq n'$ are in decreasing order. By Example \ref{exmp_sub_lc_qrep}, the parabolic subgroup $P$ determines a filtration of filtered local systems
\begin{align*}
\mathscr{L}_\bullet = \mathscr{L}_{1,\bullet} \supsetneq \dots \supsetneq \mathscr{L}_{n'+1,\bullet} = \{0\},
\end{align*}
and $(\mathscr{L}_j,\rho|_{\mathscr{L}_j})$ is the underlying local system of $\mathscr{L}_{j,\bullet}$. By Example \ref{exmp_sub_local_system}, the degree of $\mathscr{L}_{j,\bullet}$ is
\begin{align*}
\deg^{\rm loc}(\mathscr{L}_{j,\bullet}) = \sum_{x \in \boldsymbol{D}} \sum_{i=1}^{n_x} \theta_{x,i} \cdot \dim( \mathscr{L}_j \cap \mathscr{L}_{x,i} / \mathscr{L}_{x,i+1}  ).
\end{align*}

We calculate the pairing $\langle \mu, \chi_{\boldsymbol\theta} \rangle$ first. Since $\chi_{v_0}$ is trivial by construction, we have
\begin{align*}
    \langle \mu, \chi_{\boldsymbol\theta} \rangle = \sum_{x \in \boldsymbol{D}} \langle \mu_{v_x}, \chi_{v_x} \rangle.
\end{align*}
By Lemma \ref{lem_cochar_equal}, we know $g_x \mu_{v_0} g^{-1}_x = \mu_{v_x}$, and then
\begin{align*}
    \langle \mu_{v_x}, \chi_{v_x} \rangle = \langle \mu_{v_0}, g^{-1}_x \chi_{v_x} g_x \rangle.
\end{align*}
Note that the filtration determined by $\mu_{v_0}$ is 
\begin{align*}
    \mathscr{L} = \mathscr{L}_1 \supsetneq \cdots \supsetneq \mathscr{L}_{n'+1} = \{ 0 \},
\end{align*}
while the filtration given by $g^{-1}_x \chi_{v_x} g_x$ is
\begin{align*}
    \mathscr{L} = \mathscr{L}_{x,1} \supsetneq \cdots \mathscr{L}_{x,n_{x+1}} = \{ 0\}.
\end{align*}
Combining with weights, we have
\begin{align*}
     \langle \mu_{v_x}, \chi_{v_x} \rangle = \sum_{i=1}^{n_x} \sum_{j=1}^{n'}  (- d'_j \cdot d_{x,i}) \cdot \dim(\mathscr{L}_{j} \cap \mathscr{L}_{x,i} / (\mathscr{L}_{j+1} \cap \mathscr{L}_{x,i} + \mathscr{L}_{j} \cap \mathscr{L}_{x,i+1}) ).
\end{align*}
Therefore,
\begin{align*}
    \langle \mu, \chi_{\boldsymbol\theta} \rangle  & = \sum_{x \in \boldsymbol{D}} \langle \mu_{v_x}, \chi_{v_x} \rangle \\
    & = \sum_{x \in \boldsymbol{D}} \sum_{i=1}^{n_x} \sum_{j=1}^{n'}  (- d'_j \cdot d_{x,i}) \cdot \dim(\mathscr{L}_{j} \cap \mathscr{L}_{x,i} / (\mathscr{L}_{j+1} \cap \mathscr{L}_{x,i} + \mathscr{L}_{j} \cap \mathscr{L}_{x,i+1}) ) \\
    & = - \sum_{j=1}^{n'} d'_{j} \left( \sum_{x \in \boldsymbol{D}} \sum_{i=1}^{n_x} d_{x,i} \cdot \dim(\mathscr{L}_{j} \cap \mathscr{L}_{x,i} / (\mathscr{L}_{j+1} \cap \mathscr{L}_{x,i} + \mathscr{L}_{j} \cap \mathscr{L}_{x,i+1}) ) \right) \\
    & = - \sum_{j=1}^{n'} d'_{j} \left( \sum_{x \in \boldsymbol{D}} \sum_{i=1}^{n_x} d_{x,i} \cdot ( \dim(\mathscr{L}_{j} \cap \mathscr{L}_{x,i} / \mathscr{L}_{x,i+1}) - \dim(\mathscr{L}_{j+1} \cap \mathscr{L}_{x,i} / \mathscr{L}_{x,i+1}) )\right) \\
    & = - \sum_{j=1}^{n'} (d'_j - d'_{j-1}) \left( \sum_{x \in \boldsymbol{D}} \sum_{i=1}^{n_x} d_{x,i} \cdot \dim(\mathscr{L}_{j} \cap \mathscr{L}_{x,i} / \mathscr{L}_{x,i+1}) \right) \\
    & = - d \sum_{j=1}^{n'}  (d'_j - d'_{j-1}) \cdot \deg^{\rm loc}(\mathscr{L}_{j,\bullet}),
\end{align*}
where $d'_0=0$. In conclusion,
\begin{equation}\label{eq_mu_chi_sub}
    \langle \mu, \chi_{\boldsymbol\theta} \rangle =  - d \sum_{j=1}^{n'}  (d'_j - d'_{j-1}) \cdot \deg^{\rm loc}(\mathscr{L}_{j,\bullet}).
\end{equation}

%Since
%\begin{align*}
%\deg^{\rm loc}(\mathscr{L}_\bullet) = \sum_{x \in \boldsymbol{D}} \sum_{i=1}^{n_x} \theta_{x,i} \lambda_{x,i} = 0,
%\end{align*}
%we have
%\begin{align*}
%    \langle \mu, \chi_{\boldsymbol\theta} \rangle & = \langle \mu, \chi_{\boldsymbol\theta} \rangle + N d \sum_{x \in \boldsymbol{D}} \sum_{i=1}^{n_x} \theta_{x,i} \lambda_{x,i} \\
%    & = - d \sum_{j}\left( (d'_j - d'_{j-1}) \cdot \deg^{\rm loc}(\mathscr{L}_{j,\bullet}) \right) + N d \sum_{x \in \boldsymbol{D}} \sum_{i=1}^{n_x} \theta_{x,i} \lambda_{x,i} \\
%    & = - d \sum_{j} (d'_{j} - d'_{j-1} - N ) \cdot \deg^{\rm loc}(\mathscr{L}_{j,\bullet}) ,
%\end{align*}
%where $N$ is an arbitrary integer. Therefore, the integers $d'_j - d'_{j-1}$ can be supposed to be positive for all $j$ by choosing an appropriate integer $N$.

\hfill{\space}

We suppose that $\mathscr{L}_\bullet$ is semistable, which gives rise to $\deg^{\rm loc}(\mathscr{L}_{j,\bullet}) \leq 0$ by Lemma \ref{lem_par_deg_zero}. Then we have
\begin{align*}
    \langle \mu, \chi_{\boldsymbol\theta} \rangle = - d \sum_{j} (d'_{j} - d'_{j-1} ) \cdot \deg^{\rm loc}(\mathscr{L}_{j,\bullet}) \geq 0
\end{align*}
because the integers $d'_j$ are in decreasing order. By Proposition \ref{prop_king2.5}, we know that the corresponding point $\phi \in \mathscr{R}(\mathscr{Q}^{\boldsymbol{D}},\mathscr{I}^{\boldsymbol{D}},[\boldsymbol{P}])$ is $\chi_{\boldsymbol\theta}$-semistable.

On the other hand, we assume that $\phi$ is $\chi_{\boldsymbol\theta}$-semistable. By Proposition \ref{prop_king2.5}, the condition $\chi_{\boldsymbol\theta}(\Delta) =\{1\}$ implies that the degree of $\mathscr{L}_\bullet$ is zero. Let $\mathscr{L}'_\bullet \subseteq \mathscr{L}_\bullet$ be a filtered sub-local system. Denote by $P$ the parabolic subgroup determined by the filtration
\begin{align*}
    \mathscr{L} = \mathscr{L}_1  \supsetneq \mathscr{L}' = \mathscr{L}_2 \supsetneq \mathscr{L}_3 = \{0\}
\end{align*}
as in Example \ref{exmp_sub_local_system}. In this case, $n'=2$. We choose a cocharacter $\mu_{v_0}$ and denote by $\mu: \mathbb{G}_m \rightarrow G_{\boldsymbol{P}}$ the cocharacter determined by $\mu_{v_0}$ and $\phi$ as given in Lemma \ref{lem_cochar_equal} such that $P_\mu = P$ and $d'_2-d'_1$ is a positive integer. By Lemma \ref{lem_cochar_equal}, the limit $\lim_{t \rightarrow 0} \mu(t) \cdot \phi$ exists. Since $\phi$ is $\chi_{\boldsymbol\theta}$-semistable, we have
\begin{align*}
    \langle \mu , \chi_{\boldsymbol\theta} \rangle = -d (d'_2 - d'_1) \cdot \deg^{\rm loc}(\mathscr{L}'_{\bullet}) \geq 0
\end{align*}
by Proposition \ref{prop_king2.5}, and then,
\begin{align*}
    \deg^{\rm loc}(\mathscr{L}'_{\bullet}) \leq 0.
\end{align*}
Therefore, $\mathscr{L}_\bullet$ is semistable. The proof of the stable case is the same.
\end{proof}

\begin{rem}\label{rem_many_quiv}
We can construct a quiver $\mathscr{Q}''=(\mathscr{Q}''_0,\mathscr{Q}''_1)$ as we did in \S\ref{subsect_rep_fund_grp}, where $\mathscr{Q}''_0$ consists of a single point $v_0$ and the arrow set $\mathscr{Q}''_1$ is
\begin{align*}
    \mathscr{Q}''_1:=\{a_i, b_j, c_x : v_0 \rightarrow v_0, 1 \leq i, j \leq g, x \in \boldsymbol{D}\}.
\end{align*}
Define the relation
\begin{align*}
    \mathscr{I}''= (\prod_{i=1}^g [a_i,b_i] )( \prod_{x \in \boldsymbol{D}} c_x ) -v_0.
\end{align*}
It is easy to prove that
\begin{align*}
    \mathscr{R}(\mathscr{Q}'',\mathscr{I}'',n) \cong {\rm Hom}(\pi_1(X_{\boldsymbol{D}}), {\rm GL}_n(k)).
\end{align*}
Thus, the result of Proposition \ref{prop_para_and_orbit} also holds for special representations of the quiver $\mathscr{Q}''$. However, if we choose this quiver, it is impossible to prove the equivalence of stability conditions (Proposition \ref{prop_stab_local_sys_and_quiv_rep}). The reason is that there is only one vertex $v_0$ and the group $G$ acting on the quiver representation spaces $\mathscr{R}(\mathscr{Q}'',\mathscr{I}'',n)$ could only be a subgroup of ${\rm GL}_n(k)$, and a character $\chi: G \rightarrow \mathbb{G}_m$ cannot include all data (parabolic structures) on distinct punctures $x \in \boldsymbol{D}$.
\end{rem}

\begin{rem}\label{rem_real_weight}
In this paper, we only consider rational weights for filtered local systems for the convenience of applying King's quiver GIT techniques \cite[\S 2]{King94}. To deal with real weights, a method of approximation is necessary. Let $\boldsymbol{\theta}$ be a given collection of real weights, then one can approximate $\boldsymbol{\theta}$ by a collection of ``nearby'' rational weights $\boldsymbol{\theta}'$ such that $P_{\theta_x}=P_{\theta'_x}$ for each $x \in \boldsymbol{D}$ and the stability conditions for stable objects are equivalent. Such a method of approximation is common in dealing with parabolic/parahoric objects, see for example, \cite{BBP17} and \cite[\S8]{IS08}.

Now we briefly explain the word ``nearby''. First, 
we need the property that $P_{\theta_x}=P_{\theta_x'}$, which can be ensured if $\boldsymbol{\theta}'$ is chosen within the apartment in the Bruhat--Tits building of $\boldsymbol{\theta}$. Second, the weights $\boldsymbol{\theta}'$ are chosen to satisfy the condition that for any potential sub-local system  $\mathscr{L}'$, the sign of $\sum_{x \in \boldsymbol{D}} \sum_{i=1}^{n_x} \theta_{x,i} \cdot \dim( \mathscr{L}' \cap \mathscr{L}_{x,i} / \mathscr{L}_{x,i+1}  )$
is the same as that of $\sum_{x \in \boldsymbol{D}} \sum_{i=1}^{n_x} \theta'_{x,i} \cdot \dim( \mathscr{L}' \cap \mathscr{L}_{x,i} / \mathscr{L}_{x,i+1}  )$. Since the stability is defined by a finite set of inequalities for a given rank, such rational weights $\boldsymbol{\theta}'$  can always be found in a sufficiently small neighborhood of $\boldsymbol{\theta}$.
\end{rem}

\subsection{Moduli Space of Filtered Local Systems}

\begin{defn}
Let $\mathscr{L}_\bullet$ be a semistable filtered local system of degree zero. A \emph{Jordan--H\"older filtration} of $\mathscr{L}_\bullet$ is a filtration of filtered local systems
\begin{align*}
    \mathscr{L}_\bullet = \mathscr{L}_{1,\bullet} \supsetneq \dots \supsetneq \mathscr{L}_{n'+1,\bullet} = \{0\}
\end{align*}
such that the factors $\mathscr{L}_{j,\bullet}/\mathscr{L}_{j+1,\bullet}$ are stable filtered local systems of degree zero. We define ${\rm gr}(\mathscr{L}_\bullet):=\bigoplus_{j=1}^{n'} \mathscr{L}_{j,\bullet}/\mathscr{L}_{j+1,\bullet}$. Two filtered local systems $\mathscr{L}_\bullet$ and $\mathscr{L}'_\bullet$ are called \emph{$S$-equivalent} if ${\rm gr}(\mathscr{L}_\bullet) \cong {\rm gr}(\mathscr{L}'_\bullet)$. 
\end{defn}

Following the approach in \cite[\S 1.5]{HL97}, Jordan--H\"older filtrations for semistable filtered local systems always exist. Similar to \cite[Proposition 4.2]{King94}, we have the following result for $S$-equivalence classes

\begin{lem}\label{lem_S_equiv_and_GIT_equiv}
Let $\phi = (\phi_a)_{a \in \mathscr{Q}^{\boldsymbol{D}}_1}$ and $\phi' = (\phi'_a)_{a \in \mathscr{Q}^{\boldsymbol{D}}_1}$ be two $\chi_{\boldsymbol\theta}$-semistable points in $\mathscr{R}(\mathscr{Q}^{\boldsymbol{D}},\mathscr{I}^{\boldsymbol{D}},[\boldsymbol{P}])$. Let $\mathscr{L}_\bullet$ and $\mathscr{L}'_\bullet$ be the corresponding semistable filtered local systems of degree zero with weights $\boldsymbol\theta$ respectively. Then $\mathscr{L}_\bullet$ and $\mathscr{L}'_\bullet$ are $S$-equivalent if and only if $\phi \sim \phi'$, i.e. $\phi$ and $\phi'$ are GIT equivalent.
\end{lem}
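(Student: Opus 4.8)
The plan is to mirror King's argument for Proposition 4.2 in \cite{King94}, translating the GIT-theoretic description of orbit closures in Proposition \ref{prop_king2.6} into the language of Jordan--H\"older filtrations of filtered local systems. The crucial dictionary to establish is that a one-parameter degeneration $\lim_{t\to 0}\mu(t)\cdot\phi$ along a cocharacter $\mu$ of $G_{\boldsymbol{P}}$ with $\langle\mu,\chi_{\boldsymbol\theta}\rangle=0$ produces precisely the associated graded of a filtration of $\mathscr{L}_\bullet$ by \emph{degree-zero} filtered sub-local systems. Granting this, GIT equivalence and $S$-equivalence will both reduce to isomorphism of polystable objects, namely $\mathrm{gr}$ of a Jordan--H\"older filtration.

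First I would show that the cocharacters relevant to GIT equivalence correspond exactly to such filtrations. Given a $\chi_{\boldsymbol\theta}$-semistable point $\phi$ with associated degree-zero filtered local system $\mathscr{L}_\bullet$, and a cocharacter $\mu$ for which the limit exists, Example \ref{exmp_sub_lc_qrep} attaches to $P_\mu$ a filtration $\mathscr{L}_\bullet=\mathscr{L}_{1,\bullet}\supsetneq\cdots\supsetneq\mathscr{L}_{n'+1,\bullet}=\{0\}$ by filtered sub-local systems. The pairing formula \eqref{eq_mu_chi_sub} gives $\langle\mu,\chi_{\boldsymbol\theta}\rangle=-d\sum_j(d'_j-d'_{j-1})\deg^{\rm loc}(\mathscr{L}_{j,\bullet})$, and since the differences $d'_j-d'_{j-1}$ may be taken positive, the condition $\langle\mu,\chi_{\boldsymbol\theta}\rangle=0$ together with semistability (Lemma \ref{lem_par_deg_zero}) forces $\deg^{\rm loc}(\mathscr{L}_{j,\bullet})=0$ for every $j$. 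Hence every such $\mu$ yields a filtration whose successive quotients are degree-zero semistable filtered local systems.

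Next I would identify the limit point with the associated graded. Writing $\mu_{v_0}$ in block form with respect to the filtration and using Lemma \ref{lem_cochar_equal} to propagate the degeneration to the puncture vertices, where $\mu_{v_x}=g_x\mu_{v_0}g_x^{-1}$ keeps the weighted filtration at each $x\in\boldsymbol{D}$ compatible, the limit $\lim_{t\to 0}\mu(t)\cdot\phi$ retains only the block-diagonal part of each $\phi_a$ and discards the off-diagonal blocks. Through Proposition \ref{prop_para_and_orbit} this block-diagonal point corresponds to $\bigoplus_j\mathscr{L}_{j,\bullet}/\mathscr{L}_{j+1,\bullet}$, and refining $\mu$ along a Jordan--H\"older filtration yields the polystable object $\mathrm{gr}(\mathscr{L}_\bullet)$. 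By Proposition \ref{prop_king2.6}, the orbit $G_{\boldsymbol{P}}\cdot\phi$ is closed in the semistable locus of $\mathscr{R}(\mathscr{Q}^{\boldsymbol{D}},\mathscr{I}^{\boldsymbol{D}},[\boldsymbol{P}])$ if and only if this limit already lies in $G_{\boldsymbol{P}}\cdot\phi$, i.e. if and only if $\mathscr{L}_\bullet\cong\mathrm{gr}(\mathscr{L}_\bullet)$ is polystable; consequently the unique closed orbit in $\overline{G_{\boldsymbol{P}}\cdot\phi}$ meeting the semistable locus is the one attached to $\mathrm{gr}(\mathscr{L}_\bullet)$.

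The equivalence then follows formally. If $\mathscr{L}_\bullet$ and $\mathscr{L}'_\bullet$ are $S$-equivalent, then $\mathrm{gr}(\mathscr{L}_\bullet)\cong\mathrm{gr}(\mathscr{L}'_\bullet)$, so by Proposition \ref{prop_para_and_orbit} their associated closed orbits coincide and $\phi\sim\phi'$ by the second part of Proposition \ref{prop_king2.6}; conversely GIT equivalence forces these closed orbits to agree, hence $\mathrm{gr}(\mathscr{L}_\bullet)\cong\mathrm{gr}(\mathscr{L}'_\bullet)$ and thus $S$-equivalence. I expect the main obstacle to be the identification of the limit point with the associated graded in the third paragraph: one must verify that a single cocharacter degenerates the representation compatibly with the weighted filtration at \emph{every} puncture simultaneously, which is exactly where the conjugacy $\mu_{v_x}=g_x\mu_{v_0}g_x^{-1}$ supplied by Lemma \ref{lem_cochar_equal} is indispensable, and one must confirm that $\mathrm{gr}$ of a Jordan--H\"older filtration is genuinely polystable so that its orbit is closed in the semistable locus.
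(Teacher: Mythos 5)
Your proposal is correct and follows essentially the same route as the paper's own proof: both directions run through Proposition \ref{prop_king2.6}, use the pairing formulas \eqref{eq_mu_chi_sub} and \eqref{eq_mu_chi_quot} together with the positivity normalization of the $d'_j$ to force $\deg^{\rm loc}(\mathscr{L}_{j,\bullet}/\mathscr{L}_{j+1,\bullet})=0$, and identify $\lim_{t\to 0}\mu(t)\cdot\phi$ with the associated graded object by means of Lemma \ref{lem_cochar_equal}. The only place the paper works harder than your sketch is precisely the obstacle you flag at the end: rather than appealing abstractly to Proposition \ref{prop_para_and_orbit} plus a polystable/closed-orbit dictionary, it constructs the connecting element of $G_{\boldsymbol{P}}$ explicitly, taking $g_{v_0}=g$ and $g_{v_x}=(\phi'_{c_{x,1}}|_{L'})\,g\,(\phi_{c_{x,1}}|_{L})^{-1}$ and verifying by a normalizer argument that $g_{v_x}\in L_{x}$, which is what certifies that the two limit points lie in the same $G_{\boldsymbol{P}}$-orbit inside $\mathscr{R}(\mathscr{Q}^{\boldsymbol{D}},\mathscr{I}^{\boldsymbol{D}},[\boldsymbol{P}])$ so that the puncture data are degenerated compatibly at every $x\in\boldsymbol{D}$ simultaneously.
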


\begin{proof}
We first fix some notations. Let $P$ (resp. $P'$) be the automorphism group of the filtration
\begin{align*}
    & \mathscr{L}= \mathscr{L}_{1} \supsetneq \dots \supsetneq \mathscr{L}_{n'+1} = \{0\} \\
    (\text{resp. } & \mathscr{L}'= \mathscr{L}'_{1} \supsetneq \dots \supsetneq \mathscr{L}'_{n'+1} = \{0\})
\end{align*}
with Levi decomposition $P=LU$ (resp. $P'=L'U'$). We write $L$ (resp. $L'$) as a product of general linear groups $\prod_{j=1}^{n'} L_j$ (resp. $\prod_{j=1}^{n'} L'_j$). With a similar calculation as in Example \ref{exmp_sub_local_system}, we have
\begin{align*}
    \deg^{\rm loc} (\mathscr{L}_{j,\bullet}/\mathscr{L}_{j+1,\bullet}) = \sum_{x \in \boldsymbol{D}} \sum_{i=1}^{n_x}  d_{x,i} \cdot \dim(\mathscr{L}_{j} \cap \mathscr{L}_{x,i} / (\mathscr{L}_{j+1} \cap \mathscr{L}_{x,i} + \mathscr{L}_{j} \cap \mathscr{L}_{x,i+1}) ).
\end{align*}
We take a cocharacter $\mu: \mathbb{G}_m \rightarrow G_{\boldsymbol{P}}$ such that the limit $\lim_{t \rightarrow 0} \mu(t) \cdot \phi$ exists and $P_\mu =P$. We introduce the following notations:
\begin{align*}
    \phi_{a_i}|_{L}:=\lim_{t \rightarrow 0} \mu_{v_0}(t) \phi_{a_i} \mu^{-1}_{v_0}(t), \ \ & \phi_{b_j}|_{L}:=\lim_{t \rightarrow 0} \mu_{v_0}(t) \phi_{b_j} \mu^{-1}_{v_0}(t), \\ \phi_{c_{x,1}}|_{L}:=\lim_{t \rightarrow 0} \mu_{v_x}(t) \phi_{c_{x,1}} \mu^{-1}_{v_0}(t), \ \ & \phi_{c_{x,2}}|_{L}:=\lim_{t \rightarrow 0} \mu_{v_0}(t) \phi_{c_{x,2}} \mu^{-1}_{v_x}(t),
\end{align*}
and then
\begin{align*}
    \lim_{t \rightarrow 0} \mu(t) \cdot \phi = (\phi_a|_{L})_{a \in \mathscr{Q}^{\boldsymbol{D}}_1}.
\end{align*}
Clearly, we have $\phi_{a_i}|_L$, $\phi_{b_j}|_{L}$, $(\phi_{c_{x,2}}|_{L})(\phi_{c_{x,1}}|_{L}) \in L$. Moreover, for each $x \in \boldsymbol{D}$, we choose $g_x \in {\rm GL}_n(k)$ such that $g_x \phi_{c_{x,2}} \in P_x$, $\phi_{c_{x,1}} g^{-1}_x \in L_x$ and $\mu_{v_x}(t) = g_x \mu_{v_0}(t) g^{-1}_x$ by Lemma \ref{lem_cochar_equal}. The same notations will be applied for $\phi'$, and we will not repeat here.

\hfill{\space}

We first suppose that $\mathscr{L}_\bullet$ and $\mathscr{L}'_\bullet$ are $S$-equivalent. We take a cocharacter $\mu: \mathbb{G}_m \rightarrow G_{\boldsymbol{P}}$ (resp. $\mu': \mathbb{G}_m \rightarrow G_{\boldsymbol{P}}$) such that the limit $\lim_{t \rightarrow 0} \mu(t) \cdot \phi$ (resp. $\lim_{t \rightarrow 0} \mu'(t) \cdot \phi'$) exists and $P_\mu =P$ (resp. $P_{\mu'} = P'$). Since $\deg^{\rm loc} (\mathscr{L}_{j,\bullet}/\mathscr{L}_{j+1,\bullet}) = 0$ and $\deg^{\rm loc} (\mathscr{L}'_{j,\bullet}/\mathscr{L}'_{j+1,\bullet}) = 0$ by assumption, we have
\begin{align*}
    \deg^{\rm loc}(\mathscr{L}_{j, \bullet}) = \deg^{\rm loc}(\mathscr{L}'_{j, \bullet}) = 0.
\end{align*}
Therefore,
\begin{align*}
    \langle \mu, \chi_{\boldsymbol\theta} \rangle = \langle \mu', \chi_{\boldsymbol\theta} \rangle = 0
\end{align*}
by formula \ref{eq_mu_chi_sub}.

Denote by $\rho$ and $\rho'$ the corresponding representation of $\mathscr{L}_\bullet$ and $\mathscr{L}'_\bullet$ respectively. The isomorphism ${\rm gr}(\mathscr{L}_\bullet) \cong {\rm gr}(\mathscr{L}'_\bullet)$ implies that there exists $g \in {\rm GL}_n(k)$ such that $g P g^{-1} = P'$ and $g (\rho|_{L}) g^{-1}  = \rho'|_{L'}$, where $\rho|_{L}$ (resp. $\rho|_{L'}$) is the restriction of the representation $\rho$ (resp. $\rho'$) to $L$ (resp. $L'$). Thus, 
\begin{align*}
    g (\rho|_{L})(a_i) g^{-1} = (\rho'|_{L'})(a_i), \ \ g (\rho|_{L})(b_j) g^{-1} = (\rho'|_{L'})(b_j), \ \ g (\rho|_{L})(c_x) g^{-1} = (\rho'|_{L'})(c_x),
\end{align*}
and then
\begin{align*}
    g (\phi_{a_i}|_{L}) g^{-1} = \phi'_{a_i}|_{L'}, \ \ g (\phi_{b_j}|_{L}) g^{-1} = \phi'_{b_j}|_{L'}, \ \ g (\phi_{c_{x,2}}|_{L})(\phi_{c_{x,1}}|_{L}) g^{-1} = (\phi'_{c_{x,2}}|_{L'})( \phi'_{c_{x,1}}|_{L'}).
\end{align*}
Now we consider $g_x (\phi_{c_{x,2}}|_L)$. The reformulation
\begin{align*}
    g_x (\phi_{c_{x,2}}|_L) = \lim_{t \rightarrow 0} (g_x \mu_{v_0}(t) g^{-1}_x) (g_x \phi_{c_{x,2}}) \mu_{v_x}(t)^{-1}
\end{align*}
shows that $g_x (\phi_{c_{x,2}}|_L) \in L_x$ because $g_x \mu_{v_0}(t) g^{-1}_x = \mu_{v_x}(t)$ is a cocharacter of $L_x$ by Lemma \ref{lem_cochar_equal}. With a similar argument, we have
\begin{align*}
    g'_x (\phi'_{c_{x,2}}|_{L'}) \in L_x, \ \ (\phi_{c_{x,1}}|_L) g^{-1}_x \in L_x, \ \ (\phi'_{c_{x,1}}|_{L'}) (g'_x)^{-1} \in L_x.
\end{align*}
Therefore,
\begin{align*}
    & g (\phi_{c_{x,2}}|_{L})(\phi_{c_{x,1}}|_{L}) g^{-1} = gg^{-1}_x (g_x \phi_{c_{x,2}}|_{L}) (\phi_{c_{x,1}}|_{L} g^{-1}_x ) g_x g^{-1} \\
    = & (g'_x)^{-1} (g'_x \phi'_{c_{x,2}}|_{L'}) (\phi'_{c_{x,1}}|_{L'} (g'_x)^{-1} ) g'_x = (\phi'_{c_{x,2}}|_{L'}) (\phi'_{c_{x,1}}|_{L'}).
\end{align*}
Note that the four elements
\begin{align*}
    g_x \phi_{c_{x,2}}|_{L}, \ \ \phi_{c_{x,1}}|_{L} g^{-1}_x, \ \ g'_x \phi'_{c_{x,2}}|_{L'}, \ \
    \phi'_{c_{x,1}}|_{L'} (g'_x)^{-1}
\end{align*}
are in $L_x$, and then $g'_x g g^{-1}_x \in L_x$ because the normalizer of $L_x$ is itself. We define
\begin{align*}
    & g_{v_0}:=g, \\
    & g_{v_x} := (\phi'_{c_{x,1}}|_{L'}) g (\phi_{c_{x,1}}|_{L})^{-1}.
\end{align*}
Since 
\begin{align*}
    g_{v_x} = (\phi'_{c_{x,1}}|_{L'} (g'_x)^{-1}) (g'_x g g^{-1}_x) (g_x(\phi_{c_{x,1}}|_{L})^{-1}),
\end{align*}
we have $(g_v)_{v \in \mathscr{Q}^{\boldsymbol{D}}_0} \in G_{\boldsymbol{P}}$. It is easy to check that 
\begin{align*}
    (\phi'_a|_{L'})_{a \in \mathscr{Q}^{\boldsymbol{D}}_1} = (g_v)_{v \in \mathscr{Q}^{\boldsymbol{D}}_0} \cdot (\phi_a|_{L})_{a \in \mathscr{Q}^{\boldsymbol{D}}_1}.
\end{align*}
This gives rise to $\phi \sim \phi'$ from Proposition \ref{prop_king2.6}.

\hfill{\space}

For the other direction, we suppose that $\phi \sim \phi'$. By Proposition \ref{prop_king2.6}, there exist cocharacters $\mu$ and $\mu'$ such that $\langle \mu, \chi_{\boldsymbol\theta} \rangle = \langle \mu', \chi_{\boldsymbol\theta} \rangle = 0$ and the limits $\lim_{t \rightarrow 0} \mu(t) \cdot \phi$ and $\lim_{t \rightarrow 0} \mu'(t) \cdot \phi'$ are in the same closed $G_{\boldsymbol{P}}$-orbit. Following the notations
\begin{align*}
    \lim_{t \rightarrow 0} \mu(t) \cdot \phi = (\phi_a|_{L})_{a \in \mathscr{Q}^{\boldsymbol{D}}_1}, \ \ \lim_{t \rightarrow 0} \mu'(t) \cdot \phi' = (\phi'_a|_{L'})_{a \in \mathscr{Q}^{\boldsymbol{D}}_1} ,
\end{align*}
we suppose that 
\begin{align*}
    (g_v)_{v \in \mathscr{Q}^{\boldsymbol{D}}_0} \cdot (\phi_a|_{L})_{a \in \mathscr{Q}^{\boldsymbol{D}}_1} = (\phi'_a|_{L'})_{a \in \mathscr{Q}^{\boldsymbol{D}}_1},
\end{align*}
where $(g_v)_{v \in \mathscr{Q}^{\boldsymbol{D}}_0} \in G_{\boldsymbol{P}}$. By Proposition \ref{prop_para_and_orbit}, we have $g_{v_0} (\rho|_{L}) g_{v_0}^{-1} = \rho'|_{L'}$. Therefore, ${\rm gr}(\mathscr{L}_\bullet) \cong {\rm gr}(\mathscr{L}'_\bullet)$. 

By formula \ref{eq_mu_chi_sub}, we have
\begin{align*}
    \langle \mu, \chi_{\boldsymbol\theta} \rangle =  - d \sum_{j=1}^{n'}  (d'_j - d'_{j-1}) \cdot \deg^{\rm loc}(\mathscr{L}_{j,\bullet}).
\end{align*}
Since $\langle \mu, \chi_{\boldsymbol\theta} \rangle = 0$, we get $\deg^{\rm loc}(\mathscr{L}_{j,\bullet}) = 0$. Therefore, $\deg^{\rm loc} (\mathscr{L}_{j,\bullet}/\mathscr{L}_{j+1,\bullet}) = 0$. The same argument holds for $\deg^{\rm loc} (\mathscr{L}'_{j,\bullet}/\mathscr{L}'_{j+1,\bullet}) = 0$. 

At the end, we can take maximal filtrations $\mathscr{L}_{j,\bullet}$ 
 and $\mathscr{L}'_{j,\bullet}$ of $\mathscr{L}_{\bullet}$ and $\mathscr{L}'_{\bullet}$ respectively and then the quotients $\mathscr{L}_{j,\bullet}/\mathscr{L}_{j+1 , \bullet}$ and $\mathscr{L}'_{j,\bullet}/\mathscr{L}'_{j+1,\bullet}$ are stable with degree zero. Therefore, arguments above show that $\mathscr{L}_\bullet$ and $\mathscr{L}'_\bullet$ are $S$-equivalent.

\end{proof}

We consider the moduli functor
\begin{align*}
    \widetilde{\mathcal{M}}_{\rm B}(X_{\boldsymbol{D}}, [\boldsymbol{P}],\boldsymbol\theta) : ({\rm Sch}/k)^{\rm op} \rightarrow {\rm Set},
\end{align*}
where ${\rm Sch}/k$ is the category of $k$-schemes, defined by sending each $k$-scheme $S$ to the set of isomorphism classes of $S$-flat families of semistable filtered local systems of type $[\boldsymbol{P}]$ with weights $\boldsymbol\theta$ on $X_{\boldsymbol{D}}$. 

\begin{thm}\label{thm_tame_Betti_lc}
Define
\begin{align*}
    \mathcal{M}_{\rm B}(X_{\boldsymbol{D}},[\boldsymbol{P}],\boldsymbol\theta) := \mathscr{R}(\mathscr{Q}^{\boldsymbol{D}},\mathscr{I}^{\boldsymbol{D}},[\boldsymbol{P}]) /\!\!/ (G_{\boldsymbol{P}} , \chi_{\boldsymbol\theta} )
\end{align*}
to be the GIT quotient with respect to $\chi_{\boldsymbol\theta}$. Then,
\begin{enumerate}
    \item $\mathcal{M}_{\rm B}(X_{\boldsymbol{D}},[\boldsymbol{P}],\boldsymbol\theta)$ is a quasi-projective variety;
    \item $\mathcal{M}_{\rm B}(X_{\boldsymbol{D}},[\boldsymbol{P}],\boldsymbol\theta)$ universally co-represents the moduli functor $\widetilde{\mathcal{M}}_{\rm B}(X_{\boldsymbol{D}},[\boldsymbol{P}],\boldsymbol\theta)$;
    \item the $k$-points are in one-to-one correspondence with $S$-equivalence classes of degree zero semistable filtered local systems of type $[\boldsymbol{P}]$ with weights $\boldsymbol\theta$;
    \item there exists an open subset $\mathcal{M}^s_{\rm B}(X_{\boldsymbol{D}},[\boldsymbol{P}],\boldsymbol\theta)$, whose $k$-points correspond to isomorphism classes of degree zero stable filtered local systems of type $[\boldsymbol{P}]$ with weights $\boldsymbol\theta$.
\end{enumerate}
\end{thm}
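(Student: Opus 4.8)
The plan is to realize $\mathcal{M}_{\rm B}(X_{\boldsymbol{D}},[\boldsymbol{P}],\boldsymbol\theta)$ as the GIT quotient of the $G_{\boldsymbol{P}}$-variety $\mathscr{R}:=\mathscr{R}(\mathscr{Q}^{\boldsymbol{D}},\mathscr{I}^{\boldsymbol{D}},[\boldsymbol{P}])$ linearized by $\chi_{\boldsymbol\theta}$, and then to deduce the four assertions by feeding every GIT statement through the dictionary of Proposition \ref{prop_para_and_orbit}, Proposition \ref{prop_stab_local_sys_and_quiv_rep} and Lemma \ref{lem_S_equiv_and_GIT_equiv}. First I would record two structural facts. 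On the one hand, $\mathscr{R}$ is affine: it is cut out inside the principal open locus $\{\prod_{a}\det\phi_a\neq 0\}$ of the affine space $\mathscr{R}(\mathscr{Q}^{\boldsymbol{D}},n)$ by the closed conditions $\phi_{c_{x,1}}\phi_{c_{x,2}}\in P_x$ for $x\in\boldsymbol{D}$ together with the relation $\phi_{\mathscr{I}^{\boldsymbol{D}}}=0$, and $G_{\boldsymbol{P}}$ is reductive. On the other hand, since $\deg^{\rm loc}$ depends only on $\boldsymbol\theta$ and on the partition $\boldsymbol\lambda$ attached to $[\boldsymbol{P}]$, it is constant on $\mathscr{R}$; the standing hypothesis $\deg^{\rm loc}=0$ is precisely the statement that $\chi_{\boldsymbol\theta}$ is trivial on the kernel $\Delta$ of the action, so that Proposition \ref{prop_king2.5} and Proposition \ref{prop_king2.6} apply. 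Assertion (1) is then immediate: reductivity of $G_{\boldsymbol{P}}$ and affineness of $\mathscr{R}$ make $k[\mathscr{R}]^{G_{\boldsymbol{P}}}$ a finitely generated $k$-algebra, so that, as reviewed in \S\ref{subsect_num_crit}, the quotient $\mathscr{R}/\!\!/(G_{\boldsymbol{P}},\chi_{\boldsymbol\theta})={\rm Proj}\big(\bigoplus_{i\ge 0}k[\mathscr{R}]^{G_{\boldsymbol{P}},\chi_{\boldsymbol\theta}^i}\big)$ is projective over the finite-type affine scheme ${\rm Spec}\,k[\mathscr{R}]^{G_{\boldsymbol{P}}}$, hence quasi-projective.

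Assertions (3) and (4) are then formal. By the general formalism the closed points of the quotient are the GIT-equivalence classes of $\chi_{\boldsymbol\theta}$-semistable points of $\mathscr{R}$; Proposition \ref{prop_para_and_orbit} turns $G_{\boldsymbol{P}}$-orbits into isomorphism classes of quasi-parabolic local systems of type $[\boldsymbol{P}]$, Proposition \ref{prop_stab_local_sys_and_quiv_rep} matches $\chi_{\boldsymbol\theta}$-semistability with semistability of the associated degree-zero filtered local system with weights $\boldsymbol\theta$, and Lemma \ref{lem_S_equiv_and_GIT_equiv} identifies GIT-equivalence with $S$-equivalence; combining these yields (3). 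For (4) I would use that the $\chi_{\boldsymbol\theta}$-stable locus $\mathscr{R}^s_{\chi_{\boldsymbol\theta}}$ is open and saturated, so that its image $\mathcal{M}^s_{\rm B}$ is open, and that on $\mathscr{R}^s_{\chi_{\boldsymbol\theta}}$ each GIT-equivalence class is a single orbit by Proposition \ref{prop_king2.6}; the same dictionary then matches these orbits with isomorphism classes of degree-zero stable filtered local systems of type $[\boldsymbol{P}]$ with weights $\boldsymbol\theta$.

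Assertion (2), the universal co-representability, is where the substantive work lies. The plan is to upgrade the orbit and stability correspondences to arbitrary base schemes. Given an $S$-flat family of semistable filtered local systems of type $[\boldsymbol{P}]$ with weights $\boldsymbol\theta$, after trivializing the underlying rank-$n$ local system and the flags defining the quasi-parabolic structure (locally on $S$) one obtains a morphism $S\to\mathscr{R}^{ss}_{\chi_{\boldsymbol\theta}}$, well-defined up to the $G_{\boldsymbol{P}}(S)$-action, and conversely; two such morphisms yield isomorphic families exactly when they differ by an element of $G_{\boldsymbol{P}}(S)$. This identifies the functor $\widetilde{\mathcal{M}}_{\rm B}(X_{\boldsymbol{D}},[\boldsymbol{P}],\boldsymbol\theta)$ with the quotient functor $S\mapsto\mathscr{R}^{ss}_{\chi_{\boldsymbol\theta}}(S)/G_{\boldsymbol{P}}(S)$. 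Since the GIT quotient of an affine variety by a reductive group is a universal good quotient in Mumford's sense, it universally co-represents this quotient functor, which gives (2).

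The main obstacle is exactly this family-level step. One must verify that an $S$-flat family can indeed be rigidified to a morphism into $\mathscr{R}^{ss}_{\chi_{\boldsymbol\theta}}$ -- which requires local triviality in $S$ of both the ambient local system and of the quasi-parabolic flags at each puncture -- that the residual ambiguity is precisely $G_{\boldsymbol{P}}(S)$ and nothing more, and that fibrewise semistability is an open condition coinciding with $\chi_{\boldsymbol\theta}$-semistability of the corresponding $S$-points. These are the genuinely relative refinements of Propositions \ref{prop_para_and_orbit} and \ref{prop_stab_local_sys_and_quiv_rep}, and the openness of (semi)stability is the point most in need of care. Once this relative picture is in place, parts (1), (3) and (4) follow formally from King's numerical criterion and the reductivity of $G_{\boldsymbol{P}}$.
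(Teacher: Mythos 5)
Your proposal is correct, and its skeleton coincides with the paper's own proof: the paper likewise obtains (3) and (4) by combining Proposition \ref{prop_para_and_orbit}, Proposition \ref{prop_stab_local_sys_and_quiv_rep} and Lemma \ref{lem_S_equiv_and_GIT_equiv} with the general GIT facts recalled in \S\ref{subsect_num_crit}, and for (2) it simply invokes ``the same argument as in \cite[\S 6]{Sim94b}'' --- that is, exactly the family-level rigidification, the identification of the moduli functor with the (locally defined) quotient functor, and the openness of fibrewise semistability that you flag as the substantive remaining work, so on (2) your sketch sits at the same level of detail as the paper itself. Where you diverge, you actually sharpen the paper's one-line treatment of (1): the paper only records that $\mathscr{R}:=\mathscr{R}(\mathscr{Q}^{\boldsymbol{D}},\mathscr{I}^{\boldsymbol{D}},[\boldsymbol{P}])$ is quasi-projective, whereas the ${\rm Proj}\bigl(\bigoplus_{i\geq 0}k[\mathscr{R}]^{G,\chi^i}\bigr)$ formalism of \S\ref{subsect_num_crit} is set up for affine $\mathscr{R}$; your claim that $\mathscr{R}$ is in fact affine is true and closes this gap, but the step you assert without proof --- that the existential condition of Construction \ref{cons_new_quiv} (there exists $g_x$ with $\phi_{c_{x,1}}g_x^{-1}\in L_x$ and $g_x\phi_{c_{x,2}}\in P_x$) agrees with the closed condition $\phi_{c_{x,1}}\phi_{c_{x,2}}\in P_x$ on the locus where all $\phi_a$ are invertible --- deserves a line: necessity follows from $\phi_{c_{x,1}}\phi_{c_{x,2}}=(\phi_{c_{x,1}}g_x^{-1})(g_x\phi_{c_{x,2}})\in L_xP_x=P_x$, and sufficiency by taking $g_x=\phi_{c_{x,1}}$. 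Likewise, your identification of the kernel $\Delta$ with the diagonal scalars $\mathbb{G}_m$ and the computation that $\chi_{\boldsymbol\theta}$ sends the scalar $t$ to $t^{-d\cdot\deg^{\rm loc}(\mathscr{L}_\bullet)}$, so that $\chi_{\boldsymbol\theta}(\Delta)=\{1\}$ holds precisely when the degree vanishes, verifies a hypothesis of Proposition \ref{prop_king2.5} that the paper uses but never checks explicitly. In short: same route as the paper, with two worthwhile refinements on part (1) and one easy verification left implicit; part (2) remains, in both your write-up and the paper's, a reduction to Simpson's family-level argument rather than a self-contained proof.
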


\begin{proof}
Since $\mathscr{R}(\mathscr{Q}^{\boldsymbol{D}},\mathscr{I}^{\boldsymbol{D}}, [\boldsymbol{P}])$ is quasi-projective, the first statement follows directly. By Proposition \ref{prop_stab_local_sys_and_quiv_rep}, $\chi_{\boldsymbol\theta}$-semistable points are in one-to-one correspondence with semistable local systems of type $[\boldsymbol{P}]$ with weights $\boldsymbol\theta$. Moreover, two semistable points are GIT equivalent if and only if the corresponding filtered local systems are $S$-equivalent (see Lemma \ref{lem_S_equiv_and_GIT_equiv}). Therefore, with the same argument as in \cite[\S 6]{Sim94b}, the theorem follows directly from the geometric invariant theory (see \cite{MFK94,HL97,Sim94a} for instance). 
\end{proof}

\begin{exmp}\label{exmp_trivial_local}
In this example, we discuss the case of trivial weights. More precisely, $\theta_x=0$ for $x \in \boldsymbol{D}$ and $P_x={\rm GL}_n(k)$. The triviality of $P_x$ gives $L_{x} = {\rm GL}_n(k)$. Thus,  
\begin{align*}
G_{\boldsymbol{P}}=\mathrm{GL}_n(k)\times\prod_{x\in\boldsymbol{D}}\mathrm{GL}_n(k)
\end{align*}
and
\begin{align*}
    \mathscr{R}(\mathscr{Q}^{\boldsymbol{D}},\mathscr{I}^{\boldsymbol{D}},[\boldsymbol{P}])= \mathscr{R}(\mathscr{Q}^{\boldsymbol{D}},\mathscr{I}^{\boldsymbol{D}},n).
\end{align*}
The triviality of each $\theta_x$ shows that the character 
\begin{align*}
    \chi_{\boldsymbol{\theta}}: G_{\boldsymbol{P}}\to\mathbb{G}_m
\end{align*}
is also trivial in this case. Thus, any point $(\phi_a)_{a \in \mathscr{Q}^{\boldsymbol{D}}_1} \in \mathscr{R}(\mathscr{Q}^{\boldsymbol{D}},\mathscr{I}^{\boldsymbol{D}},n)$ is $\chi_{\boldsymbol{\theta}}$-semistable automatically, and the moduli space 
\begin{align*}
    \mathcal{M}_{\rm B}(X_{\boldsymbol{D}},[\boldsymbol{P}],\boldsymbol\theta)=\mathscr{R}(\mathscr{Q}^{\boldsymbol{D}},\mathscr{I}^{\boldsymbol{D}},n)/\!\!/G_{\boldsymbol{P}}\cong\mathrm{Hom}(\pi_1(X_{\boldsymbol{D}}),\mathrm{GL}_n(k))/\!\!/\mathrm{GL}_n(k)
\end{align*}
is affine, whose points correspond to semisimple fundamental group representations. The latter affine GIT quotient is usually called the \emph{character variety}.
\end{exmp}

\begin{exmp}
In this example, we construct a stable filtered $G$-local system, which does not correspond to an irreducible or semisimple representation. Therefore, the Betti moduli spaces in the tame NAHC are not isomorphic to the character varieties in general. 

Let $X= \mathbb{P}^1$, and $\boldsymbol{D}$ is a collection of three distinct points $x_1,x_2,x_3$. Let $\mathscr{L}$ be a rank two vector space. Without loss of generality, we set $\mathscr{L}=\mathbb{C}^2$ and fix the standard basis $\{e_1,e_2\}$ of $\mathbb{C}^2$. Now we define a filtered local system $\mathscr{L}_\bullet$ as follows. For $x_1$ and $x_2$, the filtration $\mathscr{L}_{x_i,\bullet}$ is given as follows
\begin{align*}
    \mathscr{L} = \mathscr{L}_{x_i,1} \supsetneq \mathscr{L}_{x_i,2} = \langle e_1 \rangle \supsetneq \{0\}, \quad i=1,2,
\end{align*}
with weights
\begin{align*}
    \theta_{x_i,1} = \frac{1}{3}, \quad \theta_{x_i,2} = -\frac{1}{3}, \quad i=1,2,
\end{align*}
and the representation $\rho \in {\rm Hom}(\pi_1(X_{\boldsymbol{D}}), {\rm GL}_2(\mathbb{C}))$ is defined as
\begin{align*}
    \rho(c_{x_1}) = \begin{pmatrix}
        1 & 1 \\ 0 & 1
    \end{pmatrix}, \quad \rho(c_{x_2}) = \begin{pmatrix}
        1 & -1 \\ 0 & 1
    \end{pmatrix}.
\end{align*}
For $x_3$, we equip it with a trivial parabolic structure. In other words, the filtration $\mathscr{L}_{x_3, \bullet}$ is trivial with trivial weights. Moreover, $\rho(c_{x_3})$ is the identity matrix. Clearly, the filtered local system $\mathscr{L}_\bullet$ is of degree zero, i.e.
\begin{align*}
    \deg^{\rm loc}(\mathscr{L}_\bullet) = \sum_{j=1}^3 \sum_{i=1}^2 \theta_{x_j,i} = 0.
\end{align*}

From the construction of $\mathscr{L}_\bullet$, it is easy to find that it has a unique rank one sub-filtered local system $\mathscr{L}'_\bullet \subseteq \mathscr{L}_\bullet$. This sub-filtered local system $\mathscr{L}'_\bullet$ is determined by the subspace $\mathscr{L}' = \langle e_1 \rangle \subseteq \mathscr{L}$ because $\mathscr{L}'$ is the unique subspace of dimension one preserved by the representation $\rho$. Furthermore, the parabolic structure of $\mathscr{L}'_\bullet$ at each puncture $x_i$ is given as follows
\begin{align*}
    \mathscr{L}' = \mathscr{L}'_{x_i,1} \supsetneq 0
\end{align*}
with $\theta'_{x_i,1} = -\frac{1}{3}$ for $i=1,2$ and $\theta'_{x_3,1} = 0$. Clearly, 
\begin{align*}
    \deg^{\rm loc} (\mathscr{L}'_\bullet) = - \frac{2}{3} < 0 .
\end{align*}
Therefore, $\mathscr{L}_\bullet$ is stable. However, the corresponding representation $\rho$ is not irreducible because $\rho(c_{x_1})$ and $\rho(c_{x_2})$ are upper triangular matrices and it is only an indecomposable representation. Therefore, this stable $\boldsymbol\theta$-filtered local system $\mathscr{L}_\bullet$ does not correspond to a point in the character variety.
\end{exmp}

\section{Moduli Space of Filtered G-local Systems}\label{sect_fgls}

In this section, we generalize the result for filtered local systems in \S\ref{sect_fls} to filtered $G$-local systems. The stability condition for filtered $G$-local systems is given in Definition \ref{defn_stab_cond_G}, which is an analogue of Ramanathan's stability condition on principal bundles and is called the \emph{$R$-stability condition} in this paper. Following Construction \ref{cons_new_quiv} we construct a new space of quiver representations $\mathscr{R}(\mathscr{Q}^{\boldsymbol{D}},\mathscr{I}^{\boldsymbol{D}},\boldsymbol\theta)$, and show that the isomorphism classes of filtered $G$-local systems on $X_{\boldsymbol{D}}$ are in one-to-one correspondence with orbits in the quiver representation space (Proposition \ref{prop_para_and_orbit_G}). Following the equivalence of stability conditions (Proposition \ref{prop_stab_local_sys_and_quiv_rep_G}), we construct the moduli space of filtered $G$-local systems (Theorem \ref{thm_tame_Betti_lc_G}).

\subsection{Filtered G-local Systems}
Let $G$ be a connected reductive group over $k$ with a fixed maximal torus $T$. Denote by $\mathcal{R}$ the set of roots. There is a natural pairing of cocharacters and characters
\begin{align*}
    \langle \cdot , \cdot \rangle : {\rm Hom}(\mathbb{G}_m ,T) \times {\rm Hom}(T, \mathbb{G}_m) \rightarrow \mathbb{Z}.
\end{align*}
This pairing can be easily extended to be with coefficients in $\mathbb{Q}$ and $\mathbb{R}$. Before we move on, we consider the following case about the natural pairing. Let $P$ be a parabolic subgroup of $G$ with a given maximal torus $T_P$. Note that $T_P$ may not be the same as $T$. Let $\chi$ be a character of $P$ and let $\mu$ be a cocharacter of $T$. Given $g \in G$ such that $T = g T_P g^{-1}$, we define
\begin{align*}
    (  \mu , \chi ) := \langle g^{-1} \mu g , \chi \rangle = \langle \mu , g \chi g^{-1} \rangle,  
\end{align*}
where $(g^{-1} \mu g)(t):= g^{-1} \mu(t) g$ is a cocharacter of $g^{-1} T g$ and $(g \chi g^{-1})(p):=\chi(g^{-1} p g)$ is a character of $g P g^{-1}$.

A \emph{weight $\theta$} is a cocharacter of $T$ with rational number coefficients, i.e. $\theta \in {\rm Hom}(\mathbb{G}_m ,T) \otimes_{\mathbb{Z}} \mathbb{Q}$. A weight $\theta$ determines a parabolic subgroup
\begin{align*}
    P_\theta:=\{ g \in G \, | \, \lim_{t \rightarrow 0} \theta(t) g \theta(t)^{-1} \text{ exists }  \}.
\end{align*}
Denote by $L_\theta$ the Levi subgroup of $P_\theta$. Note that $P_{-\theta}$ is the opposite parabolic subgroup with the same Levi subgroup $L_\theta$.

Consider the space of fundamental group representations ${\rm Hom}(\pi_1(X_{\boldsymbol{D}}),G)$, on which there is a natural $G$-action via conjugation. Let $\rho: \pi_1(X_{\boldsymbol{D}}) \rightarrow G$ be a representation (a homomorphism of groups), and denote by $[\rho]$ the corresponding points in ${\rm Hom}(\pi_1(X_{\boldsymbol{D}}),G)$. A \emph{$G$-local system} on $X_{\boldsymbol{D}}$ is regarded as a representation $\rho: \pi_1(X_{\boldsymbol{D}}) \rightarrow G$. Two $G$-local systems are isomorphic if the representations are in the same $G$-orbit in ${\rm Hom}(\pi_1(X_{\boldsymbol{D}}),G)$.

\begin{defn}
Let $\boldsymbol\theta = \{\theta_x, x \in \boldsymbol{D}\}$ be a collection of weights. A \emph{$\boldsymbol\theta$-filtered $G$-local system} is a representation $\rho: \pi_1(X_{\boldsymbol{D}}) \rightarrow G$ such that $\rho(c_x)$ is conjugate to an element in $P_{\theta_x}$ for every $x \in \boldsymbol{D}$.
\end{defn}

\begin{con}\label{cons_new_quiv_G}
Given a collection of weights $\boldsymbol\theta$, we use a similar approach as Construction \ref{cons_new_quiv} to construct a quasi-projective variety $\mathscr{R}(\mathscr{Q}^{\boldsymbol{D}},\mathscr{I}^{\boldsymbol{D}}, \boldsymbol\theta)$. 

Following the same notation as in Construction \ref{cons_new_quiv}, let $\mathscr{R}(\mathscr{Q}^{\boldsymbol{D}},G)$ be the set of elements $(\phi_a)_{a \in \mathscr{Q}^{\boldsymbol{D}}_1}$, where $\phi_a \in G$ for every $a \in \mathscr{Q}^{\boldsymbol{D}}_1$. Clearly, $\mathscr{R}(\mathscr{Q}^{\boldsymbol{D}},G)$ is an affine variety. We define a $(\prod_{x \in \boldsymbol{D}} G_x)$-action on $\mathscr{R}(\mathscr{Q}^{\boldsymbol{D}},G)$ as follows
\begin{align*}
    (g_x) \cdot (\phi_{a_i},\phi_{b_j},\phi_{c_{x,1}},\phi_{c_{x,2}}) := (\phi_{a_i},\phi_{b_j},  \phi_{c_{x,1}}g_x^{-1}, g_x \phi_{c_{x,2}}),
\end{align*}
where $G_x = G$ for each $x \in \boldsymbol{D}$. Denote by $\boldsymbol{P}$ the collection of parabolic subgroups $\{P_{\theta_x}, x \in \boldsymbol{D}\}$ given by $\boldsymbol\theta$. We define a closed subset
\begin{align*}
    \mathscr{R}(\mathscr{Q}^{\boldsymbol{D}},\boldsymbol{P}) \subseteq \mathscr{R}(\mathscr{Q}^{\boldsymbol{D}},G),
\end{align*}
whose elements $\phi= (\phi_a)_{a \in \mathscr{Q}^{\boldsymbol{D}}_1}$ satisfy the condition 
\begin{align*}
    \phi_{c_{x,1}} \in L_{\theta_x}, \quad \phi_{c_{x,2}} \in P_{-\theta_x}
\end{align*}
for $x \in \boldsymbol{D}$. 

Denote by $\mathscr{R}'(\mathscr{Q}^{\boldsymbol{D}},\boldsymbol{P})$ the fiber product
\begin{center}
\begin{tikzcd}
\mathscr{R}'(\mathscr{Q}^{\boldsymbol{D}},\boldsymbol{P}) \arrow[r, dotted] \arrow[d,dotted] & \mathscr{R}(\mathscr{Q}^{\boldsymbol{D}},\boldsymbol{P}) \arrow[d] \\
(\prod_{x \in \boldsymbol{D}} G_x) \times \mathscr{R}(\mathscr{Q}^{\boldsymbol{D}},G) \arrow[r] & \mathscr{R}(\mathscr{Q}^{\boldsymbol{D}},G) \ ,
\end{tikzcd}
\end{center}
which parametrizes tuples
\begin{align*}
    ( (g_x) , (\phi_{a_i},\phi_{b_j},\phi_{c_{x,1}},\phi_{c_{x,2}}) ) 
\end{align*}
such that
\begin{align*}
    \phi_{c_{x,1}} g^{-1}_x \in L_{\theta_x}, \ \ g_x \phi_{c_{x,2}} \in P_{-\theta_x}.
\end{align*}
Then, we define
\begin{align*}
    \mathscr{R}(\mathscr{Q}^{\boldsymbol{D}},[\boldsymbol{P}]) := \mathscr{R}'(\mathscr{Q}^{\boldsymbol{D}},\boldsymbol{P})|_{\mathscr{R}(\mathscr{Q}^{\boldsymbol{D}},G)},
\end{align*}
which parametrizes tuples $(\phi_{a_i},\phi_{b_j},\phi_{c_{x,1}},\phi_{c_{x,2}})$ such that for each $x \in \boldsymbol{D}$, there exists $g_x \in G_x = G$ satisfying 
\begin{align*}
    \phi_{c_{x,1}} g^{-1}_x \in L_{\theta_x}, \ \ g_x \phi_{c_{x,2}} \in P_{-\theta_x}.
\end{align*}
By adding the relation $\mathscr{I}^{\boldsymbol{D}}$, we obtain a closed subvariety $\mathscr{R}(\mathscr{Q}^{\boldsymbol{D}} , \mathscr{I}^{\boldsymbol{D}} , [\boldsymbol{P}]) \subseteq \mathscr{R}(\mathscr{Q}^{\boldsymbol{D}}, [\boldsymbol{P}])$. Summing up, we obtain a quasi-projective variety $\mathscr{R}(\mathscr{Q}^{\boldsymbol{D}},\mathscr{I}^{\boldsymbol{D}}, [\boldsymbol{P}])$. Note that in the above construction, we can take another collection $\boldsymbol{P}'=\{P'_x, \ x \in \boldsymbol{D}\}$ such that $P'_x$ is conjugate to $P_{\theta_x}$ and define $\mathscr{R}(\mathscr{Q}^{\boldsymbol{D}}, \mathscr{I}^{\boldsymbol{D}},[\boldsymbol{P}'])$ in the same way. It is easy to check 
\begin{align*}
    \mathscr{R}(\mathscr{Q}^{\boldsymbol{D}},\mathscr{I}^{\boldsymbol{D}},[\boldsymbol{P}]) \cong \mathscr{R}(\mathscr{Q}^{\boldsymbol{D}},\mathscr{I}^{\boldsymbol{D}},[\boldsymbol{P}']). 
\end{align*}
Therefore, we would like to use the notation
\begin{align*}
    \mathscr{R}(\mathscr{Q}^{\boldsymbol{D}},\mathscr{I}^{\boldsymbol{D}},\boldsymbol\theta) := \mathscr{R}(\mathscr{Q}^{\boldsymbol{D}},\mathscr{I}^{\boldsymbol{D}},[\boldsymbol{P}]).
\end{align*}
Now we define a reductive group 
\begin{align*}
    G_{\boldsymbol\theta} = \prod_{v \in \mathscr{Q}^{\boldsymbol{D}}_0} G_v := G \times \prod_{x \in \boldsymbol{D}} L_{\theta_x},
\end{align*}
where $G_{v_0}= G$ and $G_{v_x} = L_{\theta_x}$ for $x \in \boldsymbol{D}$. The $G_{\boldsymbol{\theta}}$-action on $\mathscr{R}(\mathscr{Q}^{\boldsymbol{D}},\mathscr{I}^{\boldsymbol{D}},\boldsymbol{\theta})$ is given by
\begin{align*}
    (g_{v_0},g_{v_x}) \cdot (\phi_{a_i}, \phi_{b_j}, \phi_{c_{x,1}} , \phi_{c_{x,2}}) = (g_{v_0} \phi_{a_i} g_{v_0}^{-1}, g_{v_0} \phi_{b_j} g_{v_0}^{-1}, g_{v_x} \phi_{c_{x,1}} g_{v_0}^{-1}, g_{v_0} \phi_{c_{x,2}} g_{v_x}^{-1}).
\end{align*}
\end{con}

\begin{prop}\label{prop_para_and_orbit_G}
There exists a one-to-one correspondence between $G_{\boldsymbol\theta}$-orbits in $\mathscr{R}(\mathscr{Q}^{\boldsymbol{D}},\mathscr{I}^{\boldsymbol{D}},\boldsymbol{\theta})$ and isomorphism classes of $\boldsymbol\theta$-filtered $G$-local systems on $X_{\boldsymbol{D}}$.
\end{prop}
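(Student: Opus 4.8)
The plan is to transcribe the proof of Proposition \ref{prop_para_and_orbit} to the reductive setting, replacing ${\rm GL}_n(k)$ by $G$, the Levi $L_x$ by $L_{\theta_x}$, and the single parabolic $P_x$ by the opposite pair $(P_{-\theta_x},L_{\theta_x})$ dictated by Construction \ref{cons_new_quiv_G}. Concretely, I would build a $G_{\boldsymbol\theta}$-equivariant surjection $\Phi\colon \mathscr{R}(\mathscr{Q}^{\boldsymbol{D}},\mathscr{I}^{\boldsymbol{D}},\boldsymbol\theta)\to {\rm Hom}(\pi_1(X_{\boldsymbol{D}}),G)$ and show it descends to a bijection on orbit spaces, where $G_{\boldsymbol\theta}$ acts on the source and acts on the target by conjugation through the projection $G_{\boldsymbol\theta}\to G$, $(g_v)_v\mapsto g_{v_0}$. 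The real content is that the extra arrows $c_{x,1},c_{x,2}$ record exactly the parabolic reduction (the filtered structure) at each puncture, which is the $G$-analogue of the filtration $\mathscr{L}_{x,\bullet}$, so that $G_{\boldsymbol\theta}$-orbits match isomorphism classes of $\boldsymbol\theta$-filtered $G$-local systems.

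First I would define $\Phi$. Given $\phi=(\phi_a)_{a\in\mathscr{Q}^{\boldsymbol{D}}_1}$ with all $\phi_a\in G$, set $\rho(a_i)=\phi_{a_i}$, $\rho(b_j)=\phi_{b_j}$ and $\rho(c_x)=\phi_{c_x}=\phi_{c_{x,2}}\phi_{c_{x,1}}$. The relation $\mathscr{I}^{\boldsymbol{D}}$ reads $(\prod_{i}[\phi_{a_i},\phi_{b_i}])(\prod_{x}\phi_{c_x})={\rm id}$, which is precisely the defining relation of $\pi_1(X_{\boldsymbol{D}})$, so $\rho$ is a well-defined homomorphism, i.e. a $G$-local system. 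To see it is $\boldsymbol\theta$-filtered I use the defining condition of $\mathscr{R}(\mathscr{Q}^{\boldsymbol{D}},\mathscr{I}^{\boldsymbol{D}},\boldsymbol\theta)$: for each $x$ there is $g_x\in G$ with $\phi_{c_{x,1}}g_x^{-1}\in L_{\theta_x}$ and $g_x\phi_{c_{x,2}}\in P_{-\theta_x}$; then $g_x\rho(c_x)g_x^{-1}=(g_x\phi_{c_{x,2}})(\phi_{c_{x,1}}g_x^{-1})\in P_{-\theta_x}$ because $L_{\theta_x}\subseteq P_{-\theta_x}$, and by the Borel covering theorem $\rho(c_x)$ is then conjugate into $P_{\theta_x}$ as well, the coset $g_xP_{\theta_x}$ being the reduction carried at $x$. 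Surjectivity I would build by hand: given any $\rho$, the Borel covering theorem yields $h_x$ with $h_x\rho(c_x)h_x^{-1}\in P_{-\theta_x}$, and then $\phi_{a_i}=\rho(a_i)$, $\phi_{b_j}=\rho(b_j)$, $\phi_{c_{x,1}}=h_x$, $\phi_{c_{x,2}}=\rho(c_x)h_x^{-1}$ (with witness $g_x=h_x$) is a point of $\mathscr{R}(\mathscr{Q}^{\boldsymbol{D}},\mathscr{I}^{\boldsymbol{D}},\boldsymbol\theta)$ mapping to $\rho$.

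Next I would check equivariance. Writing $(\phi'_a)=(g_v)\cdot(\phi_a)$ with $(g_v)\in G_{\boldsymbol\theta}$ and reading off the action of Construction \ref{cons_new_quiv_G}, one finds $\rho'(a_i)=g_{v_0}\rho(a_i)g_{v_0}^{-1}$, $\rho'(b_j)=g_{v_0}\rho(b_j)g_{v_0}^{-1}$, and $\rho'(c_x)=(g_{v_0}\phi_{c_{x,2}}g_{v_x}^{-1})(g_{v_x}\phi_{c_{x,1}}g_{v_0}^{-1})=g_{v_0}\rho(c_x)g_{v_0}^{-1}$, the factors $g_{v_x}\in L_{\theta_x}$ cancelling. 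Hence $\Phi$ intertwines the $G_{\boldsymbol\theta}$-action with $G$-conjugation, so it carries $G_{\boldsymbol\theta}$-orbits into $G$-orbits and the induced map on orbit sets is surjective.

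The step I expect to be the main obstacle is the injectivity of the induced map on orbits, which is the precise assertion that the data beyond $\rho$ is faithfully the parabolic reduction and that its residual gauge is exactly $\prod_{x}L_{\theta_x}$. Given $\Phi(\phi')=g\,\Phi(\phi)\,g^{-1}$, I would first act by $(g,{\rm id},\dots,{\rm id})\in G_{\boldsymbol\theta}$ to reduce to $\Phi(\phi)=\Phi(\phi')=:\rho$; then $\phi$ and $\phi'$ agree on the $a_i,b_j$ and give two factorizations $\phi_{c_{x,2}}\phi_{c_{x,1}}=\phi'_{c_{x,2}}\phi'_{c_{x,1}}=\rho(c_x)$, each satisfying the constraint of Construction \ref{cons_new_quiv_G}. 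The task is to produce an element $g_{v_0}\in G$ stabilizing $\rho$ and elements $g_{v_x}\in L_{\theta_x}$ with $\phi'_{c_{x,1}}=g_{v_x}\phi_{c_{x,1}}g_{v_0}^{-1}$ and $\phi'_{c_{x,2}}=g_{v_0}\phi_{c_{x,2}}g_{v_x}^{-1}$. The delicate point, and where a general reductive $G$ demands genuine care beyond the ${\rm GL}_n$ bookkeeping, is to check that the element intertwining the two reductions really lands in $L_{\theta_x}$ rather than merely in its normalizer; I would settle this using the matching of the witnesses $g_x$ together with the self-normalizing property of Levi subgroups, in the same spirit as Lemma \ref{lem_cochar_equal} and Lemma \ref{lem_S_equiv_and_GIT_equiv}, thereby exhibiting $(g_v)\in G_{\boldsymbol\theta}$ carrying $\phi$ to $\phi'$ and completing the orbit bijection.
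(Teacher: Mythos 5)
Your first three steps coincide with what the paper actually writes: its proof of Proposition \ref{prop_para_and_orbit_G} consists solely of the forward map $\phi\mapsto\rho$ together with the witness computation $g_x\rho(c_x)g_x^{-1}=(g_x\phi_{c_{x,2}})(\phi_{c_{x,1}}g_x^{-1})\in P_{-\theta_x}$, deferring everything else to the ``analogue'' Proposition \ref{prop_para_and_orbit}, whose proof contains exactly your equivariance computation and your surjection (your explicit preimage $\phi_{c_{x,1}}=h_x$, $\phi_{c_{x,2}}=\rho(c_x)h_x^{-1}$ with witness $g_x=h_x$ is the one implicit there). The genuine gap is precisely the step you flag as the main obstacle, and your proposed repair does not close it: the claim that the intertwiner lands in $L_{\theta_x}$ is false in general. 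After normalizing witnesses you may assume $g_x=\phi_{c_{x,1}}=:h_x$, so the fibre of your map $\Phi$ over a fixed $\rho$ at the puncture $x$ is $\{h_x\in G:\ h_x\rho(c_x)h_x^{-1}\in P_{-\theta_x}\}$, with residual gauge $h_x\mapsto g_{v_x}h_xg_{v_0}^{-1}$, $g_{v_x}\in L_{\theta_x}$ and $g_{v_0}$ in the centralizer of $\rho$. Now take $\rho(c_x)=m\in P_{-\theta_x}$ (even $m=e$) and compare the two points $(\phi_{c_{x,1}},\phi_{c_{x,2}})=(e,m)$ and $(u,mu^{-1})$ for a nontrivial $u$ in the unipotent radical $U_{-\theta_x}$ of $P_{-\theta_x}$; both lie in $\mathscr{R}(\mathscr{Q}^{\boldsymbol{D}},\mathscr{I}^{\boldsymbol{D}},\boldsymbol\theta)$ (witnesses $e$ and $u$) and map to the same $\rho$. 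Your intertwiner $u=\phi'_{c_{x,1}}\phi_{c_{x,1}}^{-1}$ is unipotent, hence lies neither in $L_{\theta_x}$ nor in its normalizer, and if the centralizer of $\rho$ consists of central elements (which occurs for suitable $\rho$ once $g\geq 2$), the equation $u=g_{v_x}g_{v_0}^{-1}$ forces $u\in L_{\theta_x}\cdot Z(G)=L_{\theta_x}$, i.e. $u=e$, a contradiction: the two points lie in distinct $G_{\boldsymbol\theta}$-orbits.

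The structural reason self-normalization of the Levi cannot rescue you is that the slack is exactly $L_{\theta_x}\backslash P_{-\theta_x}\cong U_{-\theta_x}$: a quiver point remembers, beyond $\rho$ and the parabolic reduction given by the coset $P_{-\theta_x}h_x$, the finer coset $L_{\theta_x}h_x$ --- equivalently a choice of Levi subgroup of the reduced parabolic --- and this extra unipotent datum is invisible in the isomorphism class of the $\boldsymbol\theta$-filtered $G$-local system. The arguments you cite as models do not apply: Lemma \ref{lem_cochar_equal} and Lemma \ref{lem_S_equiv_and_GIT_equiv} manipulate elements already known to lie in $L_x$ (limits along cocharacters, or products of four elements of $L_x$), whereas your intertwiner carries no such constraint. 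To be fair, the paper never carries this step out either --- the proof of Proposition \ref{prop_para_and_orbit} also establishes only well-definedness, equivariance and surjectivity before asserting the bijection --- so your candor locates a real subtlety rather than a routine verification; but as written, your final paragraph is a plan resting on a false claim, and any honest completion must either enlarge the gauge group at $v_x$ from $L_{\theta_x}$ to $P_{-\theta_x}$ (killing the unipotent slack, but incompatible with the reductive GIT setup and the character $\chi_{\boldsymbol\theta}$) or build the extra Levi datum into the notion of isomorphism being matched.
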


\begin{proof}
This proposition is an analogue of Proposition \ref{prop_para_and_orbit}, and we only give the construction of a $\boldsymbol\theta$-filtered $G$-local system from a given point $\phi$ in $\mathscr{R}(\mathscr{Q}^{\boldsymbol{D}},\mathscr{I}^{\boldsymbol{D}},\boldsymbol\theta)$. Let $(\phi_a)_{a \in \mathscr{Q}^{\boldsymbol{D}}_1} \in \mathscr{R}(\mathscr{Q}^{\boldsymbol{D}},\mathscr{I}^{\boldsymbol{D}},\boldsymbol\theta)$ be a point. Then there exists $g_x \in G$ such that
\begin{align*}
    \phi_{c_{x,1}} g^{-1}_x \in L_{\theta_x}, \quad g_x \phi_{c_{x,2}} \in P_{-\theta_x}
\end{align*}
for each $x \in \boldsymbol{D}$. We define a fundamental group representation $\rho: \pi_1(X_{\boldsymbol{D}}) \rightarrow G$, i.e. a $G$-local system, as follows
\begin{align*}
\rho(a_i) = \phi_{a_i}, \quad \rho(b_j) = \phi_{b_j}, \quad \rho(c_x) = \phi_{c_x} = \phi_{c_{x,2}} \phi_{c_{x,1}}.
\end{align*}
Clearly, 
\begin{align*}
    g_x \phi_{c_x} g_x^{-1} = g_x \phi_{c_{x,2}} \phi_{c_{x,1}} g^{-1}_x \in P_{-\theta_x} .
\end{align*}
Thus, $\rho$ is a $\boldsymbol\theta$-filtered $G$-local system.
\end{proof}

\subsection{R-stability Condition}\label{subsect_stab_R}
Let $P$ be a parabolic subgroup with a given maximal torus $T_P$. Given a $G$-local system $\rho : \pi_1(X_{\boldsymbol{D}}) \rightarrow G$, a parabolic subgroup $P$ is \emph{compatible} with $\rho$, if there is a lifting
\begin{center}
\begin{tikzcd}
& & P \arrow[d] \\
\pi_1(X_{\boldsymbol{D}}) \arrow[urr, dotted] \arrow[rr,"\rho"] & & G
\end{tikzcd}
\end{center}
In other words, the representation $\rho$ is well-defined when restricted to $P$. Given a collection of weights $\boldsymbol\theta$, suppose that $\rho$ is $\boldsymbol\theta$-filtered and compatible with $P$. Let $\chi$ be a character of $P$ and we can choose $g_x \in G$ such that $g_x \rho(c_x) g^{-1}_x \in P_{-\theta_x}$ and $T = g_x T_P g_x^{-1}$. The pairing $(\theta_x , \chi )$ in this case is defined as follows:
\begin{align*}
    (\theta_x , \chi ) := \langle g^{-1}_x \theta_x g_x , \chi \rangle = \langle \theta_x , g_x \chi g_x^{-1} \rangle.
\end{align*}
Moreover, the pairing $(\theta_x , \chi )$ does not depend on $g_x$ we choose. We briefly explain the reason as follows. Suppose that both $g_x$ and $g'_x$ satisfy the above conditions, i.e.
\begin{align*}
    T = g_x T_P g_x^{-1} = g'_x T_P (g'_x)^{-1}, \quad g_x \rho(c_x) g^{-1}_x, g'_x \rho(c_x) (g'_x)^{-1} \in P_{-\theta_x}.
\end{align*}
Since $g_x \rho(c_x) g^{-1}_x, g'_x \rho(c_x) (g'_x)^{-1} \in P_{-\theta_x}$ are conjugate, there exists $p_x \in P_{-\theta_x}$ such that 
\begin{align*}
    p_x (g_x \rho(c_x) g^{-1}_x) p_x^{-1}  = g'_x \rho(c_x) (g'_x)^{-1}.
\end{align*}
Furthermore, $(g'_x)^{-1} p_x g_x \in P$ because $\rho(c_x) \in P$ and the normalizer of $P$ is itself. Summing up, we have
\begin{align*}
    p_x \in P_{-\theta_x}, \quad (g'_x)^{-1} p_x g_x \in P.
\end{align*}
Note that $p_x \in P_{-\theta_x}$ and by Levi decomposition, we have $p_x = l_x u_x$, where $l_x$ is the Levi part and $u_x$ is the unipotent part. Then,
\begin{align*}
    \langle p_x^{-1} \theta_x p_x, g_x \chi g_x^{-1} \rangle = \langle u_x^{-1} l_x^{-1} \theta_x l_x u_x, g_x \chi g_x^{-1} \rangle = \langle u_x^{-1} \theta_x u_x, g_x \chi g_x^{-1} \rangle = \langle \theta_x, g_x \chi g_x^{-1} \rangle,
\end{align*}
where the second equality holds because $l_x$ commutes with $\theta_x$ and the third equality holds because the pairing does not depend on the unipotent part. Since characters are preserved under conjugation, we have
\begin{align*}
    \chi = ((g'_x)^{-1} p_x g_x) \chi (g_x^{-1} p_x^{-1} g'_x).
\end{align*}
Therefore,
\begin{align*}
    \langle \theta_x , g_x \chi g_x^{-1} \rangle = \langle \theta_x , p_x g_x \chi g_x^{-1} p_x^{-1} \rangle = \langle \theta_x , g'_x \chi (g'_x)^{-1} \rangle.
\end{align*}
This actually shows that the pairing $(\theta_x,\chi)$ does not depend on $g_x$ we choose.

We follow Ramanathan's definition of the stability condition on principal bundles \cite{Ram75,Ram96a} to define the stability condition of filtered $G$-local systems, which is called the \emph{$R$-stability condition}. Moreover, we refer the reader to \cite[\S 2]{Ram75} for the definition of \emph{dominant} (resp. \emph{anti-dominant}) characters. When $G={\rm GL}_n(k)$ and the degree is zero, it is equivalent to Definition \ref{defn_fls_stab}.

\begin{defn}\label{defn_stab_cond_G}
A $\boldsymbol\theta$-filtered $G$-local system $\rho$ is \emph{$R$-semistable} (resp. \emph{$R$-stable}), if for
\begin{itemize}
    \item any proper parabolic subgroup $P$ compatible with $\rho$;
    \item any nontrivial anti-dominant character $\chi$ of $P$, which is trivial on the center of $P$,
\end{itemize}
we have
\begin{align*}
\deg^{\rm loc} \rho(P,\chi):= (\boldsymbol\theta,\chi ) = \sum_{x \in \boldsymbol{D}} ( \theta_x, \chi ) \geq 0 \quad (\text{resp. } > 0) .
\end{align*}
\end{defn}

\begin{defn}
	A $\boldsymbol\theta$-filtered $G$-local system $\rho$ is of \emph{degree zero}, if for any character $\chi$ of $G$, we have $( \boldsymbol\theta , \chi ) = 0$. Note that when $G$ is semisimple, this condition is trivial.
\end{defn}

When we discuss stability conditions, both Proposition \ref{prop_king2.5} and Definition \ref{defn_stab_cond_G} require that characters act trivially on the center of $G$. Therefore, everything in this subsection is given under the assumption that $G$ is semisimple apart from the statement of Proposition \ref{prop_stab_local_sys_and_quiv_rep_G}.

As a free $\mathbb{Q}$-module, let $\{e_i\}$ (resp. $\{e^*_i\}$) be a basis of ${\rm Hom}(\mathbb{G}_m,T) \otimes_{\mathbb{Z}} \mathbb{Q}$ (resp. ${\rm Hom}(T, \mathbb{G}_m) \otimes_{\mathbb{Z}} \mathbb{Q}$) such that $\langle e_i, e^*_j \rangle = \delta_{ij}$, where $\{e_i\}$ is regarded as the collection of simple coroots and $\{e_i^*\}$ is regarded as the collection of fundamental weights. Given a cocharacter $\mu$, a character $\chi_\mu$ is uniquely determined by the following conditions
\begin{align*}
    \langle e_i, \chi_\mu \rangle = \langle \mu, e^*_i \rangle
\end{align*}
for each $i$. Similarly, given a character $\chi$, we define a cocharacter $\mu_{\chi}$ satisfying
\begin{align*}
    \langle \mu_\chi, e^*_i \rangle = \langle e_i, \chi \rangle
\end{align*}
for each $i$. Clearly,
\begin{align*}
    \langle \mu, \chi \rangle = \langle \mu_\chi, \chi_\mu \rangle.
\end{align*}
Moreover, a cocharacter $\mu$ determines a parabolic subgroup $P_\mu$ (see \S\ref{subsect_equiv_stab_local}). 

\begin{lem}\label{lem_char_cochar}
Given a cocharacter $\mu$, the character $\chi_\mu$ is a dominant character of $P_\mu$. On the other hand, given a character $\chi$, if it is a dominant character of some parabolic subgroup $P$, then $P \subseteq P_{\mu_\chi}$.
\end{lem}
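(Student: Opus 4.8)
The plan is to reduce both assertions to a single sign-compatibility between the two pairings: for every root $\alpha$ of $G$, the numbers $\langle\alpha^\vee,\chi_\mu\rangle$ and $\langle\mu,\alpha\rangle$ have the same sign and vanish simultaneously. Granting this, everything follows from the root-theoretic descriptions already in place. Recall (see \S\ref{subsect_equiv_stab_local}) that $P_\mu$ is generated by $T$ together with the $U_\alpha$ for which $\langle\mu,\alpha\rangle\geq 0$, its Levi being cut out by $\langle\mu,\alpha\rangle=0$ and its unipotent radical by $\langle\mu,\alpha\rangle>0$; and (see \S\ref{subsect_stab_R}) a weight is a character of a parabolic exactly when it kills the coroots of the Levi, and is dominant exactly when it pairs non-negatively with the coroots of the unipotent radical, equivalently with the simple coroots of $P$ lying outside the Levi.

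First I would establish the sign-compatibility, which is the crux. The defining relations $\langle e_i,\chi_\mu\rangle=\langle\mu,e_i^*\rangle$ exhibit $\mu\mapsto\chi_\mu$ as the isomorphism ${\rm Hom}(\mathbb{G}_m,T)\otimes\mathbb{Q}\xrightarrow{\ \sim\ }{\rm Hom}(T,\mathbb{G}_m)\otimes\mathbb{Q}$ induced by a fixed $W$-invariant inner product on ${\rm Hom}(T,\mathbb{G}_m)\otimes\mathbb{Q}$ (and its dual), with inverse $\chi\mapsto\mu_\chi$. Under such an identification every coroot $\alpha^\vee$ is sent to a strictly positive multiple $c_\alpha\,\alpha$ of the corresponding root, where one may take $c_\alpha=\tfrac{2}{(\alpha,\alpha)}>0$. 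Pairing with $\mu$ then gives $\langle\alpha^\vee,\chi_\mu\rangle=c_\alpha\,\langle\mu,\alpha\rangle$, and symmetrically $\langle\mu_\chi,\alpha\rangle=c_\alpha^{-1}\,\langle\alpha^\vee,\chi\rangle$; in both cases the two pairings share a sign and vanish together.

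For the first assertion I apply sign-compatibility to $\chi_\mu$. If $\alpha$ is a root of the Levi $L_\mu$, then $\langle\mu,\alpha\rangle=0$, hence $\langle\alpha^\vee,\chi_\mu\rangle=0$, so $\chi_\mu$ kills all Levi coroots and is a genuine character of $P_\mu$. If $\alpha$ is a root of the unipotent radical, then $\langle\mu,\alpha\rangle>0$, hence $\langle\alpha^\vee,\chi_\mu\rangle>0$; in particular $\chi_\mu$ pairs non-negatively with the simple coroots of $P_\mu$ outside $L_\mu$, which is precisely the condition of \S\ref{subsect_stab_R} that $\chi_\mu$ be a dominant character of $P_\mu$. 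Here one must track the sign convention: since $P_\mu$ is defined by $\lim_{t\to 0}$, the roots with $\langle\mu,\alpha\rangle>0$ are the positive roots of $P_\mu$, so the conclusion is dominance (not anti-dominance).

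For the second assertion I would run the implication backwards. After conjugating we may assume $P\supseteq B$ is standard with Levi indexed by some $I$; by definition of dominance, $\langle\alpha^\vee,\chi\rangle=0$ for every root $\alpha$ of the Levi and $\langle\alpha^\vee,\chi\rangle\geq 0$ for every root of the unipotent radical. Feeding this into $\langle\mu_\chi,\alpha\rangle=c_\alpha^{-1}\langle\alpha^\vee,\chi\rangle$ yields $\langle\mu_\chi,\alpha\rangle=0$ for Levi roots of either sign and $\langle\mu_\chi,\alpha\rangle\geq 0$ for unipotent-radical roots; hence $\langle\mu_\chi,\alpha\rangle\geq 0$ for every root $\alpha$ of $P$, i.e. $U_\alpha\subseteq P_{\mu_\chi}$ for all such $\alpha$, which is exactly the inclusion $P\subseteq P_{\mu_\chi}$ (it can be strict precisely when some fundamental-weight coefficient of $\chi$ vanishes, enlarging the Levi of $P_{\mu_\chi}$). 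The main obstacle is the crux step of paragraph two: verifying that the correspondence $\mu\leftrightarrow\chi_\mu$ really is the invariant-form identification, so that roots and coroots match up to a strictly positive scalar, and then carrying the constant $c_\alpha=\tfrac{2}{(\alpha,\alpha)}$ correctly through the non-simply-laced case while matching the paper's dominant/anti-dominant convention to the $\lim_{t\to 0}$ direction defining $P_\mu$ and $P_\theta$.
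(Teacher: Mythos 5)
Your overall route is in spirit the same as the paper's: the published proof is a short contradiction argument that compares the sign of the fundamental-weight coefficients of $\chi_\mu$ with the sign of $\langle\mu,r_0\rangle$ at simple roots $r_0\in\mathcal{R}_\mu$, which is exactly your ``sign-compatibility'' specialized to simple roots, and it likewise dispatches the second assertion ``in a similar way.'' The problem is the crux step you yourself flag and leave unverified: the claim that $\mu\mapsto\chi_\mu$ is the identification induced by a $W$-invariant inner product, sending each coroot $\alpha^\vee$ to a positive multiple $c_\alpha\alpha$. For the definition actually given in the paper this is \emph{false}. The paper defines $\chi_\mu$ by $\langle e_i,\chi_\mu\rangle=\langle\mu,e_i^*\rangle$ with $\{e_i\}$ the simple coroots and $\{e_i^*\}$ the fundamental weights, i.e.\ it is the coordinate-transport map $\sum_i m_i\alpha_i^\vee\mapsto\sum_i m_i\omega_i$, not an invariant-form map. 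Concretely, for $G=\mathrm{SL}_3$ and $\mu=\alpha_1^\vee$ one gets $\chi_\mu=\omega_1$, which is not a multiple of $\alpha_1=2\omega_1-\omega_2$, and $\langle\alpha_2^\vee,\chi_\mu\rangle=0$ while $\langle\mu,\alpha_2\rangle=-1$, so your identity $\langle\alpha^\vee,\chi_\mu\rangle=c_\alpha\langle\mu,\alpha\rangle$ fails even at simple roots. Everything downstream in your argument (both implications, the $\lim_{t\to 0}$ sign convention, the possible strictness of $P\subseteq P_{\mu_\chi}$) is correct, but it is all conditional on a claim that contradicts the stated definition; as a blind proof of Lemma \ref{lem_char_cochar} from the paper's conventions, this is a genuine gap.

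In fairness, the published proof has the same soft spot: reading the definition literally, the coefficient of $\omega_{i}$ in $\chi_\mu$ is $\langle\mu,\omega_i\rangle$, not $\langle\mu,\alpha_i\rangle$, and then the lemma's first assertion itself breaks --- for $\mu=2\alpha_1^\vee+\alpha_2^\vee$ in $\mathrm{SL}_3$ one finds $\chi_\mu=2\omega_1+\omega_2$ with $\langle\alpha_2^\vee,\chi_\mu\rangle=1\neq 0$, although $\alpha_2$ is a Levi root of $P_\mu$, so $\chi_\mu$ is not even a character of $P_\mu$. The paper's contradiction argument tacitly reads the definition as $\chi_\mu=\sum_i\langle\mu,\alpha_i\rangle\omega_i$, for which simple-root sign-compatibility holds by fiat. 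Your invariant-form reading is arguably the more robust repair, since unlike that naive fix it also preserves the symmetry $\langle\mu,\chi\rangle=\langle\mu_\chi,\chi_\mu\rangle$ invoked in the proof of Proposition \ref{prop_stab_local_sys_and_quiv_rep_G} in non-simply-laced types (where $\langle\alpha_i^\vee,\alpha_j\rangle\neq\langle\alpha_j^\vee,\alpha_i\rangle$). But to make your proof complete you must either prove that the paper's defining conditions yield the invariant-form identification (they do not) or explicitly substitute a corrected definition and verify the sign-compatibility for it; as it stands, the pivotal identity is assumed, not established, and for the paper's stated $\chi_\mu$ it is false.
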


\begin{proof}
Let $r$ be a root. By definition of $P_\mu$, we have $\langle \mu, r \rangle \geq 0$ for any root $r \in \mathcal{R}_{\mu}$. Suppose that $\chi_\mu$ is not dominant, then there exists a simple root $r_0 \in \mathcal{R}_\mu$ such that the coefficient of the corresponding fundamental weight $\lambda_{r_0}$ in $\chi_\mu$ is negative. This implies that $\langle \mu, r_0 \rangle <0$, which is a contradiction. The second statement is proved in a similar way.
\end{proof}

Given a collection of weights $\boldsymbol\theta$, let $d$ be the least common multiple of the denominators of weights in $\boldsymbol\theta$. Thus, $d \theta_x$ is a cocharacter for $x \in \boldsymbol{D}$. We define a character $\chi_{\boldsymbol{\theta}}: G_{\boldsymbol\theta} \rightarrow \mathbb{G}_m$ as a product
\begin{align*}
    \chi_{\boldsymbol{\theta}} : = \chi_{v_0} \cdot \prod_{x \in \boldsymbol{D}} \chi_{v_x},
\end{align*}
where $\chi_{v_0}$ is the trivial character and $\chi_{v_x} = \chi_{ - d \theta_x}$ for each $x \in \boldsymbol{D}$, which is a character of $P_{\theta_x}$ (and thus $L_{\theta_x}$) by Lemma \ref{lem_char_cochar} .

\begin{prop}\label{prop_stab_local_sys_and_quiv_rep_G}
Let $\phi = (\phi_a)_{a \in \mathscr{Q}^{\boldsymbol{D}}_1} \in \mathscr{R}(\mathscr{Q}^{\boldsymbol{D}},\mathscr{I}^{\boldsymbol{D}},\boldsymbol\theta)$ be a point. Denote by $\rho$ the corresponding $\boldsymbol\theta$-filtered $G$-local system. Then, $\rho$ is $R$-semistable (resp. $R$-stable) of degree zero if and only if the point $\phi$ is $\chi_{\boldsymbol\theta}$-semistable (resp. $\chi_{\boldsymbol\theta}$-stable).
\end{prop}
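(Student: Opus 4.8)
The plan is to mirror the proof of Proposition~\ref{prop_stab_local_sys_and_quiv_rep}, trading the explicit subspace filtrations for the root-theoretic pairing of Definition~\ref{defn_stab_cond_G}, and to read off the equivalence from King's numerical criterion (Proposition~\ref{prop_king2.5}). First I would prove the $G$-analogues of Lemmas~\ref{lem_cochar_equal} and~\ref{lem_p_mu}. Namely, for a cocharacter $\mu=(\mu_v)_{v\in\mathscr{Q}^{\boldsymbol{D}}_0}$ of $G_{\boldsymbol\theta}$ such that $\lim_{t\to 0}\mu(t)\cdot\phi$ exists, the existence of the limits along $c_{x,1}$ and $c_{x,2}$ produces, for each $x\in\boldsymbol{D}$, an element $g_x\in G$ with $\phi_{c_{x,1}}g_x^{-1}\in L_{\theta_x}$, $g_x\phi_{c_{x,2}}\in P_{-\theta_x}$, and $\mu_{v_x}=g_x\mu_{v_0}g_x^{-1}$; the limits along $a_i,b_j$ then force $\rho(a_i),\rho(b_j),\rho(c_x)\in P:=P_{\mu_{v_0}}$, so that $P$ is compatible with $\rho$ in the sense of \S\ref{subsect_stab_R}. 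The arguments are verbatim those of the $\mathrm{GL}_n$ case, only now read through the limit criterion for one-parameter subgroups in a general reductive group. Conversely, given a parabolic $P$ compatible with $\rho$ and an anti-dominant character $\chi$ of $P$, Lemma~\ref{lem_char_cochar} and the dictionary $\chi\leftrightarrow\mu_\chi$ of \S\ref{subsect_stab_R} produce a cocharacter $\mu_{v_0}$ with $P_{\mu_{v_0}}=P$, which together with the $g_x$ above assembles into a cocharacter $\mu$ of $G_{\boldsymbol\theta}$ whose limit exists.

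The core is the computation matching the GIT pairing with the $R$-degree. With $\chi_{\boldsymbol\theta}=\chi_{v_0}\cdot\prod_{x}\chi_{v_x}$, $\chi_{v_0}$ trivial and $\chi_{v_x}=\chi_{-d\theta_x}$, the identity $\langle\mu,\chi_\nu\rangle=\langle\nu,\chi_\mu\rangle$ and the conjugation-invariance of the pairing recorded in \S\ref{subsect_stab_R} give
\[
\langle\mu,\chi_{\boldsymbol\theta}\rangle=\sum_{x\in\boldsymbol{D}}\langle\mu_{v_x},\chi_{-d\theta_x}\rangle=-d\sum_{x\in\boldsymbol{D}}\langle\theta_x,\chi_{\mu_{v_x}}\rangle=-d\,\deg^{\rm loc}\rho(P,\chi_{\mu_{v_0}}),
\]
where $\chi_{\mu_{v_0}}$ is the dominant character of $P$ attached to $\mu_{v_0}$ and the last equality uses $\chi_{\mu_{v_x}}=g_x\chi_{\mu_{v_0}}g_x^{-1}$ together with the very definition $\langle\theta_x,g_x\chi g_x^{-1}\rangle=\langle g_x^{-1}\theta_x g_x,\chi\rangle$. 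Since $-\chi_{\mu_{v_0}}$ is anti-dominant and $\deg^{\rm loc}\rho(P,\cdot)$ is linear, $\langle\mu,\chi_{\boldsymbol\theta}\rangle\geq 0$ is equivalent to $\deg^{\rm loc}\rho(P,-\chi_{\mu_{v_0}})\geq 0$. As $\mu$ runs over the cocharacters with existing limit, the pairs $(P_{\mu_{v_0}},-\chi_{\mu_{v_0}})$ run over all compatible parabolics and anti-dominant characters, so Proposition~\ref{prop_king2.5} turns the condition ``$\langle\mu,\chi_{\boldsymbol\theta}\rangle\geq 0$ whenever the limit exists'' into exactly the defining inequality of $R$-semistability.

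With this dictionary in hand I would finish as follows. For the semistable case, Proposition~\ref{prop_king2.5} characterizes $\chi_{\boldsymbol\theta}$-semistability of $\phi$ by $\chi_{\boldsymbol\theta}(\Delta)=\{1\}$ and $\langle\mu,\chi_{\boldsymbol\theta}\rangle\geq 0$ for all $\mu$ with existing limit; the degree-zero hypothesis on $\rho$ (which says $\langle\boldsymbol\theta,\chi\rangle=0$ for characters $\chi$ of $G$) is precisely what guarantees $\chi_{\boldsymbol\theta}(\Delta)=\{1\}$ and lets one restrict attention to characters trivial on the center, matching Definition~\ref{defn_stab_cond_G}. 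For the stable case I would invoke the stability clause of Proposition~\ref{prop_king2.5}: the cocharacters $\mu$ with existing limit and $\langle\mu,\chi_{\boldsymbol\theta}\rangle=0$ must take values in $\Delta$, which via the displayed computation translates into the strict inequality $\deg^{\rm loc}\rho(P,\chi)>0$ for every nontrivial anti-dominant $\chi$ on every proper compatible $P$, i.e.\ $R$-stability.

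The hard part will be the two-way dictionary between one-parameter subgroups of $G_{\boldsymbol\theta}$ with existing limit and pairs $(P,\chi)$ of a compatible parabolic and an anti-dominant character trivial on the center, together with the compatibility of the pairing computation with the choice-dependent definition of $\langle\theta_x,\chi\rangle$ in \S\ref{subsect_stab_R}. Concretely, one must check that the $g_x$ furnished by the limit-existence lemma can be taken to satisfy the Borel condition $B_{-\theta_x}\subseteq g_xPg_x^{-1}$ demanded in Definition~\ref{defn_stab_cond_G}, that the resulting pairing is independent of all choices, and that clearing denominators by $d$ and discarding central contributions (via degree zero) does not disturb the equivalence. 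Once these bookkeeping points are settled, the semistable and stable statements drop out of Proposition~\ref{prop_king2.5} exactly as above.
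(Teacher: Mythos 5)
Your proposal matches the paper's proof essentially step for step: the same $G$-analogue of Lemma~\ref{lem_cochar_equal} (and of Lemma~\ref{lem_p_mu} for compatibility with $P_\mu$), the same character--cocharacter dictionary via Lemma~\ref{lem_char_cochar}, the same pairing computation $\langle\mu,\chi_{\boldsymbol\theta}\rangle=-d\,\langle\boldsymbol\theta,\chi_{\mu_{v_0}}\rangle$ (and $\langle\boldsymbol\theta,\chi\rangle=\tfrac{1}{d}\langle\mu_{-\chi},\chi_{\boldsymbol\theta}\rangle$ in the converse direction), and the same appeal to Proposition~\ref{prop_king2.5}. The only slip is inessential: Lemma~\ref{lem_char_cochar} yields $P_{\mu_{-\chi}}\supseteq P$ rather than $P_{\mu_{-\chi}}=P$ as you assert, but containment is all the paper uses, since compatibility of $\rho$ with $P$ already forces the limit along $\mu$ to exist.
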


\begin{proof}
Given a cocharacter $\mu = (\mu_v)_{v \in \mathscr{Q}^{\boldsymbol{D}}_0}$ of $G_{\boldsymbol{\theta}}$, if the limit $\lim_{t\rightarrow 0} \mu(t) \cdot \phi$ exists, then for each $x \in \boldsymbol{D}$, there exists $g_x \in G$ such that $\phi_{c_{x,1}} g^{-1}_x \in L_{\theta_x}$, $g_x \phi_{c_{x,2}} \in P_{-\theta_x}$ and
\begin{align*}
    \mu_{v_x}(t) = g_x \mu_{v_0}(t) g_x^{-1}.
\end{align*}
The argument is exactly the same as Lemma \ref{lem_cochar_equal}. Denote by $P_\mu$ the parabolic subgroup determined by $P_{\mu_{v_0}}$.

First, we assume that the point $\phi$ is $\chi_{\boldsymbol\theta}$-semistable. By Proposition \ref{prop_king2.5}, the condition that $\chi_{\boldsymbol\theta}(\Delta) = \{1\}$ implies that $\rho$ is of degree zero, where $\Delta$ is the kernel of the $G_{\boldsymbol\theta}$-action. Then, we choose a parabolic subgroup $P$ compatible with $\rho$ and pick an arbitrary anti-dominant character $\chi$ of $P$. We have
\begin{align*}
( \boldsymbol\theta, \chi ) = \frac{1}{d} \sum_{x \in \boldsymbol{D}} ( d \theta_x, \chi ) =  \frac{1}{d} \sum_{x \in \boldsymbol{D}} ( -d \theta_x, -\chi ) = \frac{1}{d} \sum_{x \in \boldsymbol{D}} \langle -d \theta_x, -g_x \chi g^{-1}_x \rangle  
= \frac{1}{d} \sum_{x \in \boldsymbol{D}} \langle \mu_{ - g_x \chi g^{-1}_x} , \chi_{- d \theta_x} \rangle.
\end{align*}
The cocharacters $\mu_{- g_x \chi g^{-1}_x}$ and the element $\phi$ give a cocharacter $\mu : \mathbb{G}_m \rightarrow G_{\boldsymbol\theta}$, where 
\begin{align*}
    \mu = (\mu_{v})_{v \in \mathscr{Q}^{\boldsymbol{D}}_0},  \ \ \mu_{v_x} := \mu_{- g_x \chi g^{-1}_x}, \ \ \mu_{v_0} := g^{-1}_x \mu_{v_x} g_x.
\end{align*}
Therefore, we have
\begin{align*}
    ( \boldsymbol\theta, \chi ) = \frac{1}{d} \sum_{x \in \boldsymbol{D}} \langle \mu_{ - g_x \chi g^{-1}_x} , \chi_{- d \theta_x} \rangle = \frac{1}{d} \langle \mu, \chi_{\boldsymbol\theta}  \rangle.
\end{align*}
By Lemma \ref{lem_char_cochar}, we know that $P_\mu \supseteq P$, and then the limit $\lim_{t \rightarrow 0} \mu(t) \cdot \phi$ exists by the compatibility of $\rho$. By Proposition \ref{prop_king2.5}, we know that $\langle \mu, \chi_{\boldsymbol\theta} \rangle \geq 0$, and then, $( \boldsymbol\theta, \chi ) \geq 0$. Thus, $\rho$ is $R$-semistable.

Now we suppose that $\rho$ is $R$-semistable of degree zero. We take a cocharacter $\mu: \mathbb{G}_m \rightarrow G_{\boldsymbol\theta}$ such that the limit $\lim_{t \rightarrow 0} \mu(t) \cdot \phi$ exists. By Proposition \ref{prop_para_and_orbit_G}, we know
\begin{align*}
    \rho(a_i) = \phi_{a_i}, \ \ \rho(b_j) = \phi_{b_j}, \ \ \rho(c_x) = \phi_{c_{x,2}} \phi_{c_{x,1}},
\end{align*}
and limits
\begin{align*}
    \lim_{t \rightarrow 0} \mu_{v_0}(t) \rho(a_i) \mu_{v_0}(t)^{-1}, \ \ \lim_{t \rightarrow 0} \mu_{v_0}(t) \rho(b_j) \mu_{v_0}(t)^{-1}, \ \
    \lim_{t \rightarrow 0} \mu_{v_0}(t) \rho(c_x) \mu_{v_0}(t)^{-1}
\end{align*}
exist. Therefore, $\rho$ is compatible with $P_\mu$. Then,
\begin{align*}
\langle \mu, \chi_{\boldsymbol\theta} \rangle & = \sum_{x \in \boldsymbol{D}} \langle \mu_{v_x} , \chi_{- d \theta_x} \rangle = - d \sum_{x \in \boldsymbol{D}} \langle \theta_x , \chi_{\mu_{v_x}} \rangle  = - d \sum_{x \in \boldsymbol{D}} \langle g^{-1}_x \theta_x g_x, g^{-1}_x \chi_{\mu_{v_x}} g_x \rangle \\ 
& = - d \sum_{x \in \boldsymbol{D}} \langle g^{-1}_x \theta_x g_x, \chi_{\mu_{v_0}} \rangle = - d \sum_{x \in \boldsymbol{D}} ( \theta_x , \chi_{\mu_{v_0}} ) = -d ( \boldsymbol\theta , \chi_{\mu_{v_0}} ) .
\end{align*}
Since $\rho$ is $R$-semistable and $\chi_{\mu_{v_0}}$ is a dominant character of $P_\mu$, we have $( \boldsymbol\theta , \chi_{\mu_{v_0}} ) \leq 0$. Thus, $\langle \mu, \chi_{\boldsymbol\theta} \rangle \geq 0$. This finishes the proof for the semistable case. The argument for the stable case is similar.
\end{proof}

\subsection{Moduli Space of Filtered G-local Systems}

\begin{defn}
Let $\rho$ be a $\boldsymbol\theta$-filtered $G$-local system. A parabolic subgroup $P$ is said to be \emph{admissible} with $\rho$ if $P$ is compatible with $\rho$ and for any character $\chi: P \rightarrow \mathbb{G}_m$ trivial on the center, we have $\deg^{\rm loc} \rho(P,\chi) = 0$.
\end{defn}

Let $P$ be a parabolic subgroup with Levi decomposition $P=LU$. Given a $G$-local system $\rho$, if $P$ is compatible with $\rho$, the restriction of $\rho$ to the Levi subgroup $L$ is also well-defined and we obtain an $L$-local system. Taking the natural inclusion $L \rightarrow G$, the $L$-local system gives a $G$-local system and denote it by $\rho_L$. We give the definition of $S$-equivalence of filtered $G$-local systems analogous to \cite[Definition 3.6]{Ram96a}.

\begin{defn}
Two $R$-semistable $\boldsymbol\theta$-filtered $G$-local systems $\rho$ and $\rho'$ are \emph{$S$-equivalent} if there exist parabolic subgroups $P$ and $P'$ admissible with $\rho$ and $\rho'$ respectively such that the corresponding $G$-local systems $\rho_L$ and $\rho'_{L'}$ are isomorphic.
\end{defn}

\begin{lem}\label{lem_S_equiv_and_GIT_equiv_G}
Let $\phi = (\phi_a)_{a \in \mathscr{Q}^{\boldsymbol{D}}_1}$ and $\phi' = (\phi'_a)_{a \in \mathscr{Q}^{\boldsymbol{D}}_1}$ be two $\chi_{\boldsymbol\theta}$-semistable points in $\mathscr{R}(\mathscr{Q}^{\boldsymbol{D}},\mathscr{I}^{\boldsymbol{D}},\boldsymbol\theta)$. Let $\rho$ and $\rho'$ be the corresponding $R$-semistable $\boldsymbol\theta$-filtered $G$-local systems respectively. The filtered local systems $\rho$ and $\rho'$ are $S$-equivalent if and only if $\phi \sim \phi'$, i.e. they are GIT equivalent.
\end{lem}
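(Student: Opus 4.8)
The plan is to mirror the proof of Lemma \ref{lem_S_equiv_and_GIT_equiv}, translating the linear algebra of filtrations and associated gradeds into the group-theoretic language of parabolic and Levi reductions. The central observation, which replaces the role of $\lim_{t\to 0}\mu(t)\cdot\phi = (\phi_a|_L)_a$ in the $\mathrm{GL}_n(k)$ case, is the following: if $\mu = (\mu_v)_v$ is a cocharacter of $G_{\boldsymbol\theta}$ for which $\lim_{t\to 0}\mu(t)\cdot\phi$ exists, then (by the analogue of Lemma \ref{lem_cochar_equal} recorded in the proof of Proposition \ref{prop_stab_local_sys_and_quiv_rep_G}) $\mu$ is determined by $\mu_{v_0}$ via $\mu_{v_x}(t) = g_x\mu_{v_0}(t)g_x^{-1}$, the parabolic $P_\mu := P_{\mu_{v_0}}$ is compatible with $\rho$, and the limit point corresponds under Proposition \ref{prop_para_and_orbit_G} to the Levi reduction $\rho_L$, where $L = L_{\mu_{v_0}}$ is the Levi subgroup of $P_\mu$. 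Indeed, each arrow limit equals $\lim_{t\to 0}\mu_{v_0}(t)\rho(\gamma)\mu_{v_0}(t)^{-1}$, which is precisely the image of $\rho(\gamma)\in P_\mu$ under $P_\mu \twoheadrightarrow L \hookrightarrow G$. I will also use throughout the pairing identity established in the proof of Proposition \ref{prop_stab_local_sys_and_quiv_rep_G}, namely $\langle \mu,\chi_{\boldsymbol\theta}\rangle = -d\,\langle\boldsymbol\theta,\chi_{\mu_{v_0}}\rangle$.

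For the forward implication, suppose $\rho$ and $\rho'$ are $S$-equivalent, witnessed by admissible parabolics $P$ (with $\rho$) and $P'$ (with $\rho'$) and an isomorphism $\rho_L \cong \rho'_{L'}$; we may take $P, P'$ minimal admissible, so that the corresponding limit points are $R$-polystable. First I would choose cocharacters $\mu_{v_0}, \mu'_{v_0}$ with $P_{\mu_{v_0}} = P$ and $P_{\mu'_{v_0}} = P'$ and lift them to cocharacters $\mu, \mu'$ of $G_{\boldsymbol\theta}$ as above, so that both limits exist. Since $\chi_{\mu_{v_0}}$ is a character of $P$ (Lemma \ref{lem_char_cochar}) and $P$ is admissible, we get $\langle\boldsymbol\theta,\chi_{\mu_{v_0}}\rangle = 0$; the pairing identity then gives $\langle\mu,\chi_{\boldsymbol\theta}\rangle = 0$, and likewise $\langle\mu',\chi_{\boldsymbol\theta}\rangle = 0$. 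The two limit points correspond to $\rho_L$ and $\rho'_{L'}$, so the isomorphism $\rho_L\cong\rho'_{L'}$ places them in a single $G_{\boldsymbol\theta}$-orbit by Proposition \ref{prop_para_and_orbit_G}, and minimality of $P, P'$ makes this orbit closed in the semistable locus. Proposition \ref{prop_king2.6} then yields $\phi \sim \phi'$.

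For the converse, suppose $\phi \sim \phi'$. By Proposition \ref{prop_king2.6} there are cocharacters $\mu, \mu'$ of $G_{\boldsymbol\theta}$ with $\langle\mu,\chi_{\boldsymbol\theta}\rangle = \langle\mu',\chi_{\boldsymbol\theta}\rangle = 0$ whose limits lie in a common closed $G_{\boldsymbol\theta}$-orbit. Setting $P := P_\mu$, the limit of $\phi$ corresponds to $\rho_L$ with $P$ compatible with $\rho$, and the vanishing gives $\langle\boldsymbol\theta,\chi_{\mu_{v_0}}\rangle = 0$. Here lies the one genuinely new point compared with the $\mathrm{GL}_n(k)$ argument: I must upgrade this single vanishing to the full admissibility of $P$, i.e. $\langle\boldsymbol\theta,\chi\rangle = 0$ for \emph{every} character $\chi$ of $P$. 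I would argue by convexity. Since $P_{\mu_{v_0}} = P$, one has $\langle\mu_{v_0}, r\rangle > 0$ for every root $r$ of $P$ outside $L$, so $\chi_{\mu_{v_0}}$ lies in the interior of the cone of dominant characters of $P$; meanwhile $R$-semistability of $\rho$ forces $\langle\boldsymbol\theta,\chi\rangle \le 0$ for every dominant character $\chi$ of $P$ trivial on the center. A linear functional that is non-positive on this cone and vanishes at an interior point must vanish on the whole cone, hence on all characters of $P$; thus $P$ is admissible with $\rho$, and symmetrically $P' := P_{\mu'}$ is admissible with $\rho'$. Finally, the limits correspond to $\rho_L$ and $\rho'_{L'}$ and lie in the same closed orbit, so $\rho_L \cong \rho'_{L'}$ by Proposition \ref{prop_para_and_orbit_G}, whence $\rho$ and $\rho'$ are $S$-equivalent.

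The main obstacle is precisely this convexity upgrade in the converse, together with the identification of the closed-orbit limits with $R$-polystable Levi reductions; in the $\mathrm{GL}_n(k)$ case the analogous step was achieved by varying the integers $d'_j$ and exploiting positivity, whereas here the correct substitute is the cone argument fed by $R$-semistability. Once these are in place, the remaining steps are a direct group-theoretic transcription of Lemma \ref{lem_S_equiv_and_GIT_equiv}.
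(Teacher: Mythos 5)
Your proposal is correct and takes essentially the same route as the paper, whose proof of this lemma consists precisely of reducing to Lemma \ref{lem_S_equiv_and_GIT_equiv} via Proposition \ref{prop_king2.6} together with the pairing identity $\langle \mu, \chi_{\boldsymbol\theta}\rangle = -d\,\langle \boldsymbol\theta, \chi_{\mu_{v_0}}\rangle$ from Proposition \ref{prop_stab_local_sys_and_quiv_rep_G}. Your convexity step --- decomposing $\chi_{\mu_{v_0}}$ into fundamental weights with positive coefficients and using $R$-semistability on each to upgrade the single vanishing $\langle\boldsymbol\theta,\chi_{\mu_{v_0}}\rangle=0$ to full admissibility of $P_\mu$ --- is exactly the $G$-group analogue of the ``choose the $d'_j$ positive'' argument in Lemma \ref{lem_S_equiv_and_GIT_equiv}, and it makes explicit what the paper compresses into the sentence that the analysis of degree follows from Proposition \ref{prop_stab_local_sys_and_quiv_rep_G}.
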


\begin{proof}
This lemma is an application of Proposition \ref{prop_king2.6}, and the proof is similar to that of Lemma \ref{lem_S_equiv_and_GIT_equiv}. The analysis of degree follows directly from the proof and calculation in Proposition \ref{prop_stab_local_sys_and_quiv_rep_G}. The way we find an appropriate element in $G_{\boldsymbol\theta}$ and prove that $\lim_{t \rightarrow 0} \mu(t) \cdot \phi$ and $\lim_{t \rightarrow 0} \mu'(t) \cdot \phi'$ are in the same $G_{\boldsymbol\theta}$-orbit is exactly the same as Lemma \ref{lem_S_equiv_and_GIT_equiv}.
\end{proof}

We consider the moduli functor
\begin{align*}
\widetilde{\mathcal{M}}_{\rm B}(X_{\boldsymbol{D}},G,\boldsymbol\theta) : ({\rm Sch}/k)^{\rm op} \rightarrow {\rm Set}
\end{align*}
sending each $k$-scheme $S$ to the set of isomorphism classes of $S$-flat families of degree zero $R$-semistable $\boldsymbol\theta$-filtered $G$-local systems on $X_{\boldsymbol{D}}$. With a similar approach as Theorem \ref{thm_tame_Betti_lc}, we have the following results:

\begin{thm}\label{thm_tame_Betti_lc_G}
Define
\begin{align*}
    \mathcal{M}_{\rm B}(X_{\boldsymbol{D}},G,\boldsymbol\theta) := \mathscr{R}(\mathscr{Q}^{\boldsymbol{D}},\mathscr{I}^{\boldsymbol{D}},\boldsymbol{\theta}) /\!\!/ (G_{\boldsymbol{\theta}} , \chi_{\boldsymbol\theta})
\end{align*}
to be the GIT quotient with respect to $\chi_{\boldsymbol\theta}$. Then,
\begin{enumerate}
    \item $\mathcal{M}_{\rm B}(X_{\boldsymbol{D}},G,\boldsymbol\theta)$ is a quasi-projective variety;
    \item $\mathcal{M}_{\rm B}(X_{\boldsymbol{D}},G,\boldsymbol\theta)$ universally co-represents the moduli functor $\widetilde{\mathcal{M}}_{\rm B}(X_{\boldsymbol{D}},G,\boldsymbol\theta)$;
    \item the $k$-points are in one-to-one correspondence with $S$-equivalence classes of degree zero $R$-semistable $\boldsymbol\theta$-filtered $G$-local systems;
    \item there exists an open subset $\mathcal{M}^s_{\rm B}(X_{\boldsymbol{D}},G,\boldsymbol\theta)$, whose $k$-points correspond to isomorphism classes of degree zero $R$-stable $\boldsymbol\theta$-filtered $G$-local systems.
\end{enumerate}
\end{thm}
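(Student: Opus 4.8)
The plan is to deduce all four assertions from the dictionary established in Proposition~\ref{prop_para_and_orbit_G}, Proposition~\ref{prop_stab_local_sys_and_quiv_rep_G}, and Lemma~\ref{lem_S_equiv_and_GIT_equiv_G}, together with the general GIT machinery recalled in \S\ref{subsect_num_crit}, exactly paralleling the proof of Theorem~\ref{thm_tame_Betti_lc}. The representation variety $\mathscr{R}(\mathscr{Q}^{\boldsymbol{D}},\mathscr{I}^{\boldsymbol{D}},\boldsymbol\theta)$ is quasi-projective by Construction~\ref{cons_new_quiv_G}, and the GIT quotient of a quasi-projective variety by a reductive group with respect to a character linearization is again quasi-projective; since $G_{\boldsymbol\theta} = G \times \prod_{x \in \boldsymbol{D}} L_{\theta_x}$ is reductive (a product of $G$ with Levi subgroups), assertion~(1) follows at once.

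For assertion~(3), recall from the discussion after Proposition~\ref{prop_king2.5} that the $k$-points of $\mathscr{R}(\mathscr{Q}^{\boldsymbol{D}},\mathscr{I}^{\boldsymbol{D}},\boldsymbol\theta) /\!\!/ (G_{\boldsymbol\theta},\chi_{\boldsymbol\theta})$ are in bijection with GIT-equivalence classes of $\chi_{\boldsymbol\theta}$-semistable points. By Proposition~\ref{prop_stab_local_sys_and_quiv_rep_G}, these semistable points are precisely those $\phi$ whose associated $\boldsymbol\theta$-filtered $G$-local system is of degree zero and $R$-semistable, and by Lemma~\ref{lem_S_equiv_and_GIT_equiv_G} two such points are GIT-equivalent if and only if the corresponding filtered $G$-local systems are $S$-equivalent; composing these bijections gives~(3). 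Assertion~(4) then follows by restricting to the open, $G_{\boldsymbol\theta}$-invariant stable locus $\mathscr{R}^{s}_{\chi_{\boldsymbol\theta}}$, whose image $\mathcal{M}^s_{\rm B}(X_{\boldsymbol{D}},G,\boldsymbol\theta)$ is open in the quotient. On this locus the $G_{\boldsymbol\theta}$-orbits are closed with stabilizer contained in $\Delta$ up to a finite group (Definition~\ref{defn_stab_num}, Proposition~\ref{prop_king2.5}), so GIT-equivalence classes collapse to single orbits, hence to isomorphism classes of $R$-stable filtered $G$-local systems via Proposition~\ref{prop_para_and_orbit_G} and Proposition~\ref{prop_stab_local_sys_and_quiv_rep_G}.

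The main obstacle is assertion~(2), upgrading the bijection on $k$-points to a universal co-representation statement for arbitrary $S$-flat families. Following Simpson's argument in \cite[\S 6]{Sim94b}, the strategy is to show that every $S$-flat family of degree zero $R$-semistable $\boldsymbol\theta$-filtered $G$-local systems is, étale-locally on $S$, pulled back from the semistable locus $\mathscr{R}^{ss}_{\chi_{\boldsymbol\theta}}$ by a morphism $S \to \mathscr{R}(\mathscr{Q}^{\boldsymbol{D}},\mathscr{I}^{\boldsymbol{D}},\boldsymbol\theta)$ that is canonical up to the $G_{\boldsymbol\theta}$-action. Concretely, one rigidifies the family by choosing a trivialisation of the underlying $G$-data at the base point $v_0$ together with compatible reductions/splittings of the parabolic structures at each puncture $x \in \boldsymbol{D}$, so that the relative monodromy is recorded by a classifying map to the representation variety; the residual ambiguity in these choices is precisely the $G_{\boldsymbol\theta}$-action appearing in Construction~\ref{cons_new_quiv_G}. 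The universal (categorical) quotient property of the GIT quotient then transfers to $\widetilde{\mathcal{M}}_{\rm B}(X_{\boldsymbol{D}},G,\boldsymbol\theta)$, and universality under base change follows as GIT quotients by reductive groups commute with flat base change here. The delicate points, which are the genuine principal-bundle analogues of the vector-bundle facts used in the $\mathrm{GL}_n$ case, are that this rigidification can be performed étale-locally on $S$ and that $R$-semistability is both open and preserved in families, so that the classifying morphism indeed factors through $\mathscr{R}^{ss}_{\chi_{\boldsymbol\theta}}$; these verifications carry the essential content of the theorem, the remaining formal deductions being as in \cite{MFK94,HL97,Sim94a}.
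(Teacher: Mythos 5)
Your proposal is correct and follows essentially the same route as the paper: the paper's proof of Theorem~\ref{thm_tame_Betti_lc_G} simply invokes the same chain --- quasi-projectivity of $\mathscr{R}(\mathscr{Q}^{\boldsymbol{D}},\mathscr{I}^{\boldsymbol{D}},\boldsymbol\theta)$ from Construction~\ref{cons_new_quiv_G}, the orbit correspondence of Proposition~\ref{prop_para_and_orbit_G}, the stability equivalence of Proposition~\ref{prop_stab_local_sys_and_quiv_rep_G}, the identification of $S$-equivalence with GIT equivalence in Lemma~\ref{lem_S_equiv_and_GIT_equiv_G}, and then the standard GIT machinery together with Simpson's argument in \cite[\S 6]{Sim94b} for the co-representability statement, exactly as in Theorem~\ref{thm_tame_Betti_lc}. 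Your expanded sketch of the rigidification step for assertion~(2) is a reasonable unpacking of what the citation to Simpson covers, not a departure from the paper's method.
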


\section{Betti Moduli Spaces in tame NAHC for Principal Bundles}\label{sect_Betti_moduli}
In the tame NAHC for principal bundles on noncompact curves, we fix the data of monodromies/residues at punctures in order to give a precise description of the one-to-one correspondence. Keeping this background in mind, in this section, we give the construction of the Betti moduli spaces $\mathcal{M}_{\rm B}(X_{\boldsymbol{D}},G,\boldsymbol\gamma, M_{\boldsymbol\gamma})$ in the tame NAHC for principal bundles on noncompact curves (Theorem \ref{thm_Betti_moduli}), where $\boldsymbol\gamma$ is a collection of weights and $M_{\boldsymbol\gamma}$ is a collection of given monodromy data. We first briefly review the statement of the tame NAHC for principal bundles, and we refer the reader to \cite{HKSZ22} for more details. Let $\alpha,\beta,\gamma$ (resp. $\boldsymbol\alpha,\boldsymbol\beta,\boldsymbol\gamma$) be weights (resp. collections of weights). Let $\mathcal{G}_{\boldsymbol\alpha}$ and $\mathcal{G}_{\boldsymbol\beta}$ be parahoric group schemes (see \cite{KSZ21,HKSZ22} for instance). With respect to the relation in the following table,
\begin{table}[H]
\begin{tabular}{|c|c|c|c|}
\hline
& Dolbeault & de Rham & Betti \\
\hline
weights & $\alpha$ & $\beta=\alpha-(s_\alpha+\bar{s}_\alpha)$ & $\gamma=-(s_\alpha+\bar{s}_\alpha)$ \\
\hline
residues $\backslash$ monodromies & $\varphi_\alpha=s_\alpha+Y_\alpha$ & $\nabla_\beta$ & $M_\gamma$\\
\hline
\end{tabular}
\end{table}
\noindent where $\varphi_{\alpha} = s_{\alpha} + Y_{\alpha}$ is the Jordan decomposition with semisimple part $s_{\alpha}$ and nilpotent part $Y_{\alpha}$ with $(X_{\alpha}, H_{\alpha}, Y_{\alpha})$ a Kostant--Rallis triple, and
\begin{align*}
& \nabla_\beta =\alpha+(s_\alpha-\bar{s}_\alpha)-( H_\alpha + X_\alpha - Y_\alpha),\\
& M_\gamma = \exp\left( - 2 \pi i ( \alpha + s_\alpha - \bar{s}_\alpha) \right)\exp \left( 2 \pi i( H_\alpha + X_\alpha - Y_\alpha ) \right),
\end{align*}
we define the following categories:
\begin{itemize}
\item $\mathcal{C}^{s}_{\rm Dol}(X,\mathcal{G}_{\boldsymbol \alpha},\varphi_{\boldsymbol\alpha})$: the category of degree zero $R$-stable logahoric $\mathcal{G}_{\boldsymbol\alpha}$-Higgs torsors on $X$, and the Levi factors of the residues of the Higgs fields are $\varphi_{\boldsymbol\alpha}$ at punctures;

\item $\mathcal{C}^{s}_{\rm dR}(X,\mathcal{G}_{\boldsymbol\beta},\nabla_{\boldsymbol\beta})$: the category of degree zero $R$-stable logahoric $\mathcal{G}_{\boldsymbol\beta}$-connections on $X$ and the Levi factors of the residues of the connections are $\nabla_{\boldsymbol\beta}$ at punctures;

\item $\mathcal{C}^{s}_{\rm B}(X_{\boldsymbol D}, G,\boldsymbol\gamma, M_{\boldsymbol\gamma})$: the category of degree zero $R$-stable $\boldsymbol\gamma$-filtered $G$-local systems, of which the Levi factors of the monodromies around punctures are $M_{\boldsymbol\gamma}$.
\end{itemize}
The tame NAHC for principal bundles on noncompact curves is given as follows:
\begin{thm}[Theorem 1.1 in \cite{HKSZ22}]\label{thm_HKSZ1.1}
The following categories are equivalent
\begin{align*}
\mathcal{C}^{s}_{\rm Dol}(X,\mathcal{G}_{\boldsymbol\alpha},\varphi_{\boldsymbol\alpha}) \cong \mathcal{C}^{s}_{\rm dR}(X,\mathcal{G}_{\boldsymbol\beta},\nabla_{\boldsymbol\beta}) \cong \mathcal{C}^{s}_{\rm B}(X_{\boldsymbol D}, G,\boldsymbol\gamma, M_{\boldsymbol\gamma}).
\end{align*}
\end{thm}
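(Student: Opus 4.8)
The plan is to establish the two equivalences separately---the Dolbeault--de Rham correspondence and the de Rham--Betti correspondence---whose composite yields the Dolbeault--Betti correspondence. Throughout, the three moduli problems share the same underlying parahoric datum on $X$, and the content of the theorem is twofold: the single stability notion ($R$-stability) must match across all three sides, and the local invariants at each puncture $x \in \boldsymbol{D}$ must transform exactly as in the table. I would organize the argument around one analytic object, the tame harmonic metric, from which both the flat connection and the Higgs field are read off.

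For the Dolbeault--de Rham equivalence, I would prove a Kobayashi--Hitchin correspondence for logahoric $\mathcal{G}_{\boldsymbol\alpha}$-Higgs torsors. Given a degree zero $R$-stable logahoric Higgs torsor, I would first pass to an adapted Higgs-bundle description, modeling the parahoric level structure locally by a weighted filtration, and then invoke the existence of a tame (acceptable) harmonic metric with prescribed growth at the punctures, extending Simpson \cite{Sim90} and Biquard--Boalch \cite{BB04} to the parahoric setting. The harmonic metric produces a flat logahoric $\mathcal{G}_{\boldsymbol\beta}$-connection; the sharp asymptotics of the metric near each puncture, controlled by the Kostant--Rallis triple $(X_\alpha,H_\alpha,Y_\alpha)$, force the weight shift $\beta=\alpha-(s_\alpha+\bar{s}_\alpha)$ and the residue formula $\nabla_\beta=\alpha+(s_\alpha-\bar{s}_\alpha)-(H_\alpha+X_\alpha-Y_\alpha)$. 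Uniqueness of the harmonic metric then furnishes the inverse functor and the preservation of $R$-stability in both directions.

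For the de Rham--Betti equivalence, I would apply the Riemann--Hilbert correspondence to the flat logahoric connection. Since the connection has regular singularities, its monodromy around each puncture is $M_\gamma=\exp\!\left(-2\pi i(\alpha+s_\alpha-\bar{s}_\alpha)\right)\exp\!\left(2\pi i(H_\alpha+X_\alpha-Y_\alpha)\right)$, while the real part of the residue determines the weight $\gamma=-(s_\alpha+\bar{s}_\alpha)$, producing a $\boldsymbol\gamma$-filtered $G$-local system with Levi factors of the monodromies equal to $M_{\boldsymbol\gamma}$. The key step here is to show that a parabolic reduction compatible with the connection corresponds, under monodromy, to a parabolic subgroup compatible with the representation in the sense of Definition \ref{defn_stab_cond_G}, and that the parahoric degree of the reduction agrees with $\deg^{\rm loc}\rho(P,\chi)$; this identification makes $R$-stability of the connection equivalent to $R$-stability of the filtered $G$-local system.

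The main obstacle is the analytic heart of the Dolbeault--de Rham step: establishing existence, uniqueness, and sufficiently sharp asymptotic control of the tame harmonic metric for parahoric torsors, so that the Levi factors of the residues and monodromies are pinned down exactly as in the table rather than merely up to conjugacy. A secondary difficulty is the bookkeeping of the fractional weights through the three normalizations while keeping the parabolic reductions aligned, so that $R$-stability is genuinely preserved at every step; this is precisely where the explicit formulas for $\beta$, $\gamma$, $\nabla_\beta$, and $M_\gamma$ carry the weight of the argument, and matching them with the construction of \cite{HKSZ22} completes the chain of equivalences.
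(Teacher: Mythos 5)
This statement is not proved in the present paper at all: it is quoted verbatim as Theorem 1.1 of \cite{HKSZ22}, so the paper's only ``proof'' is the citation. Your outline---a Kobayashi--Hitchin correspondence via tame harmonic metrics (extending \cite{Sim90,BB04} to the parahoric setting) for the Dolbeault--de Rham step, then a Riemann--Hilbert correspondence with the weight/residue/monodromy table and matching of $R$-stability through compatible parabolic reductions for the de Rham--Betti step---is essentially the strategy of the cited proof in \cite{HKSZ22}, though as written it remains a plan whose analytic core (existence, uniqueness, and sharp asymptotics of the harmonic metric for parahoric torsors) is named rather than executed.
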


From the statement of the theorem, it is clear that we have to fix the data of residues and monodromies in order to get a one-to-one correspondence among these three objects. This is also true in the classical case \cite[\S 5]{Sim90}, which is hidden behind the statement of the main theorem \cite[Theorem, p. 718]{Sim90}. Now we will construct the Betti moduli spaces $\mathcal{M}^{s}_{\rm B}(X_{\boldsymbol D}, G,\boldsymbol\gamma, M_{\boldsymbol\gamma})$ with given data $M_{\boldsymbol\gamma}$ of monodromies, which co-represents the moduli functor
\begin{align*}
\widetilde{\mathcal{M}}^{s}_{\rm B}(X_{\boldsymbol D}, G, \boldsymbol \gamma, M_{\boldsymbol \gamma}): ({\rm Sch}/\mathbb{C})^{\rm op} \rightarrow {\rm Sets},
\end{align*}
sending each $\mathbb{C}$-scheme $S$ to the set of isomorphism classes of $S$-flat families of degree zero $R$-stable $\boldsymbol\gamma$-filtered $G$-local systems on  $X_{\boldsymbol{D}}$ with $M_{\boldsymbol\gamma}$ being the Levi factors of monodromies. 

We first review the construction of the locally closed subset 
\begin{align*}
    \mathscr{R}(\mathscr{Q}^{\boldsymbol{D}},\mathscr{I}^{\boldsymbol{D}},\boldsymbol{\gamma}) \subseteq \mathscr{R}(\mathscr{Q}^{\boldsymbol{D}},\mathscr{I}^{\boldsymbol{D}},G)
\end{align*}
given in Construction \ref{cons_new_quiv_G}. We define a $(\prod_{x \in \boldsymbol{D}} G_x)$-action
\begin{align*}
    (\prod_{x \in \boldsymbol{D}} G_x) \times \mathscr{R}(\mathscr{Q}^{\boldsymbol{D}},G) & \rightarrow \mathscr{R}(\mathscr{Q}^{\boldsymbol{D}},G)
\end{align*}
as follows
\begin{align*}
    (g_x) \cdot (\phi_{a_i}, \phi_{b_j}, \phi_{c_{x,1}}, \phi_{c_{x,2}}) := (\phi_{a_i} , \phi_{b_j} ,  \phi_{c_{x,1}} g_x^{-1}, g_x 
 \phi_{c_{x,2}} ),
\end{align*}
where $G_x = G$ for every $x \in \boldsymbol{D}$. Denote by $\boldsymbol{P} = \{P_{\gamma_x}, x \in \boldsymbol{D}\}$ the collection of parabolic subgroups. Define a closed subvariety $\mathscr{R}(\mathscr{Q}^{\boldsymbol{D}}, \boldsymbol{P}) \subseteq \mathscr{R}(\mathscr{Q}^{\boldsymbol{D}},G)$, whose points $\phi= (\phi_a)_{a \in \mathscr{Q}^{\boldsymbol{D}}_1}$ satisfy the condition
\begin{align*}
    \phi_{c_{x,1}} \in L_{\gamma_x}, \quad \phi_{c_{x,2}} \in P_{-\gamma_x}
\end{align*}
for each $x \in \boldsymbol{D}$. Denote by $\mathscr{R}'(\mathscr{Q}^{\boldsymbol{D}},\boldsymbol{P}) \subseteq (\prod_{x \in \boldsymbol{D}} G_x) \times \mathscr{R}(\mathscr{Q}^{\boldsymbol{D}},G)$ the fiber product 
\begin{center}
\begin{tikzcd}
\mathscr{R}'(\mathscr{Q}^{\boldsymbol{D}},\boldsymbol{P}) \arrow[r, dotted] \arrow[d,dotted] & \mathscr{R}(\mathscr{Q}^{\boldsymbol{D}},\boldsymbol{P}) \arrow[d] \\
(\prod_{x \in \boldsymbol{D}} G_x) \times \mathscr{R}(\mathscr{Q}^{\boldsymbol{D}},G) \arrow[r] & \mathscr{R}(\mathscr{Q}^{\boldsymbol{D}},G)
\end{tikzcd}
\end{center}
Then, we define
\begin{align*}
\mathscr{R}(\mathscr{Q}^{\boldsymbol{D}},\boldsymbol{\gamma}) := \mathscr{R}'(\mathscr{Q}^{\boldsymbol{D}},\boldsymbol{P})|_{\mathscr{R}(\mathscr{Q}^{\boldsymbol{D}},G)}.
\end{align*}
Adding the relation $\mathscr{I}^{\boldsymbol{D}}$, we obtain a closed subvariety $\mathscr{R}(\mathscr{Q}^{\boldsymbol{D}},\mathscr{I}^{\boldsymbol{D}},\boldsymbol{\gamma}) \subseteq \mathscr{R}(\mathscr{Q}^{\boldsymbol{D}},\boldsymbol{\gamma})$, which is quasi-projective. 

Note that there is a natural map 
\begin{align*}
    \mathscr{R}(\mathscr{Q}^{\boldsymbol{D}},\boldsymbol{P})\rightarrow \prod_{x \in \boldsymbol{D}} P_{ - \gamma_x}, \quad 
    (\phi_a)_{a \in \mathscr{Q}^{\boldsymbol{D}}_1} \mapsto (\phi_{c_{x,2}} \phi_{c_{x,1}})_{x \in \boldsymbol{D}}.
\end{align*}
Thus we obtain the following composition
\begin{align*}
\mathscr{R}'(\mathscr{Q}^{\boldsymbol{D}},\boldsymbol{P}) \rightarrow \mathscr{R}(\mathscr{Q}^{\boldsymbol{D}},\boldsymbol{P}) \rightarrow \prod_{x \in \boldsymbol{D}} P_{ - \gamma_x} \rightarrow \prod_{x \in \boldsymbol{D}} L_{\gamma_x}.
\end{align*}
Fixing an element $M_{\boldsymbol{\gamma}} \in \prod_{x \in \boldsymbol{D}} L_{\gamma_x}$, denote by $\mathscr{R}'(\mathscr{Q}^{\boldsymbol{D}},\boldsymbol{P} , M_{\boldsymbol\gamma}) \subseteq \mathscr{R}'(\mathscr{Q}^{\boldsymbol{D}},\boldsymbol{P})$ the preimage, and then take the restriction
\begin{align*}
    \mathscr{R}(\mathscr{Q}^{\boldsymbol{D}},\boldsymbol{\gamma}, M_{\boldsymbol\gamma}) := \mathscr{R}'(\mathscr{Q}^{\boldsymbol{D}},\boldsymbol{P} , M_{\boldsymbol\gamma})|_{\mathscr{R}(\mathscr{Q}^{\boldsymbol{D}},G)},
\end{align*}
Adding the relation $\mathscr{I}^{\boldsymbol{D}}$, we get a quasi-projective variety $\mathscr{R}(\mathscr{Q}^{\boldsymbol{D}},\mathscr{I}^{\boldsymbol{D}},\boldsymbol{\gamma}, M_{\boldsymbol\gamma})$. Finally, we define the moduli space 
\begin{align*}
    \mathcal{M}_{\rm B}(X_{\boldsymbol D}, G,\boldsymbol\gamma, M_{\boldsymbol\gamma}) := \mathscr{R}(\mathscr{Q}^{\boldsymbol{D}},\mathscr{I}^{\boldsymbol{D}},\boldsymbol{\gamma}, M_{\boldsymbol\gamma})  /\!\!/ (G_{\boldsymbol\gamma}, \chi_{\boldsymbol\gamma}).
\end{align*}
Following the approach in \S\ref{sect_fgls}, this is the Betti moduli space expected.
\begin{thm}\label{thm_Betti_moduli}
There exists a quasi-projective variety $\mathcal{M}_{\rm B}(X_{\boldsymbol{D}},G,\boldsymbol\gamma, M_{\boldsymbol\gamma})$ as the moduli space of  degree zero $R$-semistable $\boldsymbol\gamma$-filtered $G$-local systems such that the Levi factors of monodromies around punctures are $M_{\boldsymbol\gamma}$. There exists an open subset $\mathcal{M}^s_{\rm B}(X_{\boldsymbol{D}},G,\boldsymbol\gamma, M_{\boldsymbol\gamma})$, whose points correspond to isomorphism classes of $R$-stable objects in $\mathcal{M}_{\rm B}(X_{\boldsymbol{D}},G,\boldsymbol\gamma, M_{\boldsymbol\gamma})$.
\end{thm}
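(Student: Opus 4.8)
The plan is to run the proof of Theorem \ref{thm_tame_Betti_lc_G} essentially verbatim, now for the $G_{\boldsymbol\gamma}$-action on the monodromy-constrained variety $\mathscr{R}(\mathscr{Q}^{\boldsymbol{D}},\mathscr{I}^{\boldsymbol{D}},\boldsymbol{\gamma}, M_{\boldsymbol\gamma})$ equipped with the linearization $\chi_{\boldsymbol\gamma}$. The construction preceding the statement already exhibits this as a quasi-projective variety, so the first task is to check that it is invariant under $G_{\boldsymbol\gamma}=G\times\prod_{x\in\boldsymbol{D}}L_{\gamma_x}$, and then to transport the three structural results of \S\ref{sect_fgls} by restriction.

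First I would record how the monodromy and its Levi factor transform. For a point $\phi$ and $(g_v)\in G_{\boldsymbol\gamma}$, the monodromy $\phi_{c_x}=\phi_{c_{x,2}}\phi_{c_{x,1}}$ satisfies $\phi'_{c_x}=(g_{v_0}\phi_{c_{x,2}}g_{v_x}^{-1})(g_{v_x}\phi_{c_{x,1}}g_{v_0}^{-1})=g_{v_0}\phi_{c_x}g_{v_0}^{-1}$, so it is literally fixed by the $\prod_x L_{\gamma_x}$-part and merely conjugated by $g_{v_0}$. If $g_x$ conjugates $\phi_{c_x}$ into $P_{-\gamma_x}$ with Levi factor $M_{\gamma_x}$, then $g_xg_{v_0}^{-1}$ does the same for the transformed monodromy, because $(g_xg_{v_0}^{-1})(g_{v_0}\phi_{c_x}g_{v_0}^{-1})(g_xg_{v_0}^{-1})^{-1}=g_x\phi_{c_x}g_x^{-1}$ is the very same element of $P_{-\gamma_x}$. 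Hence the Levi-factor condition cutting out $\mathscr{R}(\mathscr{Q}^{\boldsymbol{D}},\mathscr{I}^{\boldsymbol{D}},\boldsymbol{\gamma}, M_{\boldsymbol\gamma})$ is preserved and the $G_{\boldsymbol\gamma}$-action restricts to it.

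Next I would transport the structural results. Proposition \ref{prop_para_and_orbit_G} restricts to a bijection between $G_{\boldsymbol\gamma}$-orbits in the constrained variety and isomorphism classes of $\boldsymbol\gamma$-filtered $G$-local systems whose monodromies have Levi factor $M_{\boldsymbol\gamma}$: the orbit map sends $\phi$ to the representation with $\rho(c_x)=\phi_{c_x}$, whose Levi factor is exactly the fixed datum. Because the constrained variety is a $G_{\boldsymbol\gamma}$-invariant closed subvariety of $\mathscr{R}(\mathscr{Q}^{\boldsymbol{D}},\mathscr{I}^{\boldsymbol{D}},\boldsymbol\gamma)$, for any of its points a cocharacter $\mu$ of $G_{\boldsymbol\gamma}$ has $\lim_{t\to 0}\mu(t)\cdot\phi$ existing inside the subvariety exactly when it exists in the ambient space; by the numerical criterion (Proposition \ref{prop_king2.5}) the notion of $\chi_{\boldsymbol\gamma}$-(semi)stability is therefore inherited, so Proposition \ref{prop_stab_local_sys_and_quiv_rep_G} and Lemma \ref{lem_S_equiv_and_GIT_equiv_G} restrict to give, respectively, the equivalence of $\chi_{\boldsymbol\gamma}$-(semi)stability with $R$-(semi)stability and the identification of GIT equivalence with $S$-equivalence on the constrained variety.

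With these in place I would invoke geometric invariant theory for the reductive group $G_{\boldsymbol\gamma}$ acting on the quasi-projective variety $\mathscr{R}(\mathscr{Q}^{\boldsymbol{D}},\mathscr{I}^{\boldsymbol{D}},\boldsymbol{\gamma}, M_{\boldsymbol\gamma})$ with linearization $\chi_{\boldsymbol\gamma}$, exactly as in Theorem \ref{thm_tame_Betti_lc_G}: the quotient $\mathcal{M}_{\rm B}(X_{\boldsymbol D}, G,\boldsymbol\gamma, M_{\boldsymbol\gamma})$ is quasi-projective, universally co-represents $\widetilde{\mathcal{M}}^{s}_{\rm B}(X_{\boldsymbol D}, G, \boldsymbol \gamma, M_{\boldsymbol \gamma})$, its $k$-points are the $S$-equivalence classes of degree zero $R$-semistable $\boldsymbol\gamma$-filtered $G$-local systems with Levi monodromy $M_{\boldsymbol\gamma}$, and the stable locus is the asserted open subvariety $\mathcal{M}^s_{\rm B}$. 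The main obstacle I anticipate is the bookkeeping around the monodromy constraint itself: verifying cleanly that fixing $M_{\boldsymbol\gamma}$ defines a $G_{\boldsymbol\gamma}$-invariant closed (hence quasi-projective) subvariety of $\mathscr{R}(\mathscr{Q}^{\boldsymbol{D}},\mathscr{I}^{\boldsymbol{D}},\boldsymbol\gamma)$, and ensuring the semistability comparison above is valid; should the constrained locus only be quasi-projective rather than closed, one either passes to its closure or reruns the numerical computation of Proposition \ref{prop_stab_local_sys_and_quiv_rep_G} directly, which goes through unchanged since it is intrinsic to $\phi$ and the associated filtered $G$-local system.
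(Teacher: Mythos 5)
Your proposal is correct and takes essentially the same route as the paper, which offers no separate proof of Theorem \ref{thm_Betti_moduli} beyond the preceding construction of $\mathscr{R}(\mathscr{Q}^{\boldsymbol{D}},\mathscr{I}^{\boldsymbol{D}},\boldsymbol{\gamma}, M_{\boldsymbol\gamma})$ and the instruction to follow \S\ref{sect_fgls}: one restricts Proposition \ref{prop_para_and_orbit_G}, Proposition \ref{prop_stab_local_sys_and_quiv_rep_G} and Lemma \ref{lem_S_equiv_and_GIT_equiv_G} to the monodromy-constrained variety and takes the GIT quotient by $(G_{\boldsymbol\gamma},\chi_{\boldsymbol\gamma})$ as in Theorem \ref{thm_tame_Betti_lc_G}. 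Your explicit verification that the locus with fixed Levi monodromy data $M_{\boldsymbol\gamma}$ is $G_{\boldsymbol\gamma}$-invariant (via $\phi'_{c_x}=g_{v_0}\phi_{c_x}g_{v_0}^{-1}$ and the replacement $g_x\mapsto g_xg_{v_0}^{-1}$) is a detail the paper leaves implicit, and your fallback of rerunning the intrinsic stability computation rather than relying on closedness of the constrained locus is consistent with the paper's argument.
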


As a direct result of the existence of the Betti moduli space, the categorical correspondence of tame NAHC also holds for the corresponding moduli spaces:
\begin{thm}[Theorem 1.2 in \cite{HKSZ22}]\label{thm_HKSZ1.2}
The following topological spaces are homeomorphic
\begin{align*}
\mathcal{M}^{\rm (top)}_{\rm Dol}(X, \mathcal{G}_{\boldsymbol \alpha}, \varphi_{\boldsymbol \alpha}) \cong  \mathcal{M}^{\rm (top)}_{\rm dR}(X, \mathcal{G}_{\boldsymbol \beta}, \nabla_{\boldsymbol \beta}) \cong \mathcal{M}^{\rm (top)}_{\rm B}(X_{\boldsymbol D},G, \boldsymbol \gamma, M_{\boldsymbol \gamma} ).
\end{align*}
\end{thm}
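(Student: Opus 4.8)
The plan is to promote the categorical equivalence of Theorem \ref{thm_HKSZ1.1} to the asserted chain of homeomorphisms of coarse moduli spaces, following the strategy of Simpson \cite{Sim94a,Sim94b} adapted to the parahoric setting of \cite{HKSZ22}. All three moduli spaces already exist: the Betti space by Theorem \ref{thm_Betti_moduli}, and the Dolbeault and de Rham spaces by the parahoric constructions cited in the introduction. Each co-represents its moduli functor, so its points are precisely the $S$-equivalence classes (isomorphism classes on the stable locus) of the corresponding objects. The first step is therefore set-theoretic: Theorem \ref{thm_HKSZ1.1} already furnishes a bijection among these three sets of $S$-equivalence classes, and I would check that it matches the point sets of the three moduli spaces. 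On the stable loci this is immediate since the theorem is an equivalence of the $R$-stable categories; for the semistable case one verifies that the correspondence preserves degree zero, $R$-semistability and $S$-equivalence (compatibility with Jordan--H\"older gradings). This preservation holds because the weight/residue/monodromy dictionary of the table matches the numerical invariant $\langle \boldsymbol\theta, \chi\rangle = \deg^{\rm loc}\rho(P,\chi)$ governing $R$-stability (Definition \ref{defn_stab_cond_G}) uniformly across the three sides.

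Having the bijection on points, the substance is to show it is a homeomorphism, which I would do by exhibiting continuous maps in both directions induced by the correspondences on families. The de Rham--Betti identification is the Riemann--Hilbert correspondence, sending a logahoric $\mathcal{G}_{\boldsymbol\beta}$-connection with residue datum $\nabla_{\boldsymbol\beta}$ to its monodromy representation with Levi factor $M_{\boldsymbol\gamma}$. Since the monodromy of an $S$-flat family of flat connections varies holomorphically in $S$, this induces a morphism of complex-analytic spaces $\mathcal{M}^{\rm (top)}_{\rm dR} \to \mathcal{M}^{\rm (top)}_{\rm B}$; reconstructing the connection from a representation together with the fixed residue/weight data is likewise analytic, so Riemann--Hilbert is an isomorphism of complex-analytic spaces, in particular a homeomorphism. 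The Dolbeault--de Rham identification is the transcendental part, realized through the existence and uniqueness of a tame (parahoric) harmonic metric solving Hitchin's equations \cite{Sim90,HKSZ22}. The harmonic metric depends continuously on the logahoric $\mathcal{G}_{\boldsymbol\alpha}$-Higgs torsor and, dually, on the flat connection; propagating this continuous dependence to the analytic topologies on the moduli spaces, as in \cite{Sim94b}, yields that the bijection between $\mathcal{M}^{\rm (top)}_{\rm Dol}$ and $\mathcal{M}^{\rm (top)}_{\rm dR}$ and its inverse are both continuous.

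The main obstacle is precisely the continuity of the Dolbeault--de Rham map. Unlike Riemann--Hilbert it is neither algebraic nor holomorphic: it is defined by solving the nonlinear self-duality equations in the tame/parahoric regime, so continuity cannot be read off from an algebraic morphism. Establishing it requires the a priori estimates and convergence theory for tame harmonic bundles, concretely, that as the Higgs data vary continuously the harmonic metrics converge (after gauge) so that the associated flat connections vary continuously, together with a properness input of Uhlenbeck type controlling degenerations within a fixed $S$-equivalence class. In the parabolic/parahoric setting these are exactly the analytic results underpinning the categorical equivalence of Theorem \ref{thm_HKSZ1.1}, so I would invoke them directly rather than reprove them. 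Combining the point-set bijection of the first step with the two-sided continuity of the second then gives $\mathcal{M}^{\rm (top)}_{\rm Dol} \cong \mathcal{M}^{\rm (top)}_{\rm dR} \cong \mathcal{M}^{\rm (top)}_{\rm B}$ as asserted.
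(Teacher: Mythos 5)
Your proposal is mathematically reasonable, but it reconstructs a proof that this paper deliberately does not give: Theorem \ref{thm_HKSZ1.2} is quoted verbatim as Theorem 1.2 of \cite{HKSZ22}, and the paper's entire contribution to it is the observation, recorded in the remark immediately following, that the one missing ingredient in \cite{HKSZ22} was a direct construction of the Betti moduli space, now supplied by Theorem \ref{thm_Betti_moduli}. In particular, the de Rham--Betti leg is handled differently from your sketch: \cite{HKSZ22} proves that the de Rham and Betti \emph{moduli functors} are equivalent, so once $\mathcal{M}_{\rm B}(X_{\boldsymbol{D}},G,\boldsymbol\gamma,M_{\boldsymbol\gamma})$ exists and (universally) co-represents its functor, as in Theorems \ref{thm_tame_Betti_lc_G} and \ref{thm_Betti_moduli}, the identification of the two spaces is formal, with no need to re-establish continuity of the Riemann--Hilbert correspondence on families as your second paragraph does. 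The Dolbeault--de Rham homeomorphism is likewise imported wholesale from \cite{HKSZ22} (Simpson's harmonic-metric argument of \cite{Sim94a,Sim94b}, as flagged in the introduction), which is precisely what your third paragraph re-derives; your decision to invoke rather than reprove the analytic inputs is the right one, and on that leg your plan coincides with the source. Two cautions if you carry out your version in full: Theorem \ref{thm_HKSZ1.1} is an equivalence of the $R$-\emph{stable} categories with fixed Levi factors of residues/monodromies, so extending the point-set bijection to $S$-equivalence classes of semistable objects (via polystable representatives and Jordan--H\"older gradings) is a genuine step rather than a footnote, and the bijection must be checked to respect the fixed data $(\varphi_{\boldsymbol\alpha},\nabla_{\boldsymbol\beta},M_{\boldsymbol\gamma})$ of the table, not merely the weights. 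What your route buys is a self-contained argument independent of how \cite{HKSZ22} phrased its functorial comparison; what the paper's route buys is that, given the functor equivalence already proved there, the theorem really is a direct corollary of Theorem \ref{thm_Betti_moduli} with no further analysis.
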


\begin{rem}
    Although the result of Theorem \ref{thm_HKSZ1.2} is given in \cite{HKSZ22}, the authors do not give a direct construction of the Betti moduli spaces in \cite{HKSZ22} and only prove that the de Rham moduli functor is equivalent to the Betti moduli functor.
\end{rem}

%\vspace{2mm}

%\textbf{Conflict of Interest}.
%On behalf of all authors, the corresponding author states that there is no conflict of interest.

%\vspace{2mm}

%\textbf{Data Availability Statement}.
%Since this is an article in the area of algebraic geometry regarding to the construction of Betti moduli spaces, data availability is not applicable to this article as no new data were created or analyzed in this study. 

\bibliographystyle{amsalpha}
\bibliography{ref_Betti}

\bigskip
\noindent\small{\textsc{School of Mathematics, Nanjing University}\\
		 Nanjing 210093, China}\\
\emph{E-mail address}:  \texttt{pfhwangmath@gmail.com}

\bigskip
\noindent\small{\textsc{Department of Mathematics, South China University of Technology}\\
		381 Wushan Rd, Tianhe Qu, Guangzhou, Guangdong, China}\\
\emph{E-mail address}:  \texttt{hsun71275@scut.edu.cn}

\end{document}